\documentclass[a4paper,12pt]{article}

\usepackage{xcolor}

\makeatletter
\newcommand{\globalcolor}[1]{%
  \color{#1}\global\let\default@color\current@color
}
\makeatother


\textwidth 170mm
\textheight 255mm
\topmargin -20mm
\oddsidemargin -7mm
\evensidemargin 0mm

\usepackage{comment}
\usepackage[utf8]{inputenc}
\usepackage[T1]{fontenc}
 \usepackage{bbold}
\usepackage{amssymb}
\usepackage{amsmath, amsthm}
\usepackage{amsfonts}
\usepackage{graphics}
\usepackage{color}
\usepackage{xcolor}
\usepackage{graphicx}
\usepackage{enumerate}
\usepackage{hyperref}
\usepackage[english]{babel}

\usepackage{caption}
\usepackage{subcaption}
\usepackage{tikz}
\usepackage{adjustbox}
\usepackage{pgfplots}
\pgfplotsset{compat=1.16}
\usetikzlibrary{shapes}
\usetikzlibrary{graphs, graphs.standard}
\usetikzlibrary{positioning}
\usetikzlibrary{scopes}
\usetikzlibrary{fit,backgrounds}
\usetikzlibrary{decorations.pathreplacing}
\usetikzlibrary{patterns,arrows.meta}

\tikzstyle{none}=[]
\tikzstyle{base}=[circle, fill=black!12, draw, inner sep=0pt, minimum width=8pt, minimum height=8pt, line width=0.5pt, draw=black]
\tikzstyle{nodeLabel}=[shape=circle, fill=white, minimum width=8pt, minimum height=8pt, inner sep=0pt]
\tikzstyle{dashEdge}=[-, dashed]
\tikzstyle{baseEdge}=[-, draw=black, line width=1pt]
\tikzstyle{wideEdge}=[-, draw=black, line width=2pt]
\tikzstyle{arrow}=[-{Latex[length=2mm]}, draw=black, line width=1pt]

\pgfdeclarelayer{edgelayer}
\pgfdeclarelayer{nodelayer}
\pgfsetlayers{edgelayer,nodelayer,main}

\definecolor{softgreen}{HTML}{2cab27}
\definecolor{sanguine}{HTML}{eb5a21}
\definecolor{softyellow}{HTML}{e6e337}
\definecolor{coolpink}{RGB}{228, 127, 226}
\definecolor{coolgreen}{RGB}{127, 211, 125}
\definecolor{coolbrown}{RGB}{204, 174, 161}
\definecolor{coolorange}{RGB}{248, 185, 126}
\definecolor{coolred}{RGB}{255, 127, 125}
\definecolor{darkcoolred}{RGB}{255, 74, 71}
\definecolor{coolblue}{HTML}{00b4d8}
\definecolor{darkcoolblue}{HTML}{0065d8}

\usepackage{cleveref}

\newtheorem{Proposition}{Proposition}

\newtheorem{thm}{Theorem}

\newtheorem{cl}{Claim}
\newtheorem{Lemma}{Lemma}

\Crefname{thm}{Theorem}{Theorems}

\title{Matroid-reachability-based  decomposition\\ into arborescences}
\author{Florian H\"orsch\\CISPA, St Ingbert, Germany,\\and\\Benjamin Peyrille, Zolt\'an Szigeti\\ Univ. Grenoble Alpes, Grenoble INP, CNRS, Laboratory  G-SCOP }
\begin{document}

\maketitle

\begin{abstract}
	The problem of matroid-reachability-based packing of  arborescences was solved by Kir\'aly. Here we solve the corresponding decomposition problem that turns out to be more complicated. The result is obtained from the solution of the more general problem of matroid-reachability-based $(\ell,\ell')$-limited packing of  arborescences where we are given a lower bound $\ell$ and an upper bound $\ell'$ on the total number of arborescences in the packing. The problem is considered  for branchings and in directed hypergraphs as well.
\end{abstract}

\section{Introduction}

Packing and Covering is an important and well-studied  subject of Combinatorial Optimization.
In graphs, packing problems consist of fitting as many non-overlapping subgraphs of a given type as possible in the input graph, while covering problems aim to cover the whole graph with such subgraphs possibly allowing overlaps. A packing which is also a covering is called a decomposition.
Some of the classic results of the area are about packing trees and packing arborescences. 
Relevant applications include  evacuation problems \cite{japan}, rigidity problems \cite{tay}, \cite{KT} and  robustness problems in networks \cite{Egy}.
While Nash-Williams \cite{NW}, and independently Tutte \cite{Tu}, characterized graphs having a packing of $k$ spanning trees, Edmonds \cite{Egy} characterized digraphs having a packing of $k$ spanning arborescences. Frank noted in \cite{FRdtda}  that the result of Nash-Williams and  Tutte  can be obtained from the result of Edmonds  via an orientation theorem. The  covering problems, covering the edge set of a graph by forests and covering the arc set of a directed graph by branchings, were solved by Nash-Williams \cite{NW2} and by Frank \cite{FRCB}. We mention that it is well-known that the previous corresponding packing and covering problems are equivalent (see Section 10 in \cite{book}). When spanning arborescences do not exist one may instead be interested in packing reachability arborescences. Kamiyama, Katoh, and Takizawa \cite{japan} gave a surprising extension of Edmonds' theorem on packing reachability arborescences. 

To solve a rigidity problem, Katoh and Tanigawa \cite{KT} introduced and solved the problem of matroid-based packing of rooted trees, in which  given a  graph and a matroid on a multiset of its vertices, we want a packing of rooted-trees  such that for every vertex $v$ of the graph, the root-set of the rooted-trees  containing $v$ forms a basis of the matroid.
The corresponding problem in directed graphs, matroid-based packing of arborescences was solved by Durand de Gevigney, Nguyen, and Szigeti \cite{DdGNSz}. We pointed out in \cite{DdGNSz} how the result of Katoh and Tanigawa \cite{KT} can be obtained from its directed counterpart given in \cite{DdGNSz} via an orientation theorem of Frank. Katoh and Tanigawa \cite{KT} also solved the  problem of matroid-based rooted tree decomposition. The  problem of matroid-based decomposition into  arborescences was not considered in \cite{DdGNSz}, we will solve it in this paper.
A common generalization of the results of Kamiyama, Katoh, and Takizawa \cite{japan} and Durand de Gevigney, Nguyen, and Szigeti \cite{DdGNSz} was given by Kir\'aly \cite{cskir}, namely a characterization of the existence of a  matroid-reachability-based packing of arborescences, where instead of the condition having the root-set of the arborescences containing any given vertex be a basis of the matroid, it must be a basis of the restriction of the matroid to the  set of vertices from which that vertex is reachable in the original directed graph. Later the result of Kir\'aly \cite{cskir} was further refined by Gao and Yang \cite{GY}. We will use this refinement to get a TDI description of the polyhedron of the subgraphs that admit a matroid-reachability-based packing of arborescences. This and the strong duality theorem allow us to solve the problem of matroid-reachability-based $(\ell,\ell')$-limited packing of  arborescences where we are given a lower bound $\ell$ and an upper bound $\ell'$ on the total number of arborescences in the packing. This in turn will easily imply the solution of the problem of matroid-reachability-based  decomposition into  arborescences. 

We mention that all these results were extended to hypergraphs, namely packing spanning hypertrees by Frank, Kir\'aly, and Kriesell \cite{fkk}, packing spanning  hyperarborescences by Frank, Kir\'aly, and Kir\'aly \cite{fkiki}, packing reachability hyperarborescences by B\'erczi and Frank \cite{BF2}, matroid-based packing of rooted hypertrees,  matroid-based packing of hyperarborescences, matroid-reachability-based packing of hyperarborescences by Fortier et al. \cite{FKLSzT}. The problems of   matroid-based    decomposition into hyperarborescences and  matroid-reachability-based decomposition into  hyperarborescences will be treated in this paper.

Along the presentation of our results, we will show how they imply the previous results of the field (see \Cref{fig:res}).
While describing  those implications, we will only show the sufficiency  as the necessity can easily be obtained directly.

\tikzstyle{theorem}=[rectangle, fill=black!10, draw, inner sep=6pt, minimum width=8pt, minimum height=8pt, line width=0.5pt, draw=black, align=center]
\begin{figure}[ht]
	\centering
	\resizebox{\textwidth}{!}{
	\begin{tikzpicture}[font=\small]
		\begin{pgfonlayer}{nodelayer}
			\node [style=theorem, line width=2.5pt] (0) at (0, 0) {\Cref{bboboiboreach}\\
	{\footnotesize $\sf M$-reachability-based $(\ell,\ell')$-limited}};

			\node [style=theorem, line width=2.5pt] (1) at (-6.5, 0) {\Cref{bboboiboreach1}\\
	{\footnotesize $\sf M$-reachability-based}\\{\footnotesize  with limited arcs}};

			\node [style=theorem, line width=2.5pt] (2) at (6.5, 0) {\Cref{bboboibo}\\
	{\footnotesize $\sf M$-based $(\ell,\ell')$ limited}};

			\node [style=theorem, line width=2.5pt] (3) at (0, 2.5) {\Cref{vouyfy2}\\
	{\footnotesize Decomposition into $\sf M$-reachability-based}};

			\node [style=theorem, line width=2.5pt] (4) at (6.5, 2.5) {\Cref{vouyfy}\\
	{\footnotesize Decomposition into $\sf M$-based}};

			\node [style=theorem] (5) at (0, -2.5) {\Cref{thmCsaba,thmGY}\\
	Kir\'aly \& Gao, Yang\\
	{\footnotesize(Complete) $\sf M$-reachability-based}};

			\node [style=theorem] (6) at (0, -5) {\Cref{reach1}\\
	Kamiyama, Katoh, Takizawa\\
	{\footnotesize Reachability with root-set}};

			\node [style=theorem] (7) at (6.5, -5) {\Cref{thmddgnsz}\\
	Durand de Gevigney, Nguyen, Szigeti\\
	{\footnotesize Complete $\sf M$-based}};

			\node [style=theorem] (8) at (13, -5) {\Cref{mbpsaori}\\
	Szigeti\\
	{\footnotesize $\sf M$-based}};

			\node [style=theorem] (9) at (6.5, -7.5) {\Cref{edmondsarborescencesmulti}\\
	Edmonds\\
	{\footnotesize Spanning with root-set}};

		\end{pgfonlayer}
		\begin{pgfonlayer}{edgelayer}
			\draw [style=arrow] (1) to (0);
			\draw [style=arrow] (0) to (3);
			\draw [style=arrow] (0) to (2);
			\draw [style=arrow] (2) to (4);
			\draw [style=arrow] (0) to (5);
			\draw [style=arrow] (5) to (6); 
			\draw [style=arrow] (2) to (7);
			\draw [style=arrow] (7) to (9);
			\draw [style=arrow, fill=none] (6) -- (0, -7.5) node {} -- (4.5, -7.5) node {};

			\draw [style=arrow, fill=none] (8) -- (13, -7.5) node {} -- (8.5, -7.5) node {};
			\draw [style=arrow, fill=none] (2) -- (13, 0) node {} -- (13, -4.125) node {};

			\draw [style=arrow, fill=none] (5) -- (4.5, -2.5) node {} -- (4.5, -4.125) node {} ;
		\end{pgfonlayer}
	\end{tikzpicture}}
	\caption{Our results (in bold) on packings of arborescences and their implications.}\label{fig:res}
\end{figure}
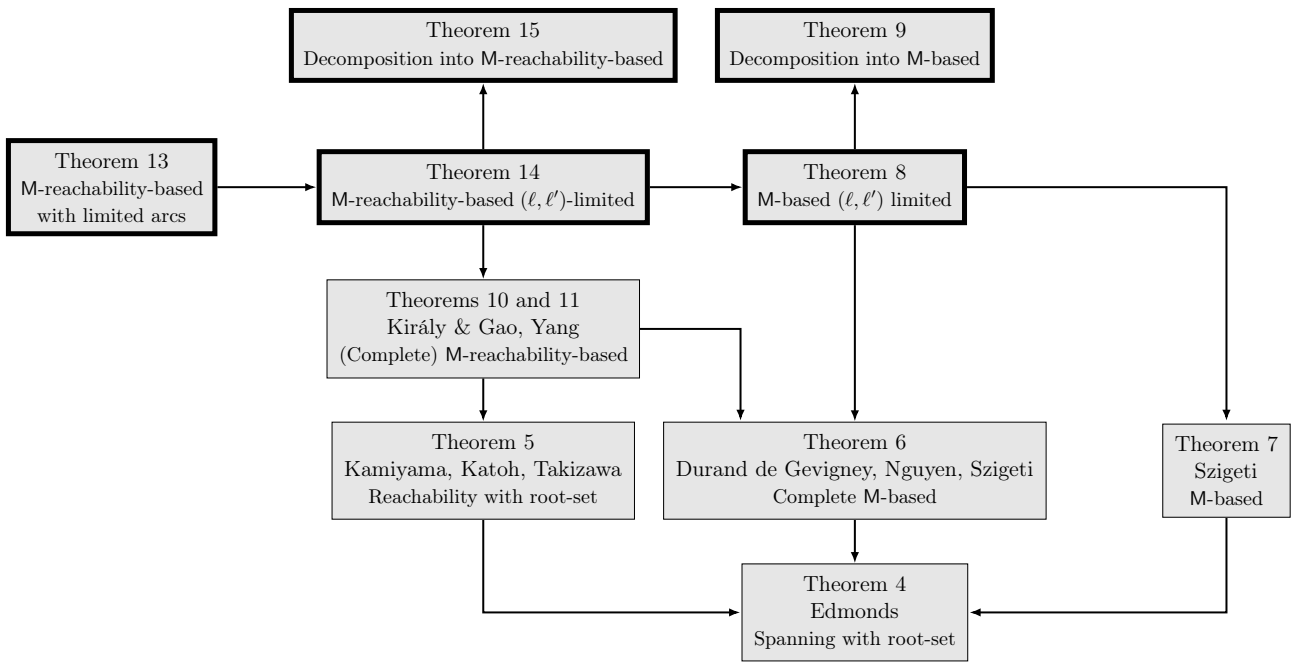












%
%
%
%
%
%

\section{Definitions}

Two subsets of  a set  $V$ are called {\it intersecting} if their intersection is non-empty. A set of mutually disjoint subsets of $V$ is called a {\it subpartition} of $V.$ For a subpartition ${\cal P}$ of $V$, {\boldmath$\cup{\cal P}$} denotes the set of elements of $V$ that belong to some member of ${\cal P}$. For a multiset $S$ of $V$ and a subset $X$ of $V$, {\boldmath$S_X$} denotes the multiset consisting of the elements of $X$ with the same multiplicities as in $S.$ For a family ${\cal S}$ of subsets of $V$ and a subset $X$ of $V$, {\boldmath${\cal S}_X$} denotes the members of ${\cal S}$ that intersect  $X.$ 
\medskip

Let $D=(V,A)$ be a directed graph, shortly {\it digraph}. A subgraph of $D$ that contains all the vertices of $D$ is called a {\it spanning subgraph} of $D.$ By a {\it packing} of subgraphs in $D$, we mean a set of subgraphs that are arc-disjoint. For a  subset $X$ of $V,$ the  {\it in-degree} of $X$, denote by {\boldmath$d^-_A(X)$}, is  the number of arcs entering $X.$ For a subpartition ${\cal P}$ of $V$, we denote by {\boldmath$e_{A}({\cal P})$} the set of arcs in $A$ that enters at least one member of ${\cal P}$. By an {\it atom} of $D$ we mean a strongly-connected component of $D,$  a {\it subatom} is a non-empty subset of an atom of $D.$
\medskip

 A digraph $(U,F)$ is called an {\it $S$-branching} if $S\subseteq U$ and there exists a unique $(S,v)$-path for every $v\in U.$ The vertex set $S$ is called the {\it root set} of the $S$-branching. If $S=\{s\},$ then the $S$-branching is an {\it $s$-arborescence} where the vertex $s$ is called the {\it root} of the $s$-arborescence. A subgraph $(U,F)$ of $D$ is called {\it reachability $s$-arborescence} if it an $s$-arborescence and $U$ is the set of vertices that can be attained from $s$ by a path in $D.$ For a subset  $X$ of $V,$ we denote by {\boldmath$P_X$} or {\boldmath$P^D_X$}  the set of vertices from which there exists a path to  at least one vertex of $X$ in $D.$ For $\ell, \ell'\in\mathbb{Z}_+$, a packing of branchings is {\it $(\ell,\ell')$-limited}  if the total number of the arborescences  in the packing is at least $\ell$ and at most $\ell'.$
\medskip

A set function $r$ on  a set  $V$ is called {\it monotone} if for all $X\subseteq Y\subseteq V,$ we have $r(X)\le r(Y).$
We say that $r$ is {\it subcardinal} if $r(X)\le |X|$ for every $X\subseteq V.$
Set functions $b$ and $p$ on $V$ are called {\it submodular} and {\it supermodular} if for all $X,Y\subseteq V,$   \eqref{submod} and \eqref{supermod} hold, respectively. We say that $b$ and $p$ are  {\it intersecting submodular} and {\it intersecting supermodular} if for all intersecting subsets $X$ and $Y$ of $V,$  \eqref{submod} and \eqref{supermod} hold, respectively.
\begin{eqnarray}
b(X)+b(Y) 		& 	\geq  	& 	b(X\cap Y)+b(X\cup Y),	\label{submod}\\
p(X)+p(Y) 		& 	\leq  		& 	p(X\cap Y)+p(X\cup Y).	\label{supermod}
\end{eqnarray}

Let $S$ be a finite ground set and $r:S\rightarrow \mathbb Z_+$ a non-negative integer valued function on $S$ such that $r(\emptyset)=0,$ $r$ is subcardinal, monotone and submodular. Then {\boldmath${\sf M}$} $=(S,r)$ is called a {\it matroid}. The function $r$ is  the {\it rank function} of the matroid ${\sf M}.$ For a matroid ${\sf M},$ its rank function will be denoted by {\boldmath$r_{\sf M}$}. An {\it independent set} of ${\sf M}$ is a subset $X$ of $S$ such that $r_{\sf M}(X)=|X|.$ The set of independent sets of ${\sf M}$ is denoted by {\boldmath${\cal I}_{\sf M}$}. A maximal independent set of ${\sf M}$ is called a {\it basis} of ${\sf M}$. Every basis of ${\sf M}$ has size $r_{\sf M}(S)$. For a subset $S'$ of $S$, a maximal independent set in $S'$ is called a {\it basis} of $S'.$ We say that two elements $s$ and $s'$ of $S$ are {\it parallel} if $r_{\sf M}(s)=r_{\sf M}(s')=r_{\sf M}(\{s,s'\}).$
The {\it free matroid} on $S$ is the matroid where the only basis is the  ground set $S.$ For a given partition $\mathcal{P}$ of $S$ and a positive integer $a_i$ for every member $X_i$ of $\mathcal{P},$ the {\it partition matroid} ${\sf M}_\mathcal{P}^a$ is the matroid whose rank function is $r_{{\sf M}_\mathcal{P}^a}(S')=\sum_{X_i\in\mathcal{P}}\min\{|S'\cap X_i|, a_i\}$ for every $S'\subseteq S.$
\medskip

In  a directed graph $D=(V,A)$, let $S$ be a multiset of $V$ and ${\sf M}$ a matroid on $S.$ A packing $\mathcal{B}$ of arborescences in $D$  is called {\sf M}-{\it based} or {\it  matroid-based} if every $s\in S$ is the root of at most one arborescence in the packing and for every vertex $v\in V$, the multiset {\boldmath$R^{\mathcal{B}}_v$} of the roots of the arborescences containing $v$ in the packing forms a basis of ${\sf M}.$ A packing $\mathcal{B}$ of arborescences in $D$  is called {\sf M}-{\it reachability-based} or {\it matroid-reachability-based}  if every $s\in S$ is the root of at most one arborescence in the packing and for every vertex $v\in V$, the multiset $R^{\mathcal{B}}_v$ of the roots  of the arborescences containing $v$ in the packing  forms a basis of $S_{P^D_v}$ in ${\sf M}.$ A packing of arborescences is {\it complete} if every $s\in S$ is the root of exactly one arborescence in the packing.
\medskip

A biset {\boldmath${\sf X}$} on a set $V$ is an ordered pair $(X_O,X_I)$ of subsets of $V$ such that $X_I\subseteq X_O.$ 
We call $X_O$ and $X_I$ the {\it outer set} and the {\it inner set} of ${\sf X}$, while {\boldmath$X_W$} $=X_O-X_I$ is called the {\it wall} of ${\sf X}$. We say that an arc $uv$  {\it enters} a biset {\sf X} if $u\in V-X_O$ and $v\in X_I$. The set of arcs in $A$ entering a biset {\sf X} is denoted by {\boldmath$\delta^-_A({\sf X})$} and {\boldmath${\sf d}^-_A({\sf X})$}$=|\delta^-_A({\sf X})|$.
For two bisets ${\sf X}=(X_O,X_I)$ and ${\sf Y}=(Y_O,Y_I),$ the {\it intersection} {\boldmath${\sf X}\cap {\sf Y}$} of ${\sf X}$ and ${\sf Y}$ is the biset $(X_O\cap Y_O,X_I\cap Y_I)$ and the {\it union} {\boldmath${\sf X}\cup {\sf Y}$} of ${\sf X}$ and ${\sf Y}$ is the biset $(X_O\cup Y_O,X_I\cup Y_I).$ 
A function on bisets is called a {\it biset function}. Biset function ${\sf p}$ is called {\it positively intersecting supermodular} if  \eqref{supermodbiset} holds for all  bisets {\sf X} and {\sf Y} with   ${\sf p}({\sf X}), {\sf p}({\sf Y})>0$ and $X_I\cap Y_I\neq\emptyset$.
\begin{eqnarray}
{\sf p}({\sf X})+{\sf p}({\sf Y}) 	& 	\leq  	& 	{\sf p}({\sf X}\cap {\sf Y})+{\sf p}({\sf X}\cup {\sf Y}).	\label{supermodbiset}
\end{eqnarray}

In  a directed graph $D=(V,A)$, we call a vertex set $Z$ a {\it petal} if there exists an atom $C$ of $D$ such that $Z\cap C\neq\emptyset, Z\subseteq P_C$ and $d_A^-(Z-C)=0.$ Note that if $Z$ is a petal, then the atom $C_Z$ is uniquely defined and $P_{C_Z}=P_Z.$  The {\it core} of a petal $Z$ is $C_Z\cap Z$. Let {\boldmath$\hat{\mathcal{Z}}_D$}, shortly {\boldmath$\hat{\mathcal{Z}}$}, be the set of petals in $D.$ We define the biset {\boldmath${\sf X}_Z$}  $=(Z,C_Z\cap Z)$ for every petal $Z$ and we call it a {\it petal biset}. Let {\boldmath$\hat{\mathcal{Z}}_{\sf b}$} be the set of petal bisets, that is $\hat{\mathcal{Z}}_{\sf b}=\{{\sf X}_Z: Z\in\hat{\mathcal{Z}}\}$. Note that for every ${\sf X}\in\hat{\mathcal{Z}}_{\sf b}$, $X_I$ is the core of the petal $X_O.$ More generally, let {\boldmath${\mathcal{X}}$} be the set of bisets {\sf X} on $V$, called {\it generalized petal bisets}, such that $X_O$ is a petal and $X_I$ is a non-empty {\it subset} of the core of the petal $X_O.$ Two petals $Z$ and $Z'$ are called {\it core-intersecting} if their cores intersect  (see \Cref{fig:petal-core-int}). Note that two petals may intersect without being core-intersecting (see \Cref{fig:petal-int-non-core-int}).
We say that a set  $\mathcal{Z}$ of petals is {\it core-laminar} if for all core-intersecting  $Z,Z'\in\mathcal{Z}$, we have $Z\subseteq Z'$ or $Z'\subseteq Z.$ More generally, bisets ${\sf X}^1, {\sf X}^2\in{\mathcal{X}}$ are called {\it core-intersecting} if their petals $X^1_O$ and $X^2_O$ are core-intersecting. 
We say that  $\mathcal{P}\subseteq{\mathcal{X}}$  is {\it OW-laminar} if for all core-intersecting ${\sf X}^1,{\sf X}^2\in\mathcal{P}$, we have $X^1_O\subseteq X^2_W$ or $X^2_O\subseteq X^1_W.$ Note that a biset on an atom is an element {\sf X} of $\mathcal{X}$ for which the petal $X_O$ of {\sf X} coincide with the core of $X_O.$

\medskip

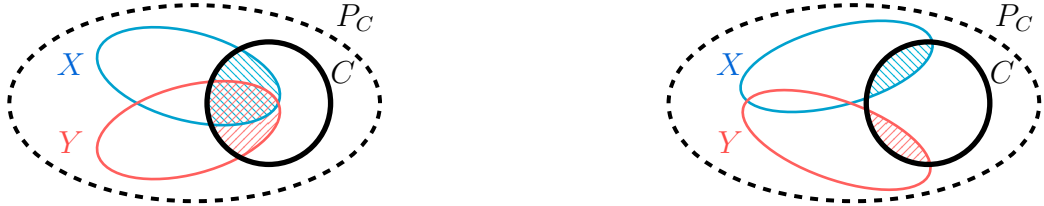
\begin{figure}[h]
	\centering
	\begin{subfigure}[b]{0.38\textwidth}
		\centering
		\begin{tikzpicture}[scale=0.65]
				\begin{scope}
					\clip (1.5,0) circle [radius=1.25];
					\pattern [pattern={north west lines},pattern color=coolblue!100,rotate around={-15:(-0.125,0.55)}] (-0.125,0.55) ellipse (1.9cm and 0.9cm);
				\end{scope}
				\begin{scope}
					\clip (1.5,0) circle [radius=1.25];
					\pattern [pattern={north east lines},pattern color=coolred!100,rotate around={15:(-0.125,-0.55)}] (-0.125,-0.55) ellipse (1.9cm and 0.9cm);
				\end{scope}

				\draw [draw=coolblue!125, line width=1pt, rotate around={-15:(-0.125,0.55)}] (-0.125,0.55) ellipse (1.9cm and 0.9cm);
				\draw [draw=coolred!125, line width=1pt, rotate around={15:(-0.125,-0.55)}] (-0.125,-0.55) ellipse (1.9cm and 0.9cm);
				\draw [draw=black, line width=2pt] (1.5,0) circle [radius=1.25];

				\draw [line width=1.5pt, dashed] (0,0) ellipse (3.75cm and 2cm);
				\draw [style=none] (3.25,1.75) node {$P_C$};
				\draw [style=none] (3.0,0.625) node {$C$};
				\draw [style=none] (-2.5,0.8) node {\color{darkcoolblue}$X$};
				\draw [style=none] (-2.5,-0.8) node {\color{darkcoolred}$Y$};
		\end{tikzpicture}
		\caption{Two core-intersecting petals}
		\label{fig:petal-core-int}
	\end{subfigure}
	\hfill
	\begin{subfigure}[b]{0.60\textwidth}
		\centering
		\begin{tikzpicture}[scale=0.65]
			\begin{scope}
			\end{scope}
				
				\begin{scope}
					\clip (1.5,0) circle [radius=1.25];
					\pattern [pattern={north west lines},pattern color=coolblue!100,rotate around={15:(-0.35,0.75)}] (-0.35,0.75) ellipse (2cm and 0.8cm);
				\end{scope}
				\begin{scope}
					\clip (1.5,0) circle [radius=1.25];
					\pattern [pattern={north east lines},pattern color=coolred!100,rotate around={-20:(-0.35,-0.75)}] (-0.35,-0.75) ellipse (2cm and 0.8cm);
				\end{scope}

				\draw [draw=coolblue!125, line width=1pt, rotate around={15:(-0.35,0.75)}] (-0.35,0.75) ellipse (2cm and 0.8cm);
				\draw [draw=coolred!125, line width=1pt, rotate around={-20:(-0.35,-0.75)}] (-0.35,-0.75) ellipse (2cm and 0.8cm);
				\draw [draw=black, line width=2pt] (1.5,0) circle [radius=1.25];

				\draw [line width=1.5pt, dashed] (0,0) ellipse (3.75cm and 2cm);
				\draw [style=none] (3.25,1.75) node {$P_C$};
				\draw [style=none] (3.0,0.625) node {$C$};
				\draw [style=none] (-2.5,0.8) node {\color{darkcoolblue}$X$};
				\draw [style=none] (-2.5,-0.8) node {\color{darkcoolred}$Y$};
		\end{tikzpicture}
		\caption{Two intersecting petals that are not core-intersecting}
		\label{fig:petal-int-non-core-int}
	\end{subfigure}
	\caption{Examples of petals. The dashed areas correspond to the cores of the petals.}
\end{figure}




\section{Total dual integrality}

The solution of our problems will rely on the polyhedral description of the subgraphs of a given digraph $D,$ that admit an ${\sf M}$-reachability-based packing of arborescences. We will use a TDI description of the polyhedron in question. To be able to do that we need some properties of TDI systems.
\medskip

A linear system $Ax\le b$ where $A$ is a rational matrix and $b$ a rational vector is called {\it Totally Dual Integral} ({\bf TDI}) if the dual linear program of $\max\{c^Tx: Ax\le b\}$ has an integer-valued optimal solution for every integral vector $c$ for which the dual has a feasible solution. 
\medskip

The seminal result of Edmonds, Giles \cite{TDI} on TDIness is the following.

\begin{thm} [Edmonds, Giles \cite{TDI}, Corollary 22.1b in \cite{lexbook}]\label{EG}
Let $Ax\le b$ be a TDI-system where $A$ is an integral matrix and $b$ is an integral vector. If $\max\{c^Tx: Ax\le b\}$ is finite, then it has an integral optimal solution.
\end{thm}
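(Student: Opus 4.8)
The plan is to deduce the integrality of an optimal \emph{primal} solution from the integrality of the optimal \emph{value} for all integral objective vectors, the latter being an immediate consequence of the TDI hypothesis via linear programming duality.

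First I would record the dual program: the dual of $\max\{c^Tx : Ax\le b\}$ is $\min\{b^Ty : A^Ty=c,\ y\ge 0\}$. I begin by showing that the maximum is an integer for every \emph{integral} objective vector for which it is finite. Fix such an integral objective; since the primal is then feasible and bounded, LP duality gives that the dual is feasible and the two optima coincide. As the dual is feasible and the objective is integral, TDIness provides an integral optimal dual solution $y^*$, and since both $b$ and $y^*$ are integral the common optimal value $b^Ty^*$ is an integer. Hence $\max\{c^Tx : Ax\le b\}$ is an integer for every integral $c$ for which it is finite.

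Second, I would invoke the characterization of integral polyhedra: a rational polyhedron $P=\{x : Ax\le b\}$ for which $\max\{c^Tx : x\in P\}$ is an integer whenever it is finite and $c$ is integral is an \emph{integral polyhedron}, i.e.\ every nonempty face contains an integral point. Once $P$ is known to be integral, the finite maximum for the given objective $c$ of the statement is attained on a face of $P$, which contains an integral point, so the primal has an integral optimal solution. Applying this to our system finishes the proof.

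The heart of the matter, and the step I expect to be the main obstacle, is the polyhedral characterization used above; I would prove its nontrivial direction by reducing to minimal faces. A minimal face $F$ of $P$ is an affine subspace of the form $\{x : A'x=b'\}$, where $A'x\le b'$ is the subsystem of inequalities tight on $F$. Assuming for contradiction that $F$ contains no integral point, the system $A'x=b'$ has no integral solution, so by the integer analogue of Farkas' lemma there is a rational $y$ with $y^TA'$ integral but $y^Tb'\notin\mathbb{Z}$; then the integral vector $c_0=(y^TA')^T$ is constant and non-integral on $F$. Choosing an integral objective $c_1$ whose maximizing face over $P$ is exactly $F$ and forming $Mc_1+c_0$ for a sufficiently large integer $M$, the maximizing face of $Mc_1+c_0$ is still $F$ while the attained value there is non-integral, contradicting the integrality of all optimal values established above. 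The delicate points are guaranteeing that such a selecting objective $c_1$ exists and stays finite-valued on $P$, and that the perturbation by $c_0$ does not move the optimum off $F$ when $P$ is unbounded; both are controlled by working with the recession cone of $P$ and the fact that $P$ has only finitely many faces.
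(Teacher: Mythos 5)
The paper does not prove this statement at all: it is imported verbatim from the literature (it is Corollary~22.1b in Schrijver's book, due to Edmonds and Giles), so there is no in-paper argument to compare against. Your proposal is a correct reconstruction of the standard textbook proof: (i) TDIness plus integrality of $b$ forces the optimal value to be an integer for every integral objective with finite maximum, via an integral optimal dual solution and strong duality; (ii) this is one of the equivalent characterizations of an integral polyhedron, after which the optimal face of the given $c$ contains an integral point. Your treatment of the hard implication in (ii) --- minimal faces, the integer Farkas lemma producing $y$ with $y^TA'$ integral and $y^Tb'\notin\mathbb{Z}$, and a perturbed objective $Mc_1+c_0$ --- works, but the recession-cone bookkeeping you flag as delicate can be avoided entirely by Schrijver's shortcut: since $A'$ and $b'$ are integral, adding a large nonnegative integral vector to $y$ preserves both ``$y^TA'$ integral'' and ``$y^Tb'\notin\mathbb{Z}$'', so one may assume $y\ge 0$; then $c_0=(y^TA')^T$ itself satisfies $\max\{c_0^Tx : x\in P\}=y^Tb'\notin\mathbb{Z}$, attained on $F$, contradicting (i) directly with no auxiliary objective $c_1$ and no limit argument. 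Also note that your step (ii) tacitly uses that a finite supremum over a rational polyhedron is attained; this is standard but worth stating.
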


The TDI description of the polyhedron we are interested in uses bisets. The following result result of Frank \cite{frtdithm} will hence play an important role.

\begin{thm}[Frank, Theorem 5.3 in \cite{frtdithm}]\label{FJTDI}  Let $D = (V, A)$ be a digraph and ${\sf p}$ a positively intersecting supermodular biset function on $V$ such that ${\sf d}_A^-({\sf X}) \ge {\sf p}({\sf X})$  for every biset {\sf X} on $V.$
 Then the following linear system is TDI:
\begin{eqnarray*}
	x(\delta_A^-({\sf X}))&\ge& {\sf p}({\sf X}) \hskip 1truecm \text{ for every biset  {\sf X} on $V$},\\
	\mathbb{1}	\hskip .25truecm 	\ge 	\hskip .25truecm	x &\ge & \mathbb{0}.
\end{eqnarray*}
\end{thm}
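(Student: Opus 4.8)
The plan is to establish total dual integrality by the classical uncrossing-and-total-unimodularity argument. First I set up the program. Since $x=\mathbb{1}$ gives $x(\delta^-_A({\sf X}))={\sf d}^-_A({\sf X})\ge {\sf p}({\sf X})$ for every biset by hypothesis, the primal is feasible, so I fix an integral $c$ for which the dual is feasible and the primal maximum is finite. Assigning multipliers $y_{\sf X}\ge0$ to the biset inequalities and $z_a,w_a\ge0$ to $x\le\mathbb{1}$ and $-x\le\mathbb{0}$, the dual is $\min\{-\sum_{\sf X}{\sf p}({\sf X})\,y_{\sf X}+\sum_a z_a\}$ subject to $z_a-w_a-\sum_{{\sf X}:\,a\in\delta^-_A({\sf X})}y_{\sf X}=c_a$ for every arc $a$ and $y,z,w\ge\mathbb{0}$. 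The aim is to exhibit an integral optimal $(y,z,w)$.

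The core is an uncrossing step. I first note that any biset ${\sf X}$ with ${\sf p}({\sf X})\le0$ gives a constraint implied by $x\ge\mathbb{0}$, so in an optimal solution I may take $y_{\sf X}=0$ there: lowering such a weight to $0$ and re-solving the equalities for $z_a,w_a$ (decreasing $z_a$, raising $w_a$) never increases the objective. Hence I may assume every ${\sf X}$ in the support $\mathcal{F}=\{{\sf X}:y_{\sf X}>0\}$ satisfies ${\sf p}({\sf X})>0$. Among all such optimal solutions I then pick one minimizing the potential $\sum_{\sf X}y_{\sf X}\,(|X_O|^2+|X_I|^2)$, and claim $\mathcal{F}$ is cross-free. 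Indeed, if ${\sf X},{\sf Y}\in\mathcal{F}$ have $X_I\cap Y_I\neq\emptyset$ but are not nested, shift $\varepsilon=\min\{y_{\sf X},y_{\sf Y}\}$ from ${\sf X},{\sf Y}$ onto ${\sf X}\cap{\sf Y}$ and ${\sf X}\cup{\sf Y}$. The arc-wise submodularity of the entering relation, $\mathbb{1}[a\in\delta^-_A({\sf X}\cap{\sf Y})]+\mathbb{1}[a\in\delta^-_A({\sf X}\cup{\sf Y})]\le\mathbb{1}[a\in\delta^-_A({\sf X})]+\mathbb{1}[a\in\delta^-_A({\sf Y})]$, shows the coverage of each arc does not increase, so feasibility survives after re-solving for $z,w$; positively intersecting supermodularity \eqref{supermodbiset}, available because ${\sf p}({\sf X}),{\sf p}({\sf Y})>0$ and $X_I\cap Y_I\neq\emptyset$, shows $\sum_{\sf X}{\sf p}({\sf X})y_{\sf X}$ does not drop, so the objective does not increase and optimality is kept. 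Since $|X_O\cap Y_O|^2+|X_O\cup Y_O|^2<|X_O|^2+|Y_O|^2$ for non-nested sets, the potential strictly decreases, a contradiction; so the members of $\mathcal{F}$ pairwise nest or are disjoint, i.e. $\mathcal{F}$ is cross-free.

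It remains to invoke total unimodularity. The arc-versus-biset incidence matrix of a cross-free family is a network matrix and hence totally unimodular, and appending the unit rows arising from $x\le\mathbb{1}$ and $x\ge\mathbb{0}$ preserves total unimodularity. The dual confined to the support $\mathcal{F}$ is thus a linear program whose constraint matrix is the transpose of a totally unimodular matrix and whose right-hand side $c$ and cost coefficients (the positive ${\sf p}$-values, $1$ and $0$) are integral; its optimum is therefore attained at an integral vertex. Extending this vector by zeros outside $\mathcal{F}$ yields an integral optimal solution of the full dual, which is exactly the assertion of TDIness.

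The delicate point I expect is the interaction of the two reductions: an uncrossing of two positive bisets may create a new biset of nonpositive ${\sf p}$-value that must then be pruned, and pruning in turn perturbs arc coverages, so one must argue that alternating the steps does not cycle. I would control this by the same potential $\sum_{\sf X}y_{\sf X}(|X_O|^2+|X_I|^2)$, which strictly decreases under uncrossing and does not increase under pruning, guaranteeing termination at a cross-free, strictly ${\sf p}$-positive family. The second, more structural, fact — that cross-free families of bisets really do give network (hence totally unimodular) matrices — is the only ingredient I would merely cite rather than reprove, as it is the biset analogue of the classical cross-free–set case.
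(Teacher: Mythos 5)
The paper does not prove \Cref{FJTDI} at all: it is quoted from Frank \cite{frtdithm} and used as a black box, so the only in-paper benchmark is the way the same uncrossing machinery is deployed in the proof of \Cref{bboboiboreach1}. Your overall architecture (prune nonpositive members, uncross the dual support to a nested family, finish by total unimodularity) is the right one and is essentially the standard Edmonds--Giles/Frank argument. However, your potential argument is broken as stated. For any two sets $P,Q$ one has $|P\cap Q|^2+|P\cup Q|^2-|P|^2-|Q|^2=2\,|P-Q|\cdot|Q-P|\ \ge\ 0$, with equality exactly when $P,Q$ are nested; so uncrossing strictly \emph{increases} $\sum_{\sf X}y_{\sf X}(|X_O|^2+|X_I|^2)$ for non-nested members, it does not decrease it. Choosing the \emph{minimizer} of this potential therefore yields no contradiction; you must choose the maximizer (this is precisely the convention the paper uses just before \Cref{intoptsolprop}, where the contradiction comes from $|Z_1|^2+|Z_2|^2-|Z_1\cap Z_2|^2-|Z_1\cup Z_2|^2<0$). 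Worse, your proposed resolution of the "delicate point" fails even after flipping the sign: the combined move "uncross, then prune a resulting member with ${\sf p}\le 0$" removes the terms of both ${\sf X}$ and ${\sf Y}$ but adds back only one of the two new terms, and this can strictly decrease the (correctly oriented) potential, so a single monovariant does not control both operations. The standard repair, again the one the paper itself uses, is lexicographic: among optimal dual solutions first minimize $\mathbb{1}^T y$ (pruning strictly decreases it, pure uncrossing preserves it), and among those maximize $\sum_{\sf X}y_{\sf X}(|X_O|^2+|X_I|^2)$ (pure uncrossing strictly increases it). This also settles attainment of the maximum, which is otherwise unclear since the optimal face need not be bounded in the $y$-directions.

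A secondary caution concerns the final step. "The arc-versus-biset incidence matrix of a cross-free family of bisets is a network matrix" is not a citable off-the-shelf fact in the generality you invoke it; the set-family statement does not transfer verbatim to bisets. What your uncrossing actually delivers is the specific structure that any two support members whose \emph{inner} sets intersect are comparable as bisets (both inner and outer sets nested), while members with disjoint inner sets admit no common entering arc because an entering arc must have its head in the inner set. Consequently the inner sets form a laminar forest and each arc enters an initial segment of the chain of members containing its head, and \emph{that} incidence structure is a network matrix (paths directed toward the root of an arborescence), hence totally unimodular; appending the unit rows for $\mathbb{0}\le x\le\mathbb{1}$ preserves this. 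You should argue this explicitly rather than gesture at the set case. With the lexicographic choice and this sharpened total unimodularity step, your proof goes through; as written, the uncrossing termination argument is wrong in sign and insufficient in substance.
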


We also need the following simple observation on TDI systems given by Schrijver in \cite{lexbook}.

\begin{thm}[Schrijver, (41) in \cite{lexbook}]\label{tdiclaim}
If $A_1x\le b_1$ and  $A_2x\le b_2$ define the same polyhedron, and each inequality of  $A_1x\le b_1$ is a non-negative integral combination of inequalities in  $A_2x\le b_2$, then  $A_1x\le b_1$ is TDI implies that  $A_2x\le b_2$ is TDI.
\end{thm}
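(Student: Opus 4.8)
The plan is to turn the hypothesis into an explicit matrix identity and then transport an integral optimal dual solution from the first system to the second. Write $P$ for the common polyhedron, and note that the statement that every inequality of $A_1x\le b_1$ is a non-negative integral combination of inequalities of $A_2x\le b_2$ means exactly that there is a non-negative integral matrix $M$ with $A_1=MA_2$ and $b_1=Mb_2$. To prove that $A_2x\le b_2$ is TDI, I would fix an integral vector $c$ for which the dual of $\max\{c^Tx:A_2x\le b_2\}$, namely $\min\{b_2^Ty_2:A_2^Ty_2=c,\ y_2\ge 0\}$, is feasible, and exhibit an integral optimal solution of it. One may assume $P\neq\emptyset$, since otherwise both systems exhibit the same (vacuous) dual behaviour and there is nothing to separate them.

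The first step is to observe that the dual attached to the $A_1$-system is also feasible and has the same optimal value. This is where the hypothesis that the two systems define the same $P$ enters: the primal problems $\max\{c^Tx:A_1x\le b_1\}$ and $\max\{c^Tx:A_2x\le b_2\}$ are literally one and the same optimization problem over $P$. Since the $A_2$-dual is feasible and $P\neq\emptyset$, strong LP duality gives that this common primal has finite value $v:=\max\{c^Tx:x\in P\}$; applying strong duality again to the $A_1$-system, the dual $\min\{b_1^Ty_1:A_1^Ty_1=c,\ y_1\ge 0\}$ is feasible with optimal value $v$ as well. Because $A_1x\le b_1$ is TDI and $c$ is integral, this $A_1$-dual admits an integral optimal solution $y_1^{*}$.

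The key step is then to lift $y_1^{*}$ to the $A_2$-system by setting $y_2^{*}:=M^Ty_1^{*}$. Using $A_1=MA_2$ and $b_1=Mb_2$, one checks the four required properties in turn: $y_2^{*}$ is integral and non-negative, since $M$ and $y_1^{*}$ are; it is dual-feasible, since $A_2^Ty_2^{*}=(MA_2)^Ty_1^{*}=A_1^Ty_1^{*}=c$; and its objective value is $b_2^Ty_2^{*}=(Mb_2)^Ty_1^{*}=b_1^Ty_1^{*}=v$, the optimum. Hence $y_2^{*}$ is an integral optimal solution of the $A_2$-dual, which is precisely what TDIness of $A_2x\le b_2$ demands.

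I do not anticipate a serious obstacle; the substance is the bookkeeping verifying that $M^Ty_1^{*}$ simultaneously inherits integrality, non-negativity, dual feasibility, and optimality. The single point that needs care is the transfer of feasibility and optimal value between the two duals: it is the equality of the polyhedra, not the combination structure $A_1=MA_2$, that makes the value $v$ intrinsic to the pair $(P,c)$ and thereby guarantees that the lifted $y_2^{*}$ is not merely feasible but optimal. (The combination structure alone only gives $\operatorname{cone}(\text{rows of }A_1)\subseteq\operatorname{cone}(\text{rows of }A_2)$, i.e.\ the less useful direction of dual feasibility.)
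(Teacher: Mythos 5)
The paper does not prove this statement at all --- it is imported verbatim from Schrijver's book as \Cref{tdiclaim} and used as a black box --- so there is no in-paper proof to compare against; your argument is correct and is precisely the standard proof of this fact. Encoding the hypothesis as $A_1=MA_2$, $b_1=Mb_2$ with $M$ non-negative integral, using the equality of the two polyhedra (not the combination structure) to transfer the finite common optimal value $v$ to the $A_1$-dual, invoking TDIness of $A_1x\le b_1$ to get an integral optimal $y_1^{*}$, and pushing it forward to $y_2^{*}=M^{T}y_1^{*}$ is exactly the intended argument, and each of the four verifications (non-negativity, integrality, $A_2^{T}y_2^{*}=c$, $b_2^{T}y_2^{*}=v$) goes through as you state.
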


\section{Packings in directed graphs}

In this section we consider packings of arborescences in digraphs. We present known and new results on packing spanning arborescences, packing reachability arborescences, matroid-based packing of arborescences, and matroid-reachability-based packing of arborescences in different subsections.

\subsection{Packing  arborescences}

Let us start with the seminal result of Edmonds on packing spanning arborescences.

\begin{thm}[Edmonds \cite{Egy}]\label{edmondsarborescencesmulti} 
Let $D=(V,A)$ be a digraph and $S$ a multiset of vertices in $V.$
There exists a packing of spanning $s$-arborescences $(s\in S)$ in $D$ if and only if 
	\begin{eqnarray} \label{edmondscondmulti} 
		|S_X|+ d^-_A(X) &\geq &|S| \hskip .5truecm \text{ for every non-empty $X\subseteq V.$}
	\end{eqnarray}
\end{thm}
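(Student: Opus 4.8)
The condition \eqref{edmondscondmulti} is necessary and this I would verify directly. Fix a nonempty $X\subseteq V$ and suppose a packing of spanning $s$-arborescences ($s\in S$) exists. Every arborescence whose root $s$ lies outside $X$ contains an $(s,v)$-path to each $v\in X$, hence uses at least one arc entering $X$; since the arborescences are arc-disjoint, these entering arcs are pairwise distinct. There are $|S|-|S_X|$ arborescences with root outside $X$, so $d^-_A(X)\ge |S|-|S_X|$, which is exactly \eqref{edmondscondmulti}.

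For sufficiency the plan is to grow all arborescences simultaneously (Lov\'asz's method). I would maintain arc-disjoint $s$-arborescences $F_s$ with vertex sets $U_s$, initially $F_s=(\{s\},\emptyset)$, and let $A'\subseteq A$ be the set of arcs used by no $F_s$. Writing $g(X)=|\{s\in S: U_s\cap X=\emptyset\}|$, I keep the invariant
\[
 d^-_{A'}(X)\ \ge\ g(X)\qquad\text{for every nonempty }X\subseteq V.
\]
At the start $U_s=\{s\}$ gives $g(X)=|S|-|S_X|$ and $A'=A$, so the invariant is precisely \eqref{edmondscondmulti}. At each step, if some $U_j\ne V$, I add to $F_j$ an arc $uv\in A'$ with $u\in U_j$ and $v\notin U_j$, chosen so that the invariant survives; this keeps $F_j$ an $s_j$-arborescence, removes $uv$ from $A'$, and increases $\sum_s|U_s|$. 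The process therefore terminates, and at termination every $U_s=V$, yielding the desired packing of arc-disjoint spanning arborescences.

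The crux, and the step I expect to be hardest, is the \emph{augmenting lemma}: whenever $U_j\ne V$ a suitable arc $uv$ exists. I would prove it by uncrossing. Since $X\mapsto d^-_{A'}(X)$ is submodular and $g$ is supermodular (the indicator of the event $U_s\cap X=\emptyset$ is supermodular in $X$), the slack $f:=d^-_{A'}-g$ is submodular and, by the invariant, nonnegative; hence the \emph{tight} sets where $f=0$ are closed under union and intersection. First, $d^-_{A'}(V\setminus U_j)\ge g(V\setminus U_j)\ge 1$, because $s_j$ is counted, so $A'$ contains an arc leaving $U_j$. A short computation then shows that adding $uv$ can violate the invariant only at a tight set $X$ with $v\in X$, $u\notin X$ and $U_j\cap X\ne\emptyset$; thus it suffices to route the new arc into a vertex that lies outside every tight set meeting $U_j$. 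Using the maximal such tight set $T$ (well defined by the lattice property) together with an in-degree count on a set built around $U_j\cup T$, one argues that an arc from $U_j$ landing outside $T$ must exist. Making this final arc-existence step fully rigorous is the delicate point; the rest is bookkeeping and the termination argument.
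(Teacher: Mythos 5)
The paper never actually proves this theorem: it is cited from Edmonds and only re-derived as a special case of the more general packing theorems (Theorem \ref{reach1}, or Theorem \ref{thmddgnsz} with the free matroid), so your argument has to stand on its own. Your necessity proof is correct, and your sufficiency framework is the standard one (Lov\'asz's proof, in the multi-root form used by Schrijver for the branching version): grow the arborescences simultaneously, keep the invariant $d^-_{A'}(X)\ge g(X)$, and reduce each augmentation to finding an arc $uv$ with $u\in U_j$, $v\notin U_j$ that enters no tight set $X$ with $X\cap U_j\neq\emptyset$. Up to that point everything is fine.

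The augmenting lemma, however, is exactly where your sketch breaks, in two ways. First, ``the maximal tight set meeting $U_j$'' need not exist: tight sets are closed under union only when they intersect (for disjoint tight $X,Y$ the submodular inequality involves $f(\emptyset)=-|S|$ and yields nothing), and two dangerous tight sets can be disjoint. Second, and more fundamentally, it is not always possible to ``route the new arc into a vertex that lies outside every tight set meeting $U_j$'': take $V=\{r,a,b\}$, $S=\{r,r\}$, arcs $ra,rb,ab,ba$; after augmenting $F_1$ with $ra$ we have $U_1=\{r,a\}$, $U_2=\{r\}$, $A'=\{rb,ab,ba\}$, and $\{a,b\}$ is a tight set meeting $U_1$, so the unique remaining vertex $b$ lies in a dangerous set --- yet the augmentation by $ab$ is valid, precisely because the tail $a$ also lies in $\{a,b\}$, so the arc does not \emph{enter} it. The correct argument is oriented the other way: among the dangerous sets pick an inclusionwise minimal one $T$; the count $d^-_{A'}(T-U_j)\ge g(T-U_j)\ge g(T)+1=d^-_{A'}(T)+1$ (every arc entering $T-U_j$ either enters $T$ or has its tail in $T\cap U_j$) produces an arc of $A'$ from $T\cap U_j$ to $T-U_j$, i.e.\ an arc landing \emph{inside} $T$; and if this arc entered another dangerous set $X$, uncrossing $X$ with $T$ (their intersection contains the head $v$, and strict supermodularity of $g$ at the index $s=j$ forces $X\cap T$ to meet $U_j$) would yield a dangerous set strictly contained in $T$, contradicting minimality. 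Without reworking the final step along these lines, the sufficiency proof does not go through.
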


It is worth mentioning that to have  a packing of spanning $s$-arborescences $(s\in S)$ in $D$ it is sufficient that \eqref{edmondscondmulti} holds for every subatom.
\medskip

The following  nice extension of Theorem \ref{edmondsarborescencesmulti} about packing of reachability arborescences was given in \cite{japan}.

\begin{thm}[Kamiyama, Katoh, Takizawa \cite{japan}]\label{reach1} 
Let $D=(V,A)$ be a digraph and $S$ a multiset of vertices in $V.$
There exists a packing of reachability $s$-arborescences $(s\in S)$ in $D$ if and only if 
	\begin{eqnarray} \label{reach1cond} 
		 |S_{X}|+d^-_A(X) &\geq& |S_{P^D_X}| \hskip .5truecm \text{ for every  $X\subseteq V.$}
	\end{eqnarray}
\end{thm}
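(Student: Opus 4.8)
The plan is to establish necessity by a direct counting argument and sufficiency by induction on $|V|$, reducing each step to the strongly connected case, where the statement collapses to Edmonds' theorem (\Cref{edmondsarborescencesmulti}).

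For necessity, suppose such a packing exists and fix $X\subseteq V$. Every root $s\in S_{P^D_X}$ can reach $X$ in $D$, so its reachability $s$-arborescence meets $X$; if moreover $s\notin X$, this arborescence contains $s$ outside $X$ and a vertex inside $X$, so its unique $s$--to--$X$ path uses at least one arc of $\delta^-_A(X)$. Since the arborescences are arc-disjoint and $S_X\subseteq S_{P^D_X}$ (because $X\subseteq P^D_X$), the $|S_{P^D_X}|-|S_X|$ roots in $S_{P^D_X}\setminus S_X$ contribute pairwise distinct arcs entering $X$, whence $d^-_A(X)\ge |S_{P^D_X}|-|S_X|$, which is \eqref{reach1cond}.

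For sufficiency, the key special case is this: if $D$ is strongly connected then $P^D_X=V$ for every non-empty $X$, so \eqref{reach1cond} becomes exactly Edmonds' condition \eqref{edmondscondmulti}, and each reachability $s$-arborescence is spanning; hence \Cref{edmondsarborescencesmulti} delivers the packing. In general I would induct on $|V|$ and peel off a source atom $C$ (an atom that no arc of $D$ enters, so $P^D_C=C$). If $S_C=\emptyset$, no arborescence can meet $C$, and I would delete $C$: for $X\subseteq V-C$ the inequality \eqref{reach1cond} for $X$ in $D-C$ is obtained from the inequality for $X\cup C$ in $D$, using that every arc entering $X$ from $C$ has become internal while $S_C$ is empty. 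If $S_C\neq\emptyset$, then for every non-empty $X\subseteq C$ one has $P^D_X=C$ and every arc entering $X$ has its tail in $C$, so \eqref{reach1cond} restricted to subsets of $C$ is precisely Edmonds' condition for the digraph induced on $C$ with root multiset $S_C$; by \Cref{edmondsarborescencesmulti} there is a packing of $|S_C|$ spanning $s$-arborescences of $C$. I would then contract $C$ to a single vertex $s^*$ carrying $|S_C|$ root copies, verify that \eqref{reach1cond} survives the contraction (the demand $|S_{P^D_v}|$ is unchanged at every $v$, since the roots of $C$ reach exactly the vertices reachable from $C$), apply induction to the smaller digraph, and finally glue each $s^*$-arborescence to one of the spanning arborescences of $C$ along the arcs leaving $C$; arc-disjointness is immediate because the two families live on disjoint arc sets, and the roots in $V-C$ never touch the source $s^*$.

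The main obstacle is that this contraction shrinks $|V|$ only when $|C|\ge 2$: a source atom that is a single root vertex $w$ is fixed by the contraction, so the induction stalls precisely on digraphs all of whose source atoms are singleton roots. Handling this case is the genuine content of the theorem, and the difficulty is that the several arborescences rooted at $w$ may each need to leave $w$ through several of its out-arcs, so one cannot simply redistribute the roots of $w$ among its out-neighbours. I would resolve it by the classical augmenting argument: using \eqref{reach1cond} applied to the sets reachable from $w$, one shows that a single reachability $w$-arborescence $T$ can be removed so that \eqref{reach1cond} still holds for $(D-A(T),\,S-w)$, which lowers the multiplicity of $w$ and eventually turns $w$ into a rootless source to be deleted as above. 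Proving the existence of such a removable $T$—equivalently, carrying out the attendant uncrossing of the tight sets in \eqref{reach1cond}—is where essentially all the work lies and is the step I expect to be hardest.
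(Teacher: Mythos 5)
Your necessity argument is correct, and so is the outer induction: deleting a rootless source atom and contracting a source atom $C$ with $|C|\ge 2$ onto a vertex carrying $|S_C|$ roots both preserve \eqref{reach1cond} (the condition transfers in both directions because no arc enters $C$ and $P^D_C=C$), \Cref{edmondsarborescencesmulti} applies inside $C$ because $P^D_X=C$ for every non-empty $X\subseteq C$, and the gluing does produce arc-disjoint reachability arborescences. For context, the paper itself does not prove \Cref{reach1}: it quotes it, recovers it as the free-matroid case of \Cref{thmCsaba}, and refers to \cite{HSz4} for a derivation from Edmonds' theorem on packing branchings (\Cref{edmondsbranchings}) by induction, so your reduction framework is in the same spirit as the literature.

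The genuine gap is the step you yourself flag as hardest, and it cannot be settled by invoking ``the classical augmenting argument'': peeling off one reachability $w$-arborescence from a singleton source atom $w$ while preserving \eqref{reach1cond}. The standard uncrossing fails here because the deficiency function $b(X)=|S_X|+d_A^-(X)-|S_{P^D_X}|$ is not submodular: one has $P^D_{X\cup Y}=P^D_X\cup P^D_Y$ but only $P^D_{X\cap Y}\subseteq P^D_X\cap P^D_Y$, possibly strictly, so $|S_{P^D_{X\cap Y}}|+|S_{P^D_{X\cup Y}}|\le |S_{P^D_X}|+|S_{P^D_Y}|$ and the term $-|S_{P^D_\cdot}|$ is supermodular rather than submodular. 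Consequently two tight sets need not uncross, and the Lov\'asz--Frank extension argument for \Cref{edmondsarborescencesmulti} does not carry over verbatim; this is precisely why the paper's own supermodularity statement (Claim~\ref{psupmod}) is restricted to core-intersecting petals and why the original Kamiyama--Katoh--Takizawa proof works over a special family of reachable sets. To complete your proof you would need to formulate and prove the peeling lemma over such a restricted family (sets $X$ with $d_A^-(X-C)=0$ for the relevant atom $C$, i.e.\ essentially the petal machinery), or switch to the route of \cite{HSz4}, which groups the roots inside each atom into the root set of a branching and inducts on the number of atoms via \Cref{edmondsbranchings}, avoiding the peeling step entirely.
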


Theorem \ref{reach1} implies Theorem \ref{edmondsarborescencesmulti} because \eqref{edmondscondmulti} implies that  each reachability $s$-arborescence is spanning and it also implies that \eqref{reach1cond} holds. 
We mention that H\"orsch and Szigeti \cite{HSz4} pointed out that Theorem \ref{reach1} can be obtained from Edmonds' result on packing spanning branchings (see Theorem \ref{edmondsbranchings}) by an easy induction.

\subsection{Matroid-based packing of arborescences}\label{mbpoa}

 The directed counterpart of the problem of matroid-based packing of rooted trees of Katoh and Tanigawa \cite{KT}, the problem of  matroid-based packing of arborescences, was solved in \cite{DdGNSz}.

\begin{thm}[Durand de Gevigney, Nguyen, Szigeti \cite{DdGNSz}] \label{thmddgnsz}
Let $D=(V,A)$ be a  digraph, $S$ a multiset of vertices in $V$, and ${\sf M}=(S,\mathcal{I}_{\sf M})$ a matroid with rank function $r_{\sf M}$. 
There exists a complete {\sf M}-based packing of arborescences in $D$  if and only if   
	\begin{eqnarray}
		S_v		&\in&	 \mathcal{I}_{\sf M} \hskip 1truecm \text{ for every } v\in V, \label{matcondori1}\\
		r_{{\sf M}}(S_Z)+d^-_{A}(Z)&\geq &r_{{\sf M}}(S) \hskip .4truecm \text{ for every non-empty } Z\subseteq V.\label{ddgnszcond}
	\end{eqnarray}
\end{thm}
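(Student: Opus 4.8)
The plan is to treat necessity and sufficiency separately, with all the difficulty concentrated in sufficiency.

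For necessity, suppose a complete ${\sf M}$-based packing $\mathcal{B}$ is given. Condition \eqref{matcondori1} is immediate: every copy of $v$ occurring in $S$ is the root of an arborescence that trivially contains $v$, so $S_v\subseteq R^{\mathcal{B}}_v$, and since $R^{\mathcal{B}}_v$ is a basis of ${\sf M}$ it is independent, whence $S_v\in\mathcal{I}_{\sf M}$. For \eqref{ddgnszcond}, fix a non-empty $Z$ and a vertex $v\in Z$; as $R^{\mathcal{B}}_v$ is a basis, $|R^{\mathcal{B}}_v|=r_{\sf M}(S)$. I would split $R^{\mathcal{B}}_v$ according to whether the root lies inside or outside $Z$. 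The roots lying in $Z$ form an independent subset of $S_Z$, so there are at most $r_{\sf M}(S_Z)$ of them; each root lying outside $Z$ belongs to an arborescence whose $(root,v)$-path must cross into $Z$, and since the arborescences are arc-disjoint these crossing arcs are distinct, giving at most $d^-_A(Z)$ of them. Summing the two bounds yields $r_{\sf M}(S)\le r_{\sf M}(S_Z)+d^-_A(Z)$.

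For sufficiency I would argue by induction on $|A|$, writing $r=r_{\sf M}(S)$. The engine is the set function $g(Z):=r_{\sf M}(S_Z)+d^-_A(Z)$: both $d^-_A$ and $Z\mapsto r_{\sf M}(S_Z)$ are submodular (the latter because $S_{Z\cap Z'}$ and $S_{Z\cup Z'}$ are the multiset intersection and union of $S_Z$ and $S_{Z'}$), so $g$ is submodular, and \eqref{ddgnszcond} says $g\ge r$ on non-empty sets. Call $Z$ \emph{tight} if $g(Z)=r$. If some arc enters no tight set, then deleting it keeps every non-empty set at value $\ge r$ (condition \eqref{matcondori1} does not involve arcs), so I delete it, apply the induction hypothesis, and obtain a packing in the smaller digraph, which is also a packing in $D$. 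The crux is therefore the case where every arc enters some tight set.

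In that tight case I would first uncross: since two intersecting tight sets have tight intersection and tight union, the tight sets through any fixed vertex form a lattice, and one can extract a laminar-like structure from them. The goal is then to peel off a single arborescence rooted at a well-chosen $s_0\in S$ and pass to the matroid on $S-s_0$ so that the reduced instance still satisfies \eqref{matcondori1} and \eqref{ddgnszcond}. The delicate point, and what I expect to be the main obstacle, is that the arborescences here need not be spanning: removing one arborescence changes the per-vertex basis requirement only at the vertices it covers, so $s_0$ and its arborescence $B_{s_0}$ must be chosen (guided by an innermost tight set) so that $s_0$ is dispensable precisely outside the vertex set $V(B_{s_0})$ of $B_{s_0}$, while the tightness bookkeeping survives the matroid contraction/deletion. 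Carrying this reduction through, maintaining both conditions simultaneously, is the technical heart of the argument. As a guide, when ${\sf M}$ is the free matroid every $R^{\mathcal{B}}_v$ equals $S$, all arborescences are spanning, \eqref{matcondori1} is vacuous, and \eqref{ddgnszcond} becomes Edmonds' condition \eqref{edmondscondmulti}, so the statement specialises to \Cref{edmondsarborescencesmulti}; this is the regime in which the induction collapses to the classical one, and it is the useful sanity check for the non-spanning tight-set analysis.
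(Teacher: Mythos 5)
Your necessity argument is correct and is the standard one. The sufficiency half, however, has a genuine gap: you name the key step --- choosing a root $s_0$ and an arborescence $B_{s_0}$ so that both conditions survive the reduction --- but you do not carry it out, and the reduction as you set it up does not produce an instance of the same theorem. After deleting $A(B_{s_0})$ and removing $s_0$ from the matroid, a vertex $v\in V(B_{s_0})$ needs its new root multiset to be a basis of the contraction ${\sf M}/s_0$, while a vertex $v\notin V(B_{s_0})$ still needs a basis of ${\sf M}$ avoiding $s_0$; these are different matroid requirements at different vertices, so the induction hypothesis cannot be applied to the residual instance as stated. This mixed requirement is precisely why the published proof in \cite{DdGNSz} does not peel off whole arborescences but instead grows the packing one arc at a time, extending some partial arborescence by a single arc whose addition preserves a suitably reformulated condition, with the uncrossing of tight sets (which you correctly identify) used to show that such an arc always exists. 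The obstacle you flag as ``the technical heart'' is therefore real and unresolved; closing it would require either switching to an arc-by-arc augmentation or strengthening the statement being proved by induction so that it tolerates vertex-dependent rank requirements.

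For comparison, the present paper does not reprove \Cref{thmddgnsz} from scratch: it deduces it from \Cref{bboboibo} by taking $\ell=\ell'=|S|$ and checking that \eqref{matcondori1} and \eqref{ddgnszcond} imply \eqref{ellell}, \eqref{jbuoouou}, \eqref{kjbvgc} and \eqref{dknjkbduzv}; the heavy lifting is hidden in the LP-duality and TDI machinery behind \Cref{bboboiboreach1}. That route is entirely different from both your sketch and the original augmentation argument of \cite{DdGNSz}.
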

\noindent For the free matroid, Theorem \ref{thmddgnsz} reduces to Theorem \ref{edmondsarborescencesmulti}.
We mention that 
\eqref{ddgnszcond} is equivalent to 
	\begin{eqnarray}
	r_{\sf M}(S_{X})+d_A^-(X)	&	\ge	&	r_{\sf M}(S)  \hskip .2truecm \text{ for every subatom } X \text{ of } D.\label{kjbvgc}
	\end{eqnarray}
Indeed,  \eqref{ddgnszcond} trivially implies \eqref{kjbvgc}. To see that \eqref{kjbvgc} implies \eqref{ddgnszcond}, let $Z$ be a non-empty subset of $V.$ Let $V_1,\dots, V_t$ be a topological ordering of the atoms of $D$  that is if   an arc exists from $V_i$ to $V_j,$ then $i<j.$ Let $i$ be the smallest index such that $V_i\cap Z\neq\emptyset.$ Since $Z\neq\emptyset,$ $i$ exists. Then $X=V_i\cap Z$ is a subset of the atom $V_i$, so $X$ is a subatom. Since we have a topological ordering, every arc entering $X$ enters $Z$, so we have  $d^-(X)\le d^-(Z).$ Further, by $X\subseteq Z$ and the monotonicity of $r_{\sf M},$ we have $r_{\sf M}(S_{X})\le r_{\sf M}(S_{Z}).$ Then, by \eqref{kjbvgc}, we get $r_{\sf M}(S) \le r_{\sf M}(S_{X})+d_A^-(X)\le  r_{\sf M}(S_{Z})+d_A^-(Z),$ so \eqref{ddgnszcond} holds.
\medskip

In the previous theorem we were interested in a matroid-based packing of arborescences containing the largest possible number of arborescences, namely $|S|$. In the following result we consider a matroid-based packing of arborescences containing the smallest possible number of arborescences, namely $r_{\sf M}(S)$.

\begin{thm}[Szigeti \cite{szigrooted}] \label{mbpsaori}
Let $D=(V,A)$ be a digraph, $S$ a multiset of vertices in $V$ and ${\sf M}=(S,r_{\sf M})$ a matroid. There exists an ${\sf M}$-based packing of  spanning arborescences in $D$  if and only if 
\begin{eqnarray}
r_{{\sf M}}(S_{\cup{\cal P}})+e_A({\cal P})&\geq &r_{{\sf M}}(S)|\mathcal{P}| \hskip .5truecm \text{ for every subpartition $\mathcal{P}$ of $V.$} \label{mbpsaoricond}
\end{eqnarray}
\end{thm}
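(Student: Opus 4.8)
The plan is to reduce everything to Edmonds' packing theorem (\Cref{edmondsarborescencesmulti}). The key observation is that an \emph{${\sf M}$-based packing of spanning arborescences} is nothing but a packing of arc-disjoint spanning arborescences whose set of roots is a basis $B$ of ${\sf M}$: since every arborescence is spanning, $R^{\mathcal B}_v=B$ for every $v$, so the matroid requirement says exactly that $B$ is a basis, and then the packing consists of $|B|=k:=r_{\sf M}(S)$ arborescences rooted at the distinct elements of $B$. Hence the theorem is equivalent to the claim that, under \eqref{mbpsaoricond}, there is a basis $B$ of ${\sf M}$ for which $D$ admits a packing of spanning $b$-arborescences $(b\in B)$; by \Cref{edmondsarborescencesmulti} (and it is enough to check subatoms) this is in turn equivalent to finding a basis $B$ with $|B_X|+d^-_A(X)\ge k$ for every non-empty $X\subseteq V$.

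Necessity is a direct counting argument. Given such a packing with root basis $B$ and a subpartition $\mathcal P=\{Z_1,\dots,Z_t\}$, I note first that, as the $Z_j$ are pairwise disjoint, each arc enters at most one of them, so the arcs of the packing entering members of $\mathcal P$ are distinct and lie in $e_A(\mathcal P)$. Each of the $k$ spanning arborescences whose root lies outside $Z_j$ must use at least one arc entering $Z_j$, whence at least $k-|B_{Z_j}|$ arcs of the packing enter $Z_j$; summing over $j$ gives $|e_A(\mathcal P)|\ge tk-\sum_j|B_{Z_j}|=tk-|B_{\cup\mathcal P}|$. Since $B_{\cup\mathcal P}$ is independent and contained in $S_{\cup\mathcal P}$, we have $|B_{\cup\mathcal P}|\le r_{\sf M}(S_{\cup\mathcal P})$, and rearranging yields \eqref{mbpsaoricond}.

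For sufficiency I must manufacture the required basis from \eqref{mbpsaoricond}. The case $|\mathcal P|=1$ of \eqref{mbpsaoricond} already gives the pointwise bound $r_{\sf M}(S_X)+d^-_A(X)\ge k$, i.e.\ \eqref{ddgnszcond}; this guarantees that whenever a set $X$ is \emph{deficient} for a basis $B$ (meaning $|B_X|+d^-_A(X)<k$) one has $r_{\sf M}(S_X)>|B_X|$, so $B_X$ does not span $S_X$ and there exist $e\in S_X$ and $f\in B\setminus S_X$ for which $B-f+e$ is again a basis. I would run an exchange argument on an extremally chosen basis $B$ (minimizing a suitable total-deficiency potential). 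The deficiency $\mathrm{def}_B(X)=k-d^-_A(X)-|B_X|$ is supermodular, since $d^-_A$ is submodular and $X\mapsto|B_X|$ is modular; a standard uncrossing of the deficient sets, using this supermodularity, then produces a subpartition $\mathcal P=\{Z_1,\dots,Z_t\}$ consisting of deficient sets.

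At the extremal basis the bookkeeping can be arranged so that $B_{\cup\mathcal P}$ spans $S_{\cup\mathcal P}$, i.e.\ $r_{\sf M}(S_{\cup\mathcal P})=|B_{\cup\mathcal P}|=\sum_j|B_{Z_j}|$; combined with the identity $|e_A(\mathcal P)|=\sum_j d^-_A(Z_j)$ valid for subpartitions, the deficiency of each $Z_j$ yields $r_{\sf M}(S_{\cup\mathcal P})+|e_A(\mathcal P)|=\sum_j\bigl(|B_{Z_j}|+d^-_A(Z_j)\bigr)\le t(k-1)<tk$, contradicting \eqref{mbpsaoricond}. The main obstacle is precisely this last bookkeeping: a single basis exchange that repairs one deficient set may damage another, so I expect the real work to lie in choosing the exchange (and the extremal basis) so that the total deficiency strictly decreases and the terminal configuration has $B$ spanning over $\cup\mathcal P$ — equivalently, in recognising the whole statement as an instance of covering the intersecting-supermodular requirement $k-d^-_A$ by a basis of ${\sf M}$, whose feasibility is governed exactly by the subpartition inequalities \eqref{mbpsaoricond}.
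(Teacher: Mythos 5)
Your reformulation is correct and your necessity argument is complete: since every arborescence in the packing is spanning, $R^{\mathcal B}_v$ is the full multiset of roots for every $v$, so an ${\sf M}$-based packing of spanning arborescences is exactly a packing of arc-disjoint spanning $b$-arborescences $(b\in B)$ for some basis $B$ of ${\sf M}$, and your counting over a subpartition yields \eqref{mbpsaoricond}. The problem is the sufficiency. You have correctly reduced the theorem to the assertion that \eqref{mbpsaoricond} forces the existence of a basis $B$ with $|B_X|+d^-_A(X)\ge r_{\sf M}(S)$ for every non-empty $X$, i.e.\ to covering the positively intersecting supermodular demand $p(X)=r_{\sf M}(S)-d^-_A(X)$ by a basis of ${\sf M}$. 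But that covering statement \emph{is} the entire content of the theorem, and you neither cite a result that delivers it nor carry out the exchange argument: you explicitly leave open how to choose the exchange so that the total deficiency strictly decreases (``a single basis exchange that repairs one deficient set may damage another \dots I expect the real work to lie in choosing the exchange''). Moreover, even the preliminary step is shakier than you suggest: for the intersecting supermodular function $\mathrm{def}_B(X)=r_{\sf M}(S)-d^-_A(X)-|B_X|$, two distinct maximal deficient sets $X,Y$ that intersect only force $\mathrm{def}_B(X\cap Y)\ge \mathrm{def}_B(X)+\mathrm{def}_B(Y)>0$ (since $\mathrm{def}_B(X\cup Y)\le 0$ by maximality), which is no contradiction, so maximal deficient sets need not be pairwise disjoint; producing the subpartition $\{Z_1,\dots,Z_t\}$ \emph{together with} the spanning property $|B_{\cup\mathcal P}|=r_{\sf M}(S_{\cup\mathcal P})$ at an extremal basis is precisely the work that is missing. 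As written, the proof has a genuine gap at its central step.

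For context, the paper does not prove this theorem by a direct basis-exchange either: it derives it from \Cref{bboboibo} with $\ell=\ell'=r_{\sf M}(S)$, the only nontrivial verification being that \eqref{mbpsaoricond} implies the OW-laminar biset inequality \eqref{dknjkbduzv}, which is established by an induction over the nested families $\mathcal P_{\sf Y}$ and $\mathcal Q_{\sf Y}$. Your route (reduction to Edmonds plus a matroidal covering theorem for intersecting supermodular demands) is a legitimate alternative, but to make it a proof you would have to either prove that covering theorem or invoke it from the literature with a precise statement.
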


For the free matroid, Theorem \ref{mbpsaori} reduces to Theorem \ref{edmondsarborescencesmulti}.

\subsubsection{New results on matroid-based packing of arborescences}

Here we propose to study a  problem that is more general than the above two problems. We give the following common generalization of Theorems \ref{thmddgnsz} and \ref{mbpsaori} where we have a lower bound and an upper bound on the number of arborescences in the packing.

\begin{thm} \label{bboboibo}
Let $D=(V,A)$ be a  digraph, $S$ a multiset of vertices in $V$, $\ell, \ell'\in\mathbb{Z}_+$, and ${\sf M}=(S,r_{\sf M})$ a matroid. There exists an ${\sf M}$-based $(\ell,\ell')$-limited packing of arborescences  in $D$ if and only if \eqref{kjbvgc} holds and
\begin{eqnarray} 
	\ell	&	\le	&	\ell',	\label{ellell}		\\
		\sum_{v\in V}r_{\sf M}(S_v)	&	\ge	&	\ell,	\label{jbuoouou}	\\
	\sum_{{\sf X}\in\mathcal{P}}(r_{\sf M}(S)-r_{\sf M}(S_{X_W})-d_A^-(X_O)) 	&	\le	&	\ell'	 
	\hskip .2truecm  \text{ for every OW laminar biset family } {\cal P} \text{ of subatoms}.\hskip .8truecm 	\label{dknjkbduzv}
\end{eqnarray}
\end{thm}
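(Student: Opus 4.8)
The plan is to prove the two directions separately, treating necessity by a direct counting argument and sufficiency by the polyhedral/TDI method set up in \Cref{EG,FJTDI,tdiclaim}.

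For necessity, suppose $\mathcal B$ is an ${\sf M}$-based $(\ell,\ell')$-limited packing with $k$ arborescences, so $\ell\le k\le\ell'$, which gives \eqref{ellell} immediately. Condition \eqref{kjbvgc} is the same necessary condition as for \Cref{thmddgnsz} and is checked exactly as there. For \eqref{jbuoouou} I note that at most $r_{\sf M}(S_v)$ arborescences can be rooted at any vertex $v$, since their roots all cover $v$, hence lie in the basis $R^{\mathcal B}_v$ and must be independent; thus $k\le\sum_v r_{\sf M}(S_v)$, and $k\ge\ell$ then forces \eqref{jbuoouou}. The heart of necessity is \eqref{dknjkbduzv}: fixing an OW-laminar family $\mathcal P$ of subatom bisets and any ${\sf X}\in\mathcal P$ with a vertex $v\in X_I$, I split the basis $R^{\mathcal B}_v$ according to whether a root lies in $X_W$, in $X_I$, or outside $X_O$. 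The first part has size at most $r_{\sf M}(S_{X_W})$, the last at most $d_A^-(X_O)$ because the arborescences are arc-disjoint and each enters $X_O$ on a distinct arc; hence the number of arborescences rooted inside $X_I$ is at least $r_{\sf M}(S)-r_{\sf M}(S_{X_W})-d_A^-(X_O)$. The key structural point is that OW-laminarity makes the inner sets $\{X_I:{\sf X}\in\mathcal P\}$ pairwise disjoint (core-disjoint bisets have disjoint inner sets, and for core-intersecting ones the relation $X^1_O\subseteq X^2_W$ already separates $X^1_I$ from $X^2_I$), so summing these lower bounds over $\mathcal P$ counts distinct arborescences and yields $\sum_{{\sf X}\in\mathcal P}(r_{\sf M}(S)-r_{\sf M}(S_{X_W})-d_A^-(X_O))\le k\le\ell'$.

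For sufficiency I would work polyhedrally. The subgraphs of $D$ admitting a complete ${\sf M}$-based packing are cut out by in-degree inequalities of the form ${\sf d}^-_A({\sf X})\ge {\sf p}({\sf X})$ for a biset function ${\sf p}$ assembled from $r_{\sf M}$, essentially the biset form of \eqref{ddgnszcond}; the first task is to choose ${\sf p}$ so that it is positively intersecting supermodular and dominated by ${\sf d}^-_A$, which lets \Cref{FJTDI} certify that the associated system together with $\mathbb 0\le x\le\mathbb 1$ is TDI, while \Cref{tdiclaim} lets me replace it by the equivalent system indexed only by the subatom bisets that are actually needed. The number of arborescences of a packing equals the number of activated roots, which is a linear functional of the root-selection variables, and I append the two bounds $\ell\le \#\text{arborescences}\le\ell'$ to the TDI system. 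Because these are simple cardinality-type constraints, \Cref{EG} still applies and produces an integral optimum whenever the enlarged polyhedron is non-empty, and an integral point of it is exactly an ${\sf M}$-based $(\ell,\ell')$-limited packing.

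It then remains to show the enlarged polyhedron is non-empty precisely under \eqref{kjbvgc}, \eqref{ellell}, \eqref{jbuoouou}, \eqref{dknjkbduzv}. I would do this by computing, via LP duality on the TDI system, the minimum and the maximum number of arborescences over all ${\sf M}$-based packings: the maximum equals $\sum_v r_{\sf M}(S_v)$ (by the argument behind \Cref{thmddgnsz}, with the spanning extreme governed by \Cref{mbpsaori}), and the minimum equals $\max_{\mathcal P}\sum_{{\sf X}\in\mathcal P}(r_{\sf M}(S)-r_{\sf M}(S_{X_W})-d_A^-(X_O))$ taken over OW-laminar subatom families $\mathcal P$. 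Conditions \eqref{jbuoouou} and \eqref{dknjkbduzv} say exactly that $\ell$ does not exceed this maximum and $\ell'$ is not below this minimum, while \eqref{ellell} together with an augmentation argument — splitting one arborescence or activating one further root changes the count by exactly one — shows that the set of realizable counts is the full integer interval between the minimum and the maximum, so some value in $[\ell,\ell']$ is attained. I expect the main obstacle to be twofold: verifying that ${\sf p}$ is positively intersecting supermodular, and carrying out the uncrossing that turns an arbitrary optimal dual solution into an OW-laminar family supported on subatoms, since this is where the precise shape of \eqref{dknjkbduzv}, and the passage from general petal bisets down to subatoms, is forced.
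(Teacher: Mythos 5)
Your necessity argument is correct and is essentially the paper's (it appears there as the necessity part of \Cref{bboboiboreach}, specialized to the based case): the decomposition of $R^{\mathcal B}_v$ into the roots lying in $X_I$, in $X_W$, and outside $X_O$, the bounds $|R^v_{X_W}|\le r_{\sf M}(S_{X_W})$ and $|R^v_{\overline{X_O}}|\le d_A^-(X_O)$, and the pairwise disjointness of the inner sets under OW-laminarity are exactly the steps used in Subsection \ref{pr12}.

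The sufficiency half has genuine gaps. First, the paper does not attack the ${\sf M}$-based polyhedron directly: it derives \Cref{bboboibo} from the reachability-based \Cref{bboboiboreach}, via the observation \eqref{igcgch} that \eqref{kjbvgc} forces $r_{\sf M}(S_Z)\ge r_{\sf M}(S)$ for every non-empty $Z$ with $d_A^-(Z)=0$, so that $r_{\sf M}(S_{P_v})=r_{\sf M}(S)$ and every reachability-based packing is automatically based; if you work directly you still owe a TDI description of the subgraphs admitting a \emph{non-complete} ${\sf M}$-based packing, which is not \Cref{thmddgnsz}. Second, and more seriously, the number of arborescences is not a linear functional of the arc variables of $D$; the paper linearizes it by passing to the extended digraph $D'=(V\cup S',A\cup A')$ and counting used arcs of $A'$. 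You instead propose to append the constraints that the count lie in $[\ell,\ell']$ to the TDI system and claim \Cref{EG} ``still applies''; that step is unjustified, since TDIness is not preserved under adding inequalities, and this is precisely the difficulty the paper circumvents by a two-stage argument: an LP with cost $1$ on $A'$ and $0$ on $A$ computes the minimum number of roots, complementary slackness together with two tie-breaking rules (first minimize $\mathbb{1}^T{\overline y\choose\overline q}$, then maximize $\sum_Z|Z|^2\overline y(Z)$) turns the dual optimum into an $A'$-disjoint core-laminar family certifying that this minimum is at most $\ell'$ (Lemma \ref{intoptsolprop}), and the lower bound $\ell$ is then met not by the LP but by augmenting the root multiset while keeping $N^-_{A_2}(v)$ independent and invoking the complete-packing theorem \Cref{thmGY}(b) on the modified instance. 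Third, your interpolation claim --- that splitting an arborescence or activating one further root realizes every count between the minimum and the maximum --- is not justified for ${\sf M}$-based packings: $R^{\mathcal B}_v$ must be a \emph{basis} at every vertex, so adding a root at $v$ forces a simultaneous exchange in all the bases $R^{\mathcal B}_u$ along the stolen subtree, and no common exchange element need exist. The paper never needs the full integer interval; it only needs one achievable value in $[\ell,\ell']$, obtained either as the LP minimum (when that is already at least $\ell$) or as exactly $\ell$ after the root augmentation.
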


\begin{figure}[ht]
	\centering
	\begin{subfigure}[b]{0.5125\textwidth}
	\centering
	\begin{tikzpicture}[scale=0.625]
		\begin{pgfonlayer}{nodelayer}
			\node [style=base] (0) at (0, 0) {};
			\node [style=none] (1) at (0, 0.5) {$s_2$};
			\node [style=base] (2) at (-1.5, -1.5) {};
			\node [style=base] (3) at (1.5, -1.5) {};
			\node [style=base] (4) at (3, -3) {};
			\node [style=base] (5) at (-3, -3) {};
			\node [style=base] (6) at (-3, 3) {};
			\node [style=base] (7) at (3, 3) {};
			\node [style=none] (8) at (-3.5, 3.25) {$s_4$};
			\node [style=none] (9) at (3.5, 3.25) {$s_3$};
			\node [style=none] (10) at (-1, -1.75) {$s_1$};
			\node [style=none] (11) at (1, -1.75) {$s_1'$};
			\node [style=none] (12) at (-3.75, -3.75) {};
			\node [style=none] (13) at (-0.75, -0.75) {};
			\node [style=none] (14) at (3.75, -3.75) {};
			\node [style=none] (15) at (0.75, -0.75) {};
			\node [style=none] (16) at (0, 1) {};
			\node [style=none] (17) at (-4, -4) {};
			\node [style=none] (18) at (4, -4) {};
			\node [style=none] (19) at (-4.75, 3.25) {};
			\node [style=none] (20) at (-4.75, -4.5) {};
			\node [style=none] (21) at (4.75, -4.5) {};
			\node [style=none] (22) at (4.75, 3.25) {};
			\node [style=none] (23) at (4, 3) {};
			\node [style=none] (24) at (-4, 3) {};
		\end{pgfonlayer}
		\begin{pgfonlayer}{edgelayer}
			\draw [line width=2pt, draw=coolorange!120] (13.center)
				 to [in=-45, out=-45] (12.center)
				 to [in=135, out=135] cycle;
			\draw [line width=1.5pt, draw=coolorange!150, fill=coolorange!10] (-1.5, -1.5) circle (0.80);

			\draw [line width=2pt, draw=coolblue!120] (15.center)
				 to [in=45, out=45] (14.center)
				 to [in=-135, out=-135] cycle;
			\draw [line width=1.5pt, draw=coolblue!150, fill=coolblue!10] (1.5, -1.5) circle (0.80);

			\draw [line width=2pt, draw=coolpink!140] (17.center)
				 to [in=-135, out=-45, looseness=0.50] (18.center)
				 to [in=0, out=45] (16.center)
				 to [in=135, out=180] cycle;
			\draw [line width=1.5pt, draw=coolpink!150, fill=coolpink!10] (0,0) circle (0.80);

			\draw [line width=2pt, draw=coolred!120] (19.center)
				 to [in=120, out=-120, looseness=0.50] (20.center)
				 to [in=-120, out=-60, looseness=0.50] (21.center)
				 to [in=-60, out=60, looseness=0.50] (22.center)
				 to [in=60, out=120, looseness=0.50] cycle;
			\draw [line width=1.5pt, draw=coolred!130, fill=coolred!10] (23.center)
				 to [in=-90, out=-90, looseness=0.50] (24.center)
				 to [in=90, out=90, looseness=0.50] cycle;

			\draw [style=arrow] (0) to (6);
			\draw [style=arrow] (0) to (7);
			\draw [style=arrow] (0) to (2);
			\draw [style=arrow] (0) to (3);
			\draw [style=arrow, bend left] (2) to (0);
			\draw [style=arrow, bend right] (2) to (0);
			\draw [style=arrow, bend right] (3) to (0);
			\draw [style=arrow, bend left] (3) to (0);
			\draw [style=arrow, bend left, looseness=0.75] (6) to (7);
			\draw [style=arrow] (7) to (6);
			\draw [style=arrow] (4) to (3);
			\draw [style=arrow, bend right=60] (4) to (3);
			\draw [style=arrow, bend left=60] (4) to (3);
			\draw [style=arrow, bend right] (3) to (4);
			\draw [style=arrow, bend left] (3) to (4);
			\draw [style=arrow] (5) to (2);
			\draw [style=arrow, in=285, out=-15] (5) to (2);
			\draw [style=arrow, bend left=60] (5) to (2);
			\draw [style=arrow, bend right] (2) to (5);
			\draw [style=arrow, bend left] (2) to (5);
			\draw [style=arrow, bend right=45, looseness=0.75] (5) to (4);
			\draw [style=arrow, bend left=75, looseness=0.75] (4) to (5);
			\draw [style=arrow, bend right] (6) to (5);
			\draw [style=arrow, bend left] (7) to (4);
			\draw [style=arrow, bend right=45] (4) to (7);
			\draw [style=arrow, bend right=15] (4) to (7);
			\draw [style=arrow, bend left=15] (5) to (6);
			\draw [style=arrow, bend left=45] (5) to (6);
		\end{pgfonlayer}
	\end{tikzpicture}
	
	\caption{Family giving a lower bound of $5$ arborescences.}
	\label{fig:mbased-packing-example-bisets}
	
	\end{subfigure}
	\begin{subfigure}[b]{0.475\textwidth}
	\centering
	\begin{tikzpicture}[scale=0.75]
		\begin{pgfonlayer}{nodelayer}
			\node [style=base, fill=coolpink!100] (0) at (0, 0) {};
			\node [style=none] (1) at (0, 0.5) {$s_2$};
			\node [style=base, fill=coolgreen!70] (2) at (-1.5, -1.5) {};
			\node [style=base, fill=coolorange!70] (3) at (1.5, -1.5) {};
			\node [style=base] (4) at (3, -3) {};
			\node [style=base] (5) at (-3, -3) {};
			\node [style=base, fill=coolred!100] (6) at (-3, 3) {};
			\node [style=base, fill=coolblue!70] (7) at (3, 3) {};
			\node [style=none] (8) at (-3.5, 3.25) {$s_4$};
			\node [style=none] (9) at (3.5, 3.25) {$s_3$};
			\node [style=none] (10) at (-1, -1.75) {$s_1$};
			\node [style=none] (11) at (1, -1.75) {$s_1'$};
		\end{pgfonlayer}
		\begin{pgfonlayer}{edgelayer}
			\draw [style=arrow, draw=coolpink!140] (0) to (6);
			\draw [style=arrow, draw=coolpink!140] (0) to (7);
			\draw [style=arrow, draw=coolpink!140] (0) to (2);
			\draw [style=arrow, draw=coolpink!140] (0) to (3);
			\draw [style=arrow, bend left, draw=coolred!150] (2) to (0);
			\draw [style=arrow, bend right, draw=coolgreen!120] (2) to (0);
			\draw [style=arrow, bend right, draw=coolblue!120] (3) to (0);
			\draw [style=arrow, bend left] (3) to (0);
			\draw [style=arrow, bend left, looseness=0.75, draw=coolred!150] (6) to (7);
			\draw [style=arrow, draw=coolblue!120] (7) to (6);
			\draw [style=arrow, draw=coolred!150] (4) to (3);
			\draw [style=arrow, bend right=60, draw=coolblue!120] (4) to (3);
			\draw [style=arrow, bend left=60] (4) to (3);
			\draw [style=arrow, bend right,, draw=coolpink!140] (3) to (4);
			\draw [style=arrow, bend left, draw=coolorange!115] (3) to (4);
			\draw [style=arrow, draw=coolblue!120] (5) to (2);
			\draw [style=arrow, bend right=60] (5) to (2);
			\draw [style=arrow, bend left=60, draw=coolred!150] (5) to (2);
			\draw [style=arrow, bend right, draw=coolgreen!120] (2) to (5);
			\draw [style=arrow, bend left, draw=coolpink!140] (2) to (5);
			\draw [style=arrow, bend right=45, looseness=0.75, , draw=coolred!150] (5) to (4);
			\draw [style=arrow, bend left=75, looseness=0.75, , draw=coolblue!120] (4) to (5);
			\draw [style=arrow, bend right, color=coolred!150] (6) to (5);
			\draw [style=arrow, bend left, draw=coolblue!120] (7) to (4);
			\draw [style=arrow, bend right=45, draw=coolorange!115] (4) to (7);
			\draw [style=arrow, bend right=15] (4) to (7);
			\draw [style=arrow, bend left=15] (5) to (6);
			\draw [style=arrow, bend left=45,draw=coolgreen!120] (5) to (6);
		\end{pgfonlayer}
	\end{tikzpicture}
	\caption{A matroid-based packing of  5 arborescences.}
	\label{fig:mbased-packing-example-packing}
	\end{subfigure}
	\caption{Instance of \Cref{bboboibo}.}
	\label{fig:mbased-packing-example}
\end{figure}

We provide an  instance of \Cref{bboboibo} in \Cref{fig:mbased-packing-example} with the given digraph $D=(V,A)$ and the matroid ${\sf M}=(S,\mathcal{I}_{\sf M})$ where $S=\{s_1,s'_1,s_2,s_3,s_4\}$ and a set is in $\mathcal{I}_{\sf M}$ if and only if it contains at most one of $s_1$ and $s'_1$. In \Cref{fig:mbased-packing-example-bisets}, we give an OW laminar biset family of subatoms such that, for the function ${\sf f}({\sf X}) = r_{\sf M}(S) - r_{\sf M}(S_{X_W}) - d^-_A(X_O)$, we have \mbox{${\sf f}({\bf \color{coolred!150}{\sf A}})=2, {\sf f}({\bf \color{coolpink!125}{\sf B}}) = 1, {\sf f}({\bf \color{coolorange!150}{\sf C}}) = 1,$ and ${\sf f}({\bf \color{coolblue!100}{\sf D}}) = 1$}. Thus, by \Cref{bboboibo}, the given digraph requires at least $5$ arborescences and a matroid-based packing of 5 arborescences is  shown in \Cref{fig:mbased-packing-example-packing}.

Theorem \ref{bboboibo} will be later obtained from Theorem \ref{bboboiboreach}.

\medskip

We now provide the answer to the decomposition problem for the matroid-based version. 
It will be obtained from Theorem \ref{bboboibo}.

\begin{thm} \label{vouyfy}
Let $D=(V,A)$ be a  digraph, $S$ a multiset of vertices in $V$, and ${\sf M}=(S,r_{\sf M})$ a matroid. There exists a decomposition of $A$ into an ${\sf M}$-based packing of arborescences  in $D$ if and only if \eqref{kjbvgc} holds and for every OW laminar biset family ${\cal P}$ of subatoms, 
\begin{eqnarray} \label{ljhfygdtu}
	\sum_{{\sf X}\in\mathcal{P}}(r_{\sf M}(S)-r_{\sf M}(S_{X_W})-d_A^-(X_O)) 	&	\le	&r_{\sf M}(S)|V|-|A|. 
\end{eqnarray}
\end{thm}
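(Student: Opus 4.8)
The plan is to derive \Cref{vouyfy} from \Cref{bboboibo} by choosing bounds $\ell,\ell'$ that force an ${\sf M}$-based packing to be a decomposition. The starting point is a counting identity. In any ${\sf M}$-based packing $\mathcal{B}$ of $k$ arborescences, every vertex $v\in V$ lies in exactly $r_{\sf M}(S)$ of them, since $R^{\mathcal{B}}_v$ is a basis of ${\sf M}$ and every basis of ${\sf M}$ has size $r_{\sf M}(S)$. As an arborescence on a vertex set $U_i$ contributes $|U_i|-1$ arcs, double counting the vertex--arborescence incidences gives that the total number of arcs used by $\mathcal{B}$ is
\[ \sum_{i=1}^{k}(|U_i|-1)=\sum_{v\in V}|R^{\mathcal{B}}_v|-k=r_{\sf M}(S)|V|-k. \]
I would record this identity as the key observation of the proof.

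Next I would translate ``decomposition'' into a cardinality constraint. A packing is arc-disjoint and uses a subset of $A$, so it decomposes $A$ exactly when it uses all $|A|$ arcs; by the identity this happens precisely when $k=k_0:=r_{\sf M}(S)|V|-|A|$. Hence a decomposition of $A$ into an ${\sf M}$-based packing exists if and only if there is an ${\sf M}$-based $(k_0,k_0)$-limited packing of arborescences. It therefore suffices to apply \Cref{bboboibo} with $\ell=\ell'=k_0$ and to check that its conditions \eqref{ellell}--\eqref{dknjkbduzv} collapse to exactly \eqref{kjbvgc} and \eqref{ljhfygdtu}.

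The verification is where the (minor) technical points lie, and I would dispatch them in turn. Condition \eqref{ellell} is vacuous because $\ell=\ell'$, and condition \eqref{dknjkbduzv} with $\ell'=k_0=r_{\sf M}(S)|V|-|A|$ is verbatim \eqref{ljhfygdtu}. The two items to be shown redundant are the requirement $k_0\in\mathbb{Z}_+$ and the lower bound \eqref{jbuoouou}. For the first, since $r_{\sf M}(S),|V|,|A|$ are integers, $k_0$ is an integer, and applying \eqref{ljhfygdtu} to the empty family yields $0\le r_{\sf M}(S)|V|-|A|$, so $k_0\ge 0$. For the second, I would apply \eqref{kjbvgc} to the singleton subatoms $\{v\}$, which gives $r_{\sf M}(S_v)\ge r_{\sf M}(S)-d_A^-(\{v\})$ for each $v$; summing over $V$ and using that the in-degrees sum to $|A|$ produces $\sum_{v\in V}r_{\sf M}(S_v)\ge r_{\sf M}(S)|V|-|A|=k_0=\ell$, which is precisely \eqref{jbuoouou}.

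With these reductions in hand, \Cref{bboboibo} applied with $\ell=\ell'=k_0$ delivers both directions of the equivalence at once, since that theorem is itself an ``if and only if''. The main obstacle is conceptual rather than computational: recognizing via the arc-count identity that requiring a decomposition is equivalent to pinning the number of arborescences at $k_0$, and then observing that with this choice the lower-bound conditions \eqref{ellell} and \eqref{jbuoouou} of the $(\ell,\ell')$-limited theorem become automatic consequences of \eqref{kjbvgc}, leaving only \eqref{kjbvgc} and \eqref{ljhfygdtu}. Everything beyond that is a direct substitution.
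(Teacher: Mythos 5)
Your proposal is correct and follows essentially the same route as the paper: both reduce to \Cref{bboboibo} with $\ell=\ell'=r_{\sf M}(S)|V|-|A|$, use the per-vertex count $|S^*_v|=r_{\sf M}(S)-d_A^-(v)$ (equivalently your arc-count identity) to translate ``decomposition'' into this cardinality constraint, and derive \eqref{jbuoouou} by summing \eqref{kjbvgc} over singletons. Your extra check that $r_{\sf M}(S)|V|-|A|\ge 0$ via the empty family is a small point the paper leaves implicit, not a difference in method.
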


We conclude  by showing that Theorem \ref{bboboibo} implies all the results of  Subsection \ref{mbpoa}.

\begin{cl}
Theorem \ref{bboboibo} implies Theorem \ref{thmddgnsz}.
\end{cl}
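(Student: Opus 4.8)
The plan is to apply \Cref{bboboibo} with $\ell=\ell'=|S|$. I first note that an ${\sf M}$-based $(|S|,|S|)$-limited packing is precisely a complete ${\sf M}$-based packing: it consists of exactly $|S|$ arborescences, each rooted at an element of $S$, with every $s\in S$ the root of at most one of them, so every $s\in S$ must be the root of exactly one. Hence, assuming \eqref{matcondori1} and \eqref{ddgnszcond}, it suffices to verify the four conditions of \Cref{bboboibo} for this choice of $\ell,\ell'$ (by the paper's convention we show only sufficiency).

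Three of these are immediate. Condition \eqref{kjbvgc} follows from \eqref{ddgnszcond} by restricting to subatoms, and \eqref{ellell} reads $|S|\le|S|$. For \eqref{jbuoouou}, subcardinality of $r_{\sf M}$ gives $\sum_{v\in V}r_{\sf M}(S_v)\le\sum_{v\in V}|S_v|=|S|$, while \eqref{matcondori1} says $r_{\sf M}(S_v)=|S_v|$ for each $v$, so in fact $\sum_{v\in V}r_{\sf M}(S_v)=|S|=\ell$, as needed.

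The main work is \eqref{dknjkbduzv}: bounding $\sum_{{\sf X}\in\mathcal{P}}(r_{\sf M}(S)-r_{\sf M}(S_{X_W})-d_A^-(X_O))$ by $|S|$ for every OW laminar biset family $\mathcal{P}$ of subatoms. For one biset ${\sf X}\in\mathcal{P}$, since $X_O$ is a subatom, \eqref{kjbvgc} gives $d_A^-(X_O)\ge r_{\sf M}(S)-r_{\sf M}(S_{X_O})$, so the summand is at most $r_{\sf M}(S_{X_O})-r_{\sf M}(S_{X_W})$; as $X_O$ is the disjoint union of $X_W$ and $X_I$, submodularity of $r_{\sf M}$ bounds this by $r_{\sf M}(S_{X_I})\le|S_{X_I}|$.

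It then remains to show $\sum_{{\sf X}\in\mathcal{P}}|S_{X_I}|\le|S|$, and the hard part is to see that the inner sets $\{X_I:{\sf X}\in\mathcal{P}\}$ are pairwise disjoint. For a biset on a subatom the core of $X_O$ equals $X_O$, so two bisets of $\mathcal{P}$ whose inner sets meet have intersecting outer sets and are therefore core-intersecting; OW laminarity then forces $X^1_O\subseteq X^2_W$ or $X^2_O\subseteq X^1_W$, and in either case one inner set lies in a wall disjoint from the other inner set, contradicting that the inner sets meet. Thus the sets $X_I$ are pairwise disjoint, the $S_{X_I}$ are disjoint submultisets of $S$, and $\sum_{{\sf X}\in\mathcal{P}}|S_{X_I}|\le|S|$, giving \eqref{dknjkbduzv}. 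Extracting this disjointness from OW laminarity is the only subtle step; the rest is bookkeeping.
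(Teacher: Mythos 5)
Your proof is correct and follows essentially the same route as the paper: set $\ell=\ell'=|S|$, verify \eqref{ellell}, \eqref{jbuoouou}, \eqref{kjbvgc} directly, and establish \eqref{dknjkbduzv} via the chain $r_{\sf M}(S)-r_{\sf M}(S_{X_W})-d_A^-(X_O)\le r_{\sf M}(S_{X_O})-r_{\sf M}(S_{X_W})\le r_{\sf M}(S_{X_I})\le|S_{X_I}|$ together with disjointness of the inner sets. The only difference is that you spell out why OW laminarity forces the $X_I$'s to be pairwise disjoint, a fact the paper asserts without proof; your justification of it is correct.
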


 \begin{proof} Let $(D,S,{\sf M})$ be an instance of Theorem \ref{thmddgnsz} satisfying \eqref{matcondori1} and \eqref{ddgnszcond}. Let $\ell=\ell'=|S|.$ The condition \eqref{ellell} trivially holds.
By \eqref{matcondori1}, we have $\sum_{v\in V}r_{\sf M}(S_v)=\sum_{v\in V}|S_v|=|S|,$ so \eqref{jbuoouou} is satisfied. By \eqref{ddgnszcond}, \eqref{kjbvgc} holds.
 Further, let ${\cal P}$ be an OW laminar  biset family  of subatoms. Then the inner sets of the bisets in ${\cal P}$ are disjoint. By \eqref{ddgnszcond}, the submodularity and the subcardinality of $r_{\sf M},$ and since $X_I$'s are disjoint, we have  $$\sum_{{\sf X}\in\mathcal{P}}(r_{\sf M}(S)-r_{\sf M}(S_{X_W})-d_A^-(X_O))\le \sum_{{\sf X}\in\mathcal{P}}(r_{\sf M}(S_{X_O})-r_{\sf M}(S_{X_W}))\le \sum_{{\sf X}\in\mathcal{P}}r_{\sf M}(S_{X_I})\le \sum_{{\sf X}\in\mathcal{P}}|S_{X_I}|\le |S|,$$ so \eqref{dknjkbduzv} is satisfied. Hence, by Theorem \ref{bboboibo}, there exists an ${\sf M}$-based packing of $s$-arborescences $(s\in S)$ in $D$, that is, a complete ${\sf M}$-based packing of arborescences in $D.$
 \end{proof}

\begin{cl}
Theorem \ref{bboboibo} implies Theorem \ref{mbpsaori}.
\end{cl}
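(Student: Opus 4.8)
The goal is to show that Theorem \ref{bboboibo}, applied with a suitable choice of $\ell$ and $\ell'$, reduces to Theorem \ref{mbpsaori}. Since Theorem \ref{mbpsaori} concerns an ${\sf M}$-based packing of \emph{spanning} arborescences, whose number is necessarily the minimum possible value $r_{\sf M}(S)$, the plan is to set $\ell=\ell'=r_{\sf M}(S)$ in Theorem \ref{bboboibo}. Under this choice, an $(\ell,\ell')$-limited packing is exactly a packing of precisely $r_{\sf M}(S)$ arborescences whose root multisets form bases at every vertex; because each vertex must receive a full basis of size $r_{\sf M}(S)$ and there are only $r_{\sf M}(S)$ arborescences in total, every arborescence must reach every vertex, i.e.\ the packing consists of spanning arborescences. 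So I would first argue this equivalence between the two notions of packing for this value of $\ell,\ell'$.

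The main task is then to verify that the hypotheses of Theorem \ref{mbpsaori}, namely the subpartition condition \eqref{mbpsaoricond}, are equivalent to the conditions \eqref{kjbvgc}, \eqref{ellell}, \eqref{jbuoouou} and \eqref{dknjkbduzv} of Theorem \ref{bboboibo} with $\ell=\ell'=r_{\sf M}(S)$. First I would dispose of the easy conditions: \eqref{ellell} holds trivially since $\ell=\ell'$, and \eqref{jbuoouou} reads $\sum_{v\in V}r_{\sf M}(S_v)\ge r_{\sf M}(S)$, which follows from the subpartition condition \eqref{mbpsaoricond} applied to the subpartition of singletons (or directly from monotonicity, noting $r_{\sf M}(S_{\cup{\cal P}})\le r_{\sf M}(S)$). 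To obtain \eqref{kjbvgc}, I would apply \eqref{mbpsaoricond} to the subpartition $\{X\}$ consisting of a single subatom $X$, giving $r_{\sf M}(S_X)+e_A(\{X\})\ge r_{\sf M}(S)$, and note $e_A(\{X\})=d_A^-(X)$; conversely \eqref{kjbvgc} is a special case needed below.

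The crux is relating the OW-laminar biset condition \eqref{dknjkbduzv} with right-hand side $\ell'=r_{\sf M}(S)$ to the subpartition condition \eqref{mbpsaoricond}. Rewriting \eqref{dknjkbduzv} in the form
\[
\sum_{{\sf X}\in\mathcal{P}} r_{\sf M}(S_{X_W}) + \sum_{{\sf X}\in\mathcal{P}} d_A^-(X_O) \ \ge\ r_{\sf M}(S)\,(|\mathcal{P}|-1),
\]
I would compare this to \eqref{mbpsaoricond} written as $r_{\sf M}(S_{\cup{\cal P}})+e_A({\cal P})\ge r_{\sf M}(S)|{\cal P}|$. The natural correspondence sends a subpartition $\{Z_1,\dots,Z_k\}$ of $V$ into subatoms to a biset family and vice versa; the point is that for a biset ${\sf X}$ on an atom one has $X_W=\emptyset$ (since, as noted in the Definitions, a biset on an atom has its petal equal to its core, so $X_O=X_I$ and $X_W=X_O-X_I=\emptyset$), whence $r_{\sf M}(S_{X_W})=r_{\sf M}(\emptyset)=0$. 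The difference between the two right-hand sides, namely the extra factor $r_{\sf M}(S)$ and the shift by one unit of $|\mathcal{P}|$, is exactly what must be reconciled, and this bookkeeping will be the main obstacle: I expect to handle it by decomposing an arbitrary subpartition into its subatom pieces (as in the proof that \eqref{kjbvgc} implies \eqref{ddgnszcond} given in the excerpt, via a topological ordering of atoms) and checking that each inequality of one system is implied by, and in turn implies, inequalities of the other, using monotonicity, submodularity, and the disjointness of the inner sets $X_I$. Once this equivalence of conditions is established, invoking Theorem \ref{bboboibo} with $\ell=\ell'=r_{\sf M}(S)$ yields the desired ${\sf M}$-based packing of spanning arborescences, completing the reduction.
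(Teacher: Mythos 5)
Your setup is right and matches the paper: take $\ell=\ell'=r_{\sf M}(S)$, note \eqref{ellell} is trivial, get \eqref{jbuoouou} from monotonicity/submodularity, get \eqref{kjbvgc} by applying \eqref{mbpsaoricond} to a single subatom, and observe at the end that a packing of exactly $r_{\sf M}(S)$ arborescences whose root sets are bases must consist of spanning arborescences. However, there is a genuine gap in the one step that carries all the difficulty, namely deriving \eqref{dknjkbduzv} with $\ell'=r_{\sf M}(S)$ from \eqref{mbpsaoricond}. Your argument rests on the assertion that a biset ${\sf X}$ in an OW-laminar family of subatoms has $X_O=X_I$ and hence $X_W=\emptyset$. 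That is a misreading of the definitions: for such a biset, $X_O$ is a subatom and $X_I$ is merely a \emph{non-empty subset} of $X_O$, so $X_W=X_O-X_I$ is in general non-empty, and OW-laminarity allows genuine nesting ($X^1_O\subseteq X^2_W$), so the family is not a subpartition. The paper's own example (Figure~\ref{fig:mbased-packing-example}) exhibits exactly such a nested family with non-trivial walls. If the walls really were all empty, the family would reduce to a subpartition of disjoint subatoms and \eqref{dknjkbduzv} would follow from \eqref{mbpsaoricond} in one line, but that special case does not cover the condition you must verify.

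The paper closes this gap with an induction over the laminar structure: for each ${\sf Y}$ it considers the sub-family $\mathcal{P}_{\sf Y}$ of bisets nested in $Y_W$ and its maximal members $\mathcal{Q}_{\sf Y}$, whose outer sets are pairwise disjoint; applying \eqref{mbpsaoricond} to those disjoint outer sets and telescoping against the inductive bound $\sum_{{\sf Z}\in\mathcal{P}_{\sf Y}}{\sf f}({\sf Z})\le r_{\sf M}(S_{Y_W})$ (inequality \eqref{hhjc}), it finishes by adjoining the dummy biset $(V,\emptyset)$. Some argument of this kind is unavoidable, and your proposed ``decompose a subpartition into subatom pieces and match inequalities one-to-one'' does not supply it. A secondary, harmless point: you also aim to prove the converse implication between the two sets of conditions, which is not needed here, since the paper only requires sufficiency (the necessity of \eqref{mbpsaoricond} is obtained directly).
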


 \begin{proof}
 Let $(D,S,{\sf M})$ be an instance of Theorem \ref{mbpsaori} satisfying \eqref{mbpsaoricond}. 
 Let $\ell=\ell'=r_{{\sf M}}(S)$. The condition \eqref{ellell} trivially holds.
 Then, by the submodularity  of $r_{\sf M},$ we have $\sum_{v\in V}r_{\sf M}(S_v)\ge r_{\sf M}(\bigcup_{v\in V}S_v)=r_{\sf M}(S),$ so \eqref{jbuoouou} is satisfied. Applying \eqref{mbpsaoricond} for every subatom as a subpartition we get that \eqref{kjbvgc} holds.
 To show that \eqref{dknjkbduzv} also holds, let  ${\cal P}$ be an OW laminar  biset family of subatoms.  
\noindent  For all ${\sf Y}\in\mathcal{P},$ let 
\begin{eqnarray*}
\text{{\boldmath$\mathcal{P}_{\sf Y}$}} &=&\{{\sf Z}\in\mathcal{P}-{\sf Y}: Z_O\subseteq Y_W\},\\
\text{{\boldmath$\mathcal{Q}_{\sf Y}$}} &=&\{{\sf Z}\in\mathcal{P}_{\sf Y}: \text{ there exists no } {\sf Z}'\in\mathcal{P}_{\sf Y}-{\sf Z}\text{ such that } Z_O\subseteq Z'_W\}.
\end{eqnarray*}
Note  that $\mathcal{P}_{\sf Y}=\bigcup_{Z\in \mathcal{Q}_{\sf Y}}(Z\cup\mathcal{P}_{\sf Z})$ and the outer sets of the bisets in $\mathcal{Q}_{\sf Y}$ are mutually disjoint. Indeed, if ${\sf Z}^1,{\sf Z}^2\in\mathcal{Q}_{\sf Y}$ and $Z^1_O\cap Z^2_O\neq\emptyset$, then, since ${\cal P}$ is OW laminar, we have that $Z^1_O\subseteq Z^2_W$ or $Z^2_O\subseteq Z^1_W$, so either $Z^1\notin\mathcal{Q}_{\sf Y}$ or $Z^2\notin\mathcal{Q}_{\sf Y}$ which is a contradiction.
\medskip

\noindent Let us introduce the following biset function:  
{\boldmath${\sf f}$}$({\sf X})=r_{\sf M}(S)-r_{\sf M}(S_{X_W})-d_A^-(X_O)$ for all ${\sf X}\in\mathcal{P}.$
We now  claim that we have 
\begin{eqnarray}\label{hhjc}
\sum_{{\sf Z}\in\mathcal{P}_{\sf Y}}{\sf f}({\sf Z})\le r_{\sf M}(S_{Y_W}) \text{  for every }{\sf Y}\in\mathcal{P}.
\end{eqnarray}
 If $\mathcal{P}_{\sf Y}=\emptyset,$ then, by the non-negativity of $r_{\sf M},$ the \eqref{hhjc} holds. Suppose that \eqref{hhjc} holds for every ${\sf Z}\in\mathcal{P}_{\sf Y}$. We show that it also holds for ${\sf Y}.$ By the hypothesis, the definition of ${\sf f}$, since the outer sets of the bisets in $\mathcal{Q}_{\sf Y}$ are mutually disjoint, by \eqref{mbpsaoricond} applied for these sets, since ${\cal P}$ is OW laminar and by the monotonicity of $r_{\sf M},$ we have \eqref{hhjc} for ${\sf Y}:$
\begin{eqnarray*}
\sum_{{\sf Z}\in\mathcal{P}_{\sf Y}}{\sf f}({\sf Z}) & = & \sum_{{\sf Z}\in\mathcal{Q}_{\sf Y}}({\sf f}({\sf Z})+\sum_{{\sf X}\in\mathcal{P}_{\sf Z}}{\sf f}({\sf X}))\le\sum_{{\sf Z}\in\mathcal{Q}_{\sf Y}}({\sf f}({\sf Z})+r_{\sf M}(S_{Z_W})) \\ & = & \sum_{{\sf Z}\in\mathcal{Q}_{\sf Y}}(r_{\sf M}(S)-d_A^-(Z_O))\le r_{\sf M}(S_{\bigcup_{{\sf Z}\in\mathcal{Q}_{\sf Y}}Z_O})\le r_{\sf M}(S_{Y_W}).
\end{eqnarray*}
Let ${\cal P}'={\cal P}\cup \{{\sf V}\}$ where ${\sf V}=(V,\emptyset)$. Then ${\cal P}'$ is also an OW laminar  biset family of $V$. Since the above arguments also work for ${\cal P}'$, we have, by \eqref{hhjc} for ${\sf V},$ $\sum_{{\sf X}\in\mathcal{P}}{\sf f}({\sf X})=\sum_{{\sf X}\in\mathcal{P}'_{{\sf V}}}{\sf f}({\sf X})\le r_{\sf M}(S_{V})=r_{\sf M}(S)$, so \eqref{dknjkbduzv} holds.
Hence, by Theorem \ref{bboboibo}, there exists an ${\sf M}$-based packing of $s$-arborescences $(s\in S^*\subseteq S)$ in $D$ with $|S^*|=r_{\sf M}(S)$, and hence each arborescence in the packing is spanning.
 \end{proof}

\begin{cl}
Theorem \ref{bboboibo} implies Theorem \ref{vouyfy}.
\end{cl}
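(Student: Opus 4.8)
The plan is to derive \Cref{vouyfy} from \Cref{bboboibo} by choosing the lower and upper bounds to be the exact number of arborescences forced by any decomposition. The starting point is a counting identity for ${\sf M}$-based packings. If $\mathcal{B}$ is an ${\sf M}$-based packing in $D$, then for every vertex $v$ the root multiset $R^{\mathcal{B}}_v$ is a basis of ${\sf M}$ and hence has size $r_{\sf M}(S)$; equivalently, $v$ lies in exactly $r_{\sf M}(S)$ of the arborescences of $\mathcal{B}$. Since an arborescence on vertex set $U$ has exactly $|U|-1$ arcs, the total number of arcs used by $\mathcal{B}$ is $\sum_{B\in\mathcal{B}}(|V(B)|-1)=r_{\sf M}(S)|V|-|\mathcal{B}|$. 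As the arborescences are arc-disjoint, this is at most $|A|$, with equality exactly when $\mathcal{B}$ covers every arc, i.e.\ when $\mathcal{B}$ is a decomposition of $A$. Consequently, a decomposition of $A$ into an ${\sf M}$-based packing exists if and only if there is an ${\sf M}$-based packing of arborescences in $D$ using exactly $k:=r_{\sf M}(S)|V|-|A|$ arborescences, that is, an ${\sf M}$-based $(k,k)$-limited packing.

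I would then invoke \Cref{bboboibo} with $\ell=\ell'=k$ and verify that its hypotheses reduce to precisely \eqref{kjbvgc} and \eqref{ljhfygdtu}. With this choice, \eqref{ellell} reads $k\le k$ and is vacuous, and \eqref{dknjkbduzv} is literally \eqref{ljhfygdtu}. The only hypothesis of \Cref{bboboibo} not appearing in \Cref{vouyfy} is \eqref{jbuoouou}, namely $\sum_{v\in V}r_{\sf M}(S_v)\ge k$; the key point is that it is automatically implied by \eqref{kjbvgc}. Indeed, each singleton $\{v\}$ is a subatom, so \eqref{kjbvgc} gives $r_{\sf M}(S_v)+d_A^-(\{v\})\ge r_{\sf M}(S)$, and summing over $v\in V$ together with $\sum_{v\in V}d_A^-(\{v\})\le|A|$ yields $\sum_{v\in V}r_{\sf M}(S_v)\ge r_{\sf M}(S)|V|-|A|=k$, as required.

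Combining these observations, the conditions of \Cref{bboboibo} for $\ell=\ell'=k$ are equivalent to \eqref{kjbvgc} together with \eqref{ljhfygdtu}, which are exactly the conditions of \Cref{vouyfy}; applying the existence equivalence from the first paragraph finishes the proof. I expect the main substantive step to be the verification that \eqref{jbuoouou} is redundant, so that the $(\ell,\ell')$-limited conditions collapse cleanly onto the two conditions of \Cref{vouyfy}; everything else is a direct translation of the counting identity. One boundary remark worth making is that taking the empty family in \eqref{ljhfygdtu} forces $k\ge 0$, so that $\ell=\ell'=k$ is a legitimate choice in $\mathbb{Z}_+$, and that the inequality $\sum_{v\in V}d_A^-(\{v\})\le |A|$ (rather than equality) is all that is needed, which keeps the argument valid even in the presence of loops.
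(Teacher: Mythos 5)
Your proof is correct and follows essentially the same route as the paper: take $\ell=\ell'=r_{\sf M}(S)|V|-|A|$, observe that \eqref{ellell} is vacuous, that \eqref{dknjkbduzv} becomes \eqref{ljhfygdtu}, that \eqref{jbuoouou} follows from \eqref{kjbvgc} applied to singletons, and use the arc-counting identity to see that a packing with exactly that many arborescences must use all of $A$. The only (harmless) difference is that you package both directions into a single equivalence between decompositions and $(k,k)$-limited packings, whereas the paper derives the necessity of \eqref{kjbvgc} via Theorem~\ref{thmddgnsz}.
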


  \begin{proof}
Let $(D=(V,A),S,{\sf M})$ be an instance of Theorem \ref{vouyfy}. To see the {\bf necessity}, suppose that there exists a decomposition of $A$ into an ${\sf M}$-based packing of arborescences  in $D$ with root set $S^*.$ Then, by Theorem \ref{thmddgnsz}, \eqref{ddgnszcond} and hence \eqref{kjbvgc} holds. Since $|S^*|=\sum_{v\in V}|S^*_v|=\sum_{v\in V}(r_{\sf M}(S)-d_A^-(v))=r_{\sf M}(S)|V|-|A|$, by  Theorem \ref{bboboibo},
 \eqref{dknjkbduzv} holds for $\ell'=r_{\sf M}(S)|V|-|A|$ and hence \eqref{ljhfygdtu} holds.

To see the {\bf sufficiency}, suppose that \eqref{kjbvgc} and \eqref{ljhfygdtu} hold. Let $\ell=\ell'=r_{\sf M}(S)|V|-|A|.$ Then  conditions \eqref{ellell}, \eqref{kjbvgc} and \eqref {dknjkbduzv}  hold. By \eqref{kjbvgc} applied for all $v\in V,$ we get that $\sum_{v\in V}r_{\sf M}(S_v)\ge\sum_{v\in V}(r_{\sf M}(S)-d_A^-(v))=r_{\sf M}(S)|V|-|A|,$ so \eqref{jbuoouou} also holds. Hence, by Theorem \ref{bboboibo}, there exists an ${\sf M}$-based  packing of  arborescences  in $D$ with arc set $B$ and root set $S^*$ such that $|S^*|=r_{\sf M}(S)|V|-|A|$. Since  $|B|=\sum_{v\in V}d_B^-(v)=\sum_{v\in V}(r_{\sf M}(S)-|S^*_v|)=r_{\sf M}(S)|V|-|S^*|=|A|$, we have in fact a decomposition of $A,$ and the proof of Theorem \ref{vouyfy} is complete.
 \end{proof}

\subsection{Matroid-reachability-based packing of arborescences}\label{mrbpoa}

In this section we present results on matroid-reachability-based packing of arborescences. 
\medskip

We need the following two functions which are non-zero only on  petals and petal bisets.
Let $D=(V,A)$ be a  digraph, $S$ a multiset of vertices in $V$ and ${\sf M}$ a matroid on $S$ with rank function $r_{\sf M}$. Recall that $\hat{\mathcal{Z}}$ is the set of petals and we have $P_{C_Z}=P_Z$ for all $Z\in\hat{\mathcal{Z}}$; $\hat{\mathcal{Z}}_{\sf b}$ is the set of petal bisets  and we have   $P_{C_{\sf X}}=P_{X_O}$  for all ${\sf X}\in\hat{\mathcal{Z}}_{\sf b}.$
%
Let the set function {\boldmath$\hat{p}$} and the biset function {\boldmath$\hat{\sf p}$} on $V$ be defined as follows: 
	\[
		\hat{p}(Z)= 
		\begin{cases}
			r_{{\sf M}}(S_{P_{C_Z}})-r_{{\sf M}}(S_Z)	&	Z\in \hat{\mathcal{Z}},\\
			0			&	\text{otherwise,}
		\end{cases}
\hskip 1truecm 		\hat{\sf p}({\sf X})= 
		\begin{cases}
			r_{{\sf M}}(S_{P_{C_{\sf X}}})-r_{{\sf M}}(S_{X_O})	&	{\sf X}\in \hat{\mathcal{Z}}_{\sf b},\\
			0			&	\text{otherwise.}
		\end{cases}
	\] 
%
	
The following properties of these functions will be crucial.
	
\begin{cl}\label{psupmod} The following hold.

 (a) $\hat p$  is a  supermodular function on  core-intersecting sets of $\hat{\mathcal{Z}}$.
 
 (b) $\hat{\sf p}$ is a positively intersecting supermodular biset function on $V.$
\end{cl}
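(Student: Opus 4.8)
I would prove both parts at once, since they rest on a single structural observation. First, I would reduce each to the same core situation. For part (b), suppose ${\sf X},{\sf Y}$ are bisets with $\hat{\sf p}({\sf X}),\hat{\sf p}({\sf Y})>0$ and $X_I\cap Y_I\neq\emptyset$. Because $\hat{\sf p}$ vanishes off $\hat{\mathcal{Z}}_{\sf b}$, positivity forces both to be petal bisets, say ${\sf X}={\sf X}_Z$ and ${\sf Y}={\sf X}_{Z'}$; then $X_I=C_Z\cap Z$ and $Y_I=C_{Z'}\cap Z'$ are exactly the cores, so the hypothesis $X_I\cap Y_I\neq\emptyset$ says precisely that $Z$ and $Z'$ are core-intersecting. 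For part (a) core-intersecting $Z,Z'\in\hat{\mathcal{Z}}$ are given directly. In either case the cores meet, so the atoms $C_Z,C_{Z'}$ intersect; as distinct atoms are disjoint, $C_Z=C_{Z'}=:C$.

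The heart of the argument is the lemma that $Z\cap Z'$ and $Z\cup Z'$ are again petals with atom $C$. The conditions $(Z\cap Z')\cap C\neq\emptyset$ (the cores meet inside $C$), $(Z\cup Z')\cap C\neq\emptyset$, and $Z\cap Z',Z\cup Z'\subseteq P_C$ are immediate from the petal properties of $Z$ and $Z'$. The only point that takes work is the wall condition $d^-_A(\,\cdot\,-C)=0$, and this is where I expect the whole difficulty to lie. Writing $(Z\cap Z')-C=(Z-C)\cap(Z'-C)$ and $(Z\cup Z')-C=(Z-C)\cup(Z'-C)$, submodularity of the in-degree function gives
\[
d^-_A\big((Z\cap Z')-C\big)+d^-_A\big((Z\cup Z')-C\big)\;\le\;d^-_A(Z-C)+d^-_A(Z'-C)\;=\;0,
\]
and non-negativity of in-degrees forces both terms to vanish. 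By the uniqueness of the atom of a petal we then get $C_{Z\cap Z'}=C_{Z\cup Z'}=C$, hence $P_{C_{Z\cap Z'}}=P_{C_{Z\cup Z'}}=P_C$, and a short computation shows that the petal bisets of $Z\cap Z'$ and $Z\cup Z'$ are exactly ${\sf X}\cap{\sf Y}=(Z\cap Z',\,C\cap Z\cap Z')$ and ${\sf X}\cup{\sf Y}=(Z\cup Z',\,C\cap(Z\cup Z'))$.

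Finally I would substitute. Using $P_{C_Z}=P_{C_{Z'}}=P_C$ throughout, each of the four values equals $r_{\sf M}(S_{P_C})$ minus the rank of $S$ restricted to $Z$, $Z'$, $Z\cap Z'$, $Z\cup Z'$, respectively. Since $S_{Z\cap Z'}=S_Z\cap S_{Z'}$ and $S_{Z\cup Z'}=S_Z\cup S_{Z'}$ as subsets of the ground set $S$, submodularity of $r_{\sf M}$ yields
\[
r_{\sf M}(S_{Z\cap Z'})+r_{\sf M}(S_{Z\cup Z'})\;\le\;r_{\sf M}(S_Z)+r_{\sf M}(S_{Z'}),
\]
which rearranges immediately into both the set inequality $\hat p(Z)+\hat p(Z')\le\hat p(Z\cap Z')+\hat p(Z\cup Z')$ of (a) and the biset inequality \eqref{supermodbiset} for $\hat{\sf p}$ of (b). Thus once the petal-stability lemma is in hand, both statements follow from a single application of the submodularity of $r_{\sf M}$; the rest of the proof is routine bookkeeping.
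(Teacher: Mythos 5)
Your proof is correct and follows essentially the same route as the paper's: identify the common atom $C$ of the two core-intersecting petals, show that their intersection and union are again petals with atom $C$ (so all four $\hat p$-values share the term $r_{\sf M}(S_{P_C})$), and conclude by submodularity of $r_{\sf M}$. You merely flesh out two points the paper leaves implicit — the wall condition for $Z\cap Z'$ and $Z\cup Z'$, and the reduction of the biset statement (b) to the core-intersecting petal situation — which is fine but not a different argument.
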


\begin{proof}
(a) Let $Z$ and $Z'$ be  core-intersecting sets in $\hat{\mathcal{Z}}$.  It follows that there exists an atom $C$ of $D$ such that 
\begin{eqnarray}
\hat{p}(Z)&=r_{{\sf M}}(S_{P_C})-r_{{\sf M}}(S_{Z}),&\label{supmod1}\\
\hat{p}(Z')&=r_{{\sf M}}(S_{P_C})-r_{{\sf M}}(S_{Z'}),&\label{supmod2}\\
\emptyset\neq Z\cap C, & Z\subseteq P_C, &d_A^-(Z-C)=0,\label{supmod5} \\
\emptyset\neq Z'\cap C, & Z'\subseteq P_C, &d_A^-(Z'-C)=0. \label{supmod6}
\end{eqnarray}
Then, by \eqref{supmod5} and \eqref{supmod6}, we have 
\begin{eqnarray}
\emptyset\neq (Z\cap Z')\cap C,& Z\cap Z'\subseteq P_C, &d_A^-((Z\cap Z')-C)=0,\label{supmod7}\\ 
\emptyset\neq (Z\cup Z')\cap C,& Z\cup Z'\subseteq P_C, &d_A^-((Z\cup Z')-C)=0.\label{supmod8}
\end{eqnarray}
By \eqref{supmod7} and \eqref{supmod8}, we have $Z\cap Z', Z\cup Z'\in \hat{\mathcal{Z}}$, so
\begin{eqnarray}
\hat{p}(Z\cap Z')&=r_{{\sf M}}(S_{P_C})-r_{{\sf M}}(S_{Z\cap Z'}),&\label{supmod3}\\
\hat{p}(Z\cup Z')&=r_{{\sf M}}(S_{P_C})-r_{{\sf M}}(S_{Z\cup Z'}).&\label{supmod4}
\end{eqnarray}
Since $r_{{\sf M}}(S_Z)+r_{{\sf M}}(S_{Z'})\ge r_{{\sf M}}(S_{Z\cap Z'})+r_{{\sf M}}(S_{Z\cup Z'})
$, we get, by \eqref{supmod1},\eqref{supmod2},\eqref{supmod3},\eqref{supmod4}, that  $\hat{p}(Z)+\hat{p}(Z')\le \hat{p}(Z\cap Z')+\hat{p}(Z\cup Z'),$ and (a) follows.
\medskip

(b) The same proof as in (a) also works for (b).
\end{proof}

A common generalization of Theorems \ref{reach1} and \ref{thmddgnsz} was given by Kir\'aly \cite{cskir} where he characterized the existence of a complete matroid-reachability-based packing of arborescences.

\begin{thm}[Kir\'aly \cite{cskir}] \label{thmCsaba}
Let $D=(V,A)$ be a  digraph, $S$ a multiset of vertices in $V$, and ${\sf M}=(S,\mathcal{I}_{\sf M})$ a matroid with rank function $r_{\sf M}$. 
There exists a complete {\sf M}-reachability-based packing of arborescences in $D$  if and only if   \eqref{matcondori1} holds and 
	\begin{eqnarray}
		d^-_{A}(Z)\geq r_{{\sf M}}(S_{P_Z})-r_{{\sf M}}(S_Z)&& \text{ for every } Z\subseteq V.\label{csabicond}
	\end{eqnarray}
\end{thm}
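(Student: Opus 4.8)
The plan is to prove that \eqref{matcondori1} and \eqref{csabicond} are necessary by a short rank-counting argument, and to prove sufficiency by growing all $|S|$ arborescences simultaneously, with the supermodularity of $\hat p$ from \Cref{psupmod} driving the crucial augmentation step. For necessity, let $\mathcal{B}=\{T_s:s\in S\}$ be a complete ${\sf M}$-reachability-based packing, where $T_s$ is the arborescence rooted at $s$. Since $s\in V(T_s)$, every root in $S_v$ lies in $R^{\mathcal{B}}_v$; as $R^{\mathcal{B}}_v$ is independent this already gives $S_v\in\mathcal{I}_{\sf M}$, i.e.\ \eqref{matcondori1}. For \eqref{csabicond}, fix $Z\subseteq V$ and let $F$ be the set of arborescences of $\mathcal{B}$ whose root lies outside $Z$ but which meet $Z$; their roots form a set $R_F\subseteq S_{P_Z}\setminus S_Z$ with $|F|=|R_F|$, and since the arborescences are arc-disjoint and each uses at least one arc entering $Z$, we have $d^-_A(Z)\ge|F|$. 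I would then check that $R_F\cup S_Z$ spans $S_{P_Z}$: for any $s\in S_{P_Z}$ choose $v\in Z$ reachable from $s$; as $R^{\mathcal{B}}_v$ is a basis of $S_{P_v}\ni s$ and $R^{\mathcal{B}}_v\subseteq R_F\cup S_Z$ (a root of an arborescence containing $v$ is either in $S_Z$ or contributes to $F$), adding $s$ does not raise the rank of $R_F\cup S_Z$. Hence $r_{\sf M}(S_{P_Z})=r_{\sf M}(R_F\cup S_Z)\le|F|+r_{\sf M}(S_Z)\le d^-_A(Z)+r_{\sf M}(S_Z)$, which is \eqref{csabicond}.

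For sufficiency I would maintain arc-disjoint subtrees $T_s$ ($s\in S$), each rooted at $s$, together with the invariant that for every $v$ the multiset $I_v=\{s:v\in V(T_s)\}$ is independent in ${\sf M}$. Starting from the singletons $T_s=(\{s\},\emptyset)$, I grow the subtrees until each $I_v$ is a basis of $S_{P_v}$, at which point the $T_s$ are the desired complete packing. While some $v$ is deficient, i.e.\ $r_{\sf M}(I_v)<r_{\sf M}(S_{P_v})$, there is a root $s\in S_{P_v}$ with $I_v\cup\{s\}$ independent whose tree $T_s$ does not yet contain $v$; I would extend $T_s$ towards $v$ one unused arc $xy$ at a time, choosing $x\in V(T_s)$ and $y\notin V(T_s)$ with $I_y\cup\{s\}$ independent, so that the invariant persists.

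The hard part is guaranteeing that such an augmenting arc always exists. If the growth of $T_s$ got stuck, the vertices reachable from $T_s$ through usable arcs would form a set $Z$ that is entered by no arc still available for $s$ yet carries positive deficiency $r_{\sf M}(S_{P_Z})-r_{\sf M}(S_Z)$; by standard reachability considerations $Z$ may be taken to be a petal, so this deficiency equals $\hat p(Z)$. \Cref{psupmod}(a) shows that $\hat p$ is supermodular on core-intersecting petals, so the family of tight petals (those whose entering arcs exactly meet their deficiency) can be uncrossed into a core-laminar family, and a minimal blocking petal would then satisfy $d^-_A(Z)<r_{\sf M}(S_{P_Z})-r_{\sf M}(S_Z)$, contradicting \eqref{csabicond}. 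Thus the augmentation never blocks and terminates with the required packing. I expect the delicate points to be the simultaneous maintenance of matroid independence at every vertex and the exact bookkeeping of which arcs remain usable for a given root; reducing the obstruction to a single petal and invoking the supermodularity of \Cref{psupmod} is precisely what makes the uncrossing go through.
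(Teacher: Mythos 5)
A preliminary remark on the comparison you asked for: the paper contains no proof of \Cref{thmCsaba} — it is quoted from Kir\'aly \cite{cskir}, and the only related argument in the paper is the derivation of the equivalent \Cref{thmGY} from \Cref{bboboiboreach}, which cannot stand as an independent proof because \Cref{bboboiboreach} is itself proved using \Cref{thmGY}. So your proposal has to be judged on its own merits. Your necessity argument is correct and complete: $S_v\subseteq R^{\mathcal{B}}_v$ gives \eqref{matcondori1}, and the chain $r_{\sf M}(S_{P_Z})=r_{\sf M}(R_F\cup S_Z)\le |R_F|+r_{\sf M}(S_Z)\le d^-_A(Z)+r_{\sf M}(S_Z)$ is exactly the standard counting argument.

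The sufficiency half has a genuine gap. The invariant you maintain — pointwise independence of $I_w$ for every $w$ — is too weak, and the contradiction you aim for does not follow from it. When the growth of $T_s$ towards $v$ gets stuck, the obstruction set $Z$ you produce is entered by no arc that is still \emph{unused and admissible for $s$}; but \eqref{csabicond} bounds the deficiency of $Z$ by $d^-_A(Z)$, the number of \emph{all} arcs of $A$ entering $Z$, most of which may already have been consumed by other arborescences through earlier, badly chosen augmentations. To convert a stuck configuration into a violation of \eqref{csabicond} you would need a residual version of the condition — roughly, that the number of unused arcs entering $Z$ plus the number of arborescences already entering $Z$ whose roots contribute to spanning $S_{P_Z}$ beyond $S_Z$ is at least $r_{\sf M}(S_{P_Z})-r_{\sf M}(S_Z)$ — to be preserved after every augmentation, and preserving it is precisely what forces the augmenting arc to be chosen so as not to enter any maximal tight set unnecessarily. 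Your sketch never formulates this residual condition, and the rule ``take any unused $xy$ with $I_y\cup\{s\}$ independent'' will in general destroy it. A second, related defect: every intermediate vertex $y$ on the route from $T_s$ to $v$ must be able to accept $s$, i.e.\ $|I_y|<r_{\sf M}(S_{P_y})$, and nothing in your setup prevents every path from $V(T_s)$ to $v$ from being blocked by vertices whose $I_y$ is already a basis of $S_{P_y}$; ruling this out again requires the residual cut condition, not pointwise independence. The appeal to \Cref{psupmod}(a) and uncrossing is the right instinct, but it is applied to the wrong quantity — $\hat p$ measured against $d^-_A$ in the original digraph rather than against the residual capacities — so the step ``a minimal blocking petal contradicts \eqref{csabicond}'' fails as stated.
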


For the free matroid, Theorem \ref{thmCsaba} reduces to Theorem \ref{reach1}.
If $r_{\sf M}(S_{P_v})=r_{\sf M}(S)$ for all $v\in V,$ then  Theorem \ref{thmCsaba} reduces to Theorem \ref{thmddgnsz}.
\medskip

Another characterization of the existence of a  matroid-reachability-based packing of arborescences was given by Gao and Yang \cite{GY}. 

\begin{thm}[Gao, Yang \cite{GY}] \label{thmGY}
Let $D=(V,A)$ be a  digraph, $S$ a multiset of vertices in $V$, and ${\sf M}=(S,r_{\sf M})$ a matroid. 

(a) There exists an {\sf M}-reachability-based packing of arborescences in $D$  if and only if   
	\begin{eqnarray}\label{GYcond}
\hskip .95truecm\	{\sf d}_A^-({\sf X}) 	&\ge	&	\hat{\sf p}({\sf X}) \hskip .44truecm\text{ for every  biset {\sf X} on $V$,}  
	\end{eqnarray}
or equivalently
	\begin{eqnarray}\label{GYcond2}
		d_A^-(Z) 	&	\ge	&	\hat{p}({Z}) \hskip .44truecm\text{ for every  $Z\in\hat{\mathcal{Z}}$.}  
	\end{eqnarray}
	
(b) There exists a complete {\sf M}-reachability-based packing of arborescences in $D$  if and only if  \eqref{matcondori1} and \eqref{GYcond2}  hold.
\end{thm}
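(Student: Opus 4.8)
The plan is to derive both parts from Kir\'aly's \Cref{thmCsaba}, after first reconciling the three conditions appearing in the statement. I would begin with the purely formal equivalence \eqref{GYcond}$\Leftrightarrow$\eqref{GYcond2}. Since $\hat{\sf p}$ vanishes off $\hat{\mathcal{Z}}_{\sf b}$, the inequality in \eqref{GYcond} is trivially satisfied for every biset that is not a petal biset, so \eqref{GYcond} reduces to its instances on petal bisets ${\sf X}_Z=(Z,C_Z\cap Z)$. For such a biset the defining property $d_A^-(Z-C_Z)=0$ of a petal forces every arc entering $Z$ to have its head in the core $C_Z\cap Z$, whence ${\sf d}_A^-({\sf X}_Z)=d_A^-(Z)$; together with $\hat{\sf p}({\sf X}_Z)=\hat{p}(Z)$ (immediate from the definitions and $P_{C_{\sf X}}=P_{X_O}=P_Z$) this identifies the instance of \eqref{GYcond} at ${\sf X}_Z$ with the instance of \eqref{GYcond2} at $Z$. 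Hence \eqref{GYcond} and \eqref{GYcond2} are the same system, and necessity and sufficiency may be argued in whichever form is convenient.

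The heart of the proof is the equivalence of \eqref{GYcond2} with Kir\'aly's condition \eqref{csabicond}, namely $d_A^-(Z)\ge r_{\sf M}(S_{P_Z})-r_{\sf M}(S_Z)$ for every $Z\subseteq V$. One direction is immediate: every petal $Z$ satisfies $P_Z=P_{C_Z}$, so \eqref{csabicond} at $Z$ reads exactly $d_A^-(Z)\ge\hat{p}(Z)$. For the converse I first note that \eqref{csabicond} holds for every subatom $X$ for free, since a subatom is a petal with petal biset $(X,X)$, and evaluating \eqref{GYcond} at $(X,X)$ gives $d_A^-(X)={\sf d}_A^-((X,X))\ge\hat{\sf p}((X,X))=r_{\sf M}(S_{P_X})-r_{\sf M}(S_X)$. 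It then remains to reduce an arbitrary set $W$ to subatoms. I would take the atoms of $D$ in a topological order and induct on the number of atoms meeting $W$, peeling off the sink-most touched atom $C$ and applying the subatom inequality to $X=W\cap C$ and the inductive hypothesis to $W\setminus C$; the arcs between $W\setminus C$ and $X$ run only from $W\setminus C$ into $X$, which lets me relate $d_A^-(W)$ to $d_A^-(X)$ and $d_A^-(W\setminus C)$, while $P_W=P_C\cup P_{W\setminus C}$ and submodularity of $Z\mapsto r_{\sf M}(S_{P_Z})$ govern the reachable-set term.

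I expect this reduction to be the main obstacle: the term $r_{\sf M}(S_{P_W})$ varies with $W$ and does not uncross against $d_A^-$ and $r_{\sf M}(S_W)$ in the naive way, so the cross terms produced by the arcs from $W\setminus C$ into $X$ and by the rank overlap of $S_X$ and $S_{W\setminus C}$ must be absorbed. This is precisely where I would invoke the supermodularity supplied by \Cref{psupmod}: passing to the biset formulation and uncrossing a minimal counterexample against the positively intersecting supermodular $\hat{\sf p}$, using the topological fact that $P_C\subseteq P_{C'}$ whenever $C$ reaches $C'$ to keep the reachable sets under control. Once \eqref{csabicond}$\Leftrightarrow$\eqref{GYcond2} is in hand, part (b) is immediate from \Cref{thmCsaba}, as both characterise the existence of a complete ${\sf M}$-reachability-based packing by \eqref{matcondori1} together with their (now equivalent) conditions.

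For part (a) the necessity is a short counting argument: given an ${\sf M}$-reachability-based packing and a petal $Z$ with a core vertex $v$, the roots through $v$ form a basis of $S_{P_v}=S_{P_Z}$, so there are $r_{\sf M}(S_{P_Z})$ of them; those rooted inside $Z$ form an independent subset of $S_Z$, hence number at most $r_{\sf M}(S_Z)$, and each remaining arborescence crosses into $Z$ along a distinct arc of $\delta_A^-(Z)$, hence at most $d_A^-(Z)$ of them, giving \eqref{GYcond2}. For the sufficiency I would reduce to part (b): repeatedly delete from the ground set an element $s\in S_v$ lying in the span of $S_v\setminus s$, until every $S_v$ is independent. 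This enforces \eqref{matcondori1} for the restricted matroid ${\sf M}'$, and each deletion leaves $d_A^-$ untouched and preserves every relevant value $r_{\sf M}(S_{P_Z})$ and $r_{\sf M}(S_Z)$ (the deleted element is spanned locally at $v$, and $v$ lies in $Z$, resp. $P_Z$, whenever $s$ affects $S_Z$, resp. $S_{P_Z}$), so \eqref{GYcond2} is preserved. Part (b) then yields a complete ${\sf M}'$-reachability-based packing, and since the ranks are unchanged each root set $R_v$ remains a basis of $S_{P_v}$ in ${\sf M}$; that is, the packing is ${\sf M}$-reachability-based for the original matroid.
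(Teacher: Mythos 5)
Your overall architecture is reasonable, but it rests on a step that is not actually proved, and that step is the whole difficulty. Before addressing it, note that the paper does not prove \Cref{thmGY} at all: it is imported from Gao and Yang \cite{GY}, the only argument attached to it in the paper is the (logically downstream) derivation from \Cref{bboboiboreach}, and the equivalence with Kir\'aly's \Cref{thmCsaba} that you are trying to establish in-line is explicitly delegated to \cite{pmh}. Several of your ingredients are fine: the identification of \eqref{GYcond} with \eqref{GYcond2} (using $d_A^-(Z-C_Z)=0$ to see that every arc entering a petal enters its core), the necessity counting for part (a), and the reduction of (a) to (b) by repeatedly deleting an element $s\in S_v$ spanned by $S_v-s$ (which indeed preserves $r_{\sf M}(S_Z)$ and $r_{\sf M}(S_{P_Z})$ for every $Z$) are all correct.

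The gap is your claim that \eqref{GYcond2} implies \eqref{csabicond} for every $Z\subseteq V$. The statement is true, but only a posteriori: \eqref{csabicond} is necessary for any ${\sf M}$-reachability-based packing (complete or not), so it follows from the packing whose existence you are trying to prove --- which is circular here. Your proposed direct argument does not close it. Peeling off the sink-most atom $C$ gives the identity $d_A^-(W)=d_A^-(W-C)+d_A^-(W\cap C)-e$, where $e$ counts the arcs from $W-C$ into $W\cap C$; combining the inductive hypothesis for $W-C$ with the subatom inequality for $W\cap C$, submodularity of $Z\mapsto r_{\sf M}(S_{P_Z})$ and subadditivity of $r_{\sf M}$ leaves you needing $r_{\sf M}(S_{P_{W-C}\cap P_C})+r_{\sf M}(S_W)\ge r_{\sf M}(S_{W-C})+r_{\sf M}(S_{W\cap C})+e$, which is false in general (take two vertices joined by many parallel arcs and no matroid elements nearby: the target inequality for $W$ holds trivially, but this gluing inequality fails, so the induction breaks). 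Nor can Claim \ref{psupmod} rescue you: it gives supermodularity of $\hat p$ only on pairs of \emph{core-intersecting petals}, while the set $W$ you would uncross is not a petal and has $\hat p(W)=0$ by definition, so ``uncrossing a minimal counterexample against $\hat{\sf p}$'' has nothing to act on. A correct direct proof must instead enlarge $W\cap C$ to a genuine petal, e.g.\ $(W\cap C)\cup P_T$ with $T$ the set of tails of the internal arcs (using that $d_A^-(P_T)=0$ and $P_T\cap C=\emptyset$ by the topological order), and then control the rank of $S_{P_T}$; this bookkeeping is exactly the non-routine content of the equivalence proved in \cite{pmh}, and as written your proposal does not supply it.
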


In \cite{pmh} it was proved that Theorems  \ref{thmCsaba} and \ref{thmGY} are equivalent.

\subsubsection{New results on matroid-reachability-based packing of arborescences}

In this subsection we provide our main results. 
\medskip

We start with the following polyhedral result.
\begin{thm}\label{mainTDI}
Let $D=(V,A)$ be a  digraph, $S$ a multiset of vertices in $V$, and ${\sf M}=(S,r_{\sf M})$ a matroid such that \eqref{GYcond2} holds.  Then the   system defined by \eqref{tdicond3} and \eqref{tdicondFJ2} is TDI.
\begin{eqnarray}
	x(\delta_A^-(Z))&\ge& \hat p(Z) \hskip 1truecm\text{ for every } Z\in\hat{\mathcal{Z}},\label{tdicond3} \\
	\mathbb{1}	\hskip .25truecm 	\ge 	\hskip .25truecm	x &\ge & \mathbb{0}.\label{tdicondFJ2}
\end{eqnarray}
\end{thm}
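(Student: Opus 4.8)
The plan is to obtain the TDIness of \eqref{tdicond3}--\eqref{tdicondFJ2} by first establishing the TDIness of a companion \emph{biset} system through Frank's Theorem \ref{FJTDI}, and then transferring it to the \emph{set} system via Schrijver's Theorem \ref{tdiclaim}.

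Concretely, I would introduce the biset system
\begin{eqnarray*}
x(\delta_A^-({\sf X})) &\ge& \hat{\sf p}({\sf X}) \hskip 1truecm \text{for every biset } {\sf X} \text{ on } V,\\
\mathbb{1} \,\ge\, x &\ge& \mathbb{0}.
\end{eqnarray*}
By Claim \ref{psupmod}(b) the biset function $\hat{\sf p}$ is positively intersecting supermodular, and since the hypothesis \eqref{GYcond2} is equivalent to \eqref{GYcond} by Theorem \ref{thmGY}(a), we have ${\sf d}_A^-({\sf X}) \ge \hat{\sf p}({\sf X})$ for every biset ${\sf X}$ on $V$. Hence Theorem \ref{FJTDI} applies verbatim and this biset system is TDI.

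The core of the reduction is to relate this biset system to the set system \eqref{tdicond3}--\eqref{tdicondFJ2}. I would split the biset inequalities into two types. For ${\sf X}\notin\hat{\mathcal{Z}}_{\sf b}$ we have $\hat{\sf p}({\sf X})=0$, so the inequality $x(\delta_A^-({\sf X}))\ge 0$ is a non-negative integral combination of the inequalities $x_a\ge 0$, $a\in\delta_A^-({\sf X})$, already present in \eqref{tdicondFJ2}. For a petal biset ${\sf X}_Z=(Z,C_Z\cap Z)$ with $Z\in\hat{\mathcal{Z}}$, I would invoke the defining property $d_A^-(Z-C_Z)=0$ of a petal: any arc entering $Z$ from outside $Z$ must have its head in the core $C_Z\cap Z$, since otherwise it would enter the wall $Z-C_Z$. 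This yields $\delta_A^-({\sf X}_Z)=\delta_A^-(Z)$, and combined with $\hat{\sf p}({\sf X}_Z)=r_{\sf M}(S_{P_{C_Z}})-r_{\sf M}(S_Z)=\hat p(Z)$ it shows that the petal-biset inequalities are exactly the set inequalities \eqref{tdicond3}. In particular, the two systems define the same polyhedron (the extra biset inequalities being implied by $x\ge\mathbb{0}$), and every inequality of the biset system is a non-negative integral combination of inequalities of \eqref{tdicond3}--\eqref{tdicondFJ2}.

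With these facts, Theorem \ref{tdiclaim} applies with the TDI biset system as $A_1x\le b_1$ and the set system \eqref{tdicond3}--\eqref{tdicondFJ2} as $A_2x\le b_2$, yielding that \eqref{tdicond3}--\eqref{tdicondFJ2} is TDI. The step I expect to be the main obstacle is precisely the identity $\delta_A^-({\sf X}_Z)=\delta_A^-(Z)$: it is the only point where the combinatorial geometry of petals is needed rather than formal manipulation, and it is what legitimizes passing from the richer biset description, on which Frank's theorem is naturally stated, to the leaner set description we want.
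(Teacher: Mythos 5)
Your proposal is correct and follows essentially the same route as the paper: TDIness of the biset system via Claim \ref{psupmod}(b) and Theorem \ref{FJTDI}, then transfer to the set system via Theorem \ref{tdiclaim} by splitting the biset inequalities according to whether ${\sf X}\in\hat{\mathcal{Z}}_{\sf b}$. Your explicit justification of $\delta_A^-({\sf X}_Z)=\delta_A^-(Z)$ from $d_A^-(Z-C_Z)=0$ is a detail the paper leaves implicit, and it is the right one.
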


\begin{proof} 
It is clear that the polyhedron defined by \eqref{tdicond3} and \eqref{tdicondFJ2} coincides with the polyhedron defined by \eqref{tdicondFJ2} and
\begin{eqnarray}
	x(\delta_A^-({\sf X}))	&	\ge	&	\hat{\sf p}({\sf X}) \hskip 1truecm\text{ for every  biset {\sf X} on $V$.}\label{tdicond1} 
\end{eqnarray}
Claim \ref{psupmod}(b), \eqref{GYcond} and Theorem \ref{FJTDI} immediately imply that the system defined by \eqref{tdicondFJ2} and \eqref{tdicond1} is TDI. Let {\sf X} be a biset on $V$. If ${\sf X}\notin\hat{\mathcal{Z}}_{\sf b}$, then $\hat{\sf p}({\sf X})=0,$ so the inequality $x(\delta_A^-({\sf X}))\ge \hat{\sf p}({\sf X})=0$ is the sum of the inequalities $x(a)\ge 0$ for all $a\in\delta_A^-({\sf X}).$ Otherwise, ${\sf X}\in\hat{\mathcal{Z}}_{\sf b}$ so $\hat{\sf p}({\sf X})=r_{{\sf M}}(S_{P_C})-r_{{\sf M}}(S_{X_O})$ for an atom $C$ of $D$. Then $Z=X_O\in\hat{\mathcal{Z}},$ hence \eqref{tdicond1} for ${\sf X}$ and \eqref{tdicond3} for $Z$ coincide.  Then, by Theorem  \ref{tdiclaim}, the system defined by \eqref{tdicond3} and \eqref{tdicondFJ2} is also  TDI.
\end{proof}

In order to characterize the existence of a matroid-reachability-based $(\ell,\ell')$-limited packing of  arborescences, our strategy is to minimize the number of roots of the arborescences in the packing. To achieve this we consider the extended version of the problem where the elements of the matroid correspond to different vertices of the extended graph. For an instance $(D=(V,A), S, \ell, \ell', {\sf M}=(S,r_{\sf M}))$  of the problem, let {\boldmath$D'$} $=(V\cup S',A\cup A')$ be obtained from $D$ by adding a new vertex set {\boldmath$S'$} containing one vertex $s'$ for every $s\in S$ and adding a new arc set {\boldmath$A'$} containing one arc $s's$ for every $s\in S$. Let {\boldmath${\sf M}'$} be a copy of {\sf M} on $S'.$ We say that a family $\mathcal{Z}$ of subsets of $V\cup S$ is {\it $A'$-disjoint} if every arc of $A'$ enters at most one member $\mathcal{Z}$.
\medskip

We are  ready to present the following intermediary result, for the proof see Subsection \ref{pr11}.

\begin{thm} \label{bboboiboreach1}
Let $D=(V\cup S,A^*)$ be a digraph ($A'$ being the set of arcs leaving $S$ and $A=A^*-A'$) such that no arc enters $s$ and exactly one arc leaves $s$ for every vertex $s$ of $S,$ $\ell,\ell'\in\mathbb{Z}_+$, and ${\sf M}=(S,r_{\sf M})$ a matroid. There exists an ${\sf M}$-reachability-based packing of  arborescences in $D$ using at least $\ell$ and at most $\ell'$ arcs of $A'$ if and only if \eqref{ellell}  holds and
\begin{eqnarray} 
\sum_{v\in V}r_{\sf M}(N_{A'}^-(v))	&	\ge	&	\ell,	\label{jzbdjzbzb}	\\
	r_{\sf M}(S\cap P_Z)-r_{\sf M}(S\cap Z) &	\le	&d_{A^*}^-(Z) \hskip .5truecm  \text{for every   $Z\in\hat{\mathcal{Z}},$}\hskip .2truecm \label{1dknjkbduzvreachex}\\
	\sum_{Z\in\mathcal{Z}}(r_{\sf M}(S\cap P_Z)-r_{\sf M}(S\cap Z)-d_A^-(Z)) 	&\le	&\ell' \hskip .5truecm \forall \text{$A'$-disjoint core-laminar subset $\mathcal{Z}$ of $\hat{\mathcal{Z}}.$}\hskip .6truecm \label{1dknjkbduzvreach}
\end{eqnarray}
\end{thm}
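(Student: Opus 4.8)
The plan is to prove necessity directly and to derive sufficiency from the total dual integrality established in \Cref{mainTDI}. For necessity, suppose a packing $\mathcal{B}$ with the required properties exists. Condition \eqref{ellell} is immediate. Since every $v\in V$ lies in exactly $r_{\sf M}(S\cap P_v)$ arborescences and $v$ is never a root, the in-degree of $v$ in $\mathcal{B}$ equals $r_{\sf M}(S\cap P_v)$; the arcs of $A'$ entering $v$ are incident to roots forming an independent subset of $N_{A'}^-(v)$, so there are at most $r_{\sf M}(N_{A'}^-(v))$ of them, and summing over $v$ yields \eqref{jzbdjzbzb}. Condition \eqref{1dknjkbduzvreachex} is exactly the Gao--Yang inequality \eqref{GYcond2} for $D=(V\cup S,A^*)$, forced by \Cref{thmGY}. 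Finally, for any petal $Z$ the packing sends at least $\hat p(Z)$ arcs into $Z$, of which at most $d_A^-(Z)$ lie in $A$; summing the resulting lower bound $\hat p(Z)-d_A^-(Z)$ on arcs of $A'$ entering $Z$ over an $A'$-disjoint family (where each arc of $A'$ is counted at most once) gives \eqref{1dknjkbduzvreach}.

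For sufficiency the key idea is that the number of arcs of $A'$ used by a packing is a quantity I can optimise over the integral polyhedron of \Cref{mainTDI}. Because \eqref{1dknjkbduzvreachex} is \eqref{GYcond2}, \Cref{mainTDI} applies to $D$, so the petal system \eqref{tdicond3} together with the box constraints \eqref{tdicondFJ2} is TDI. I first compute the minimum number $m$ of arcs of $A'$ by maximising $c^{T}x$ with $c\equiv -1$ on $A'$ and $c\equiv 0$ on $A$ over this system: by \Cref{EG} the optimum is attained by an integral $x$, and by TDIness the dual optimum is integral as well. Since the objective ignores $A$ while the petal constraints count arcs of both $A$ and $A'$, the contribution of a petal $Z$ to the integral dual optimum is $\hat p(Z)-d_A^-(Z)$, and dual feasibility forces the supporting petals to be $A'$-disjoint; uncrossing the core-intersecting pairs by the supermodularity of $\hat p$ from Claim~\ref{psupmod}(a) (together with the submodularity of $d_A^-$, which only improves the objective) makes this family core-laminar. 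Hence $m=\max_{\mathcal{Z}}\sum_{Z\in\mathcal{Z}}(\hat p(Z)-d_A^-(Z))$ over $A'$-disjoint core-laminar $\mathcal{Z}\subseteq\hat{\mathcal{Z}}$, so \eqref{1dknjkbduzvreach} gives $m\le\ell'$; and the optimal integral arc set, satisfying the petal inequalities, carries an ${\sf M}$-reachability-based packing using $m$ arcs of $A'$ by \Cref{thmGY}.

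It remains to land inside $[\ell,\ell']$. I will show that the attainable numbers of arcs of $A'$ form the full integer interval $[m,M]$ with $M=\sum_{v\in V}r_{\sf M}(N_{A'}^-(v))$. The upper extreme is reached by a matroid-exchange argument: at each $v$ one may successively swap an internal in-arc of an arborescence through $v$ for a source arc $s\to v$ with $s\in N_{A'}^-(v)$, keeping $R_v$ a basis of $S\cap P_v$, until the arcs of $A'$ entering $v$ form a basis of $N_{A'}^-(v)$; this proves $M\ge\ell$ through \eqref{jzbdjzbzb}, and, performed one swap at a time, shows consecutive integer values are attainable, so the range is contiguous. As \eqref{ellell} gives $\ell\le\ell'$ while $m\le\ell'$ and $\ell\le M$, the intervals $[m,M]$ and $[\ell,\ell']$ intersect, and any packing realising a common value has the desired number of arcs of $A'$.

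The main obstacle is the min-max of the second step: turning the integral LP dual produced by \Cref{EG} into an honest $A'$-disjoint \emph{core-laminar} subfamily of $\hat{\mathcal{Z}}$. The difficulty is that $\hat p$ is supermodular only on core-intersecting petals (Claim~\ref{psupmod}(a)), so the uncrossing must be confined to such pairs and must be verified to preserve both $A'$-disjointness and the covering inequalities; tracking the reachability sets $P_Z$ under intersection and union of petals, exactly as in the proof of Claim~\ref{psupmod}, is what makes this delicate. A secondary technical point is the passage, via the Gao--Yang refinement of \Cref{thmGY}, between arc sets satisfying the petal inequalities with respect to the reachability of $D$ and genuine ${\sf M}$-reachability-based packings.
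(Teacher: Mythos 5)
Your necessity argument and the first half of your sufficiency argument follow the paper's route: the paper also sets up the primal--dual pair over the TDI system of \Cref{mainTDI} with cost $1$ on $A'$ and $0$ on $A$, extracts integral optimal primal and dual solutions via \Cref{EG}, and proves (its Lemma on the integral optimal solutions, by choosing the dual solution minimizing $\mathbb{1}^T{y\choose q}$ and then maximizing $\sum|Z|^2y(Z)$) exactly the min--max you describe: the support of $\overline y$ is an $A'$-disjoint core-laminar family, each $\overline y(Z)=1$, $\sum_a\overline q(a)=\sum_Z d_A^-(Z)$, hence the minimum number of $A'$-arcs is $\sum_{Z}(\hat p(Z)-d_A^-(Z))\le\ell'$ by \eqref{1dknjkbduzvreach}. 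So up to the point where you obtain an arc set $A_1$ with $|A_1\cap A'|\le\ell'$ admitting a packing, you are reconstructing the paper's proof (modulo the details of the uncrossing, which you correctly flag as the delicate part).

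The genuine gap is in your last paragraph, where you handle the lower bound $\ell$. You assert that the attainable numbers of $A'$-arcs form a contiguous integer interval $[m,M]$ with $M=\sum_{v}r_{\sf M}(N^-_{A'}(v))$, justified by ``successively swapping an internal in-arc of an arborescence through $v$ for a source arc $s\to v$.'' This swap is not a well-defined local operation on the packing: replacing the arc $uv$ by $sv$ detaches the entire subtree of the arborescence hanging at $v$ and re-roots it at $s$, so you must verify that $R^{\mathcal B}_w-s_i+s$ remains a basis of $S\cap P_w$ for \emph{every} descendant $w$ of $v$ (the exchange axiom only guarantees that \emph{some} element of $R^{\mathcal B}_w+s$ can be removed, not $s_i$ specifically), and that $s$ is not already the root of another arborescence in the packing. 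Neither point is addressed, and the contiguity claim is itself at least as strong as the theorem, so it cannot be waved through. The paper avoids this entirely: it augments $A_1$ by a \emph{smallest} set of $A'$-arcs to an arc set $A_2$ with each $N^-_{A_2}(v)$ independent and $\sum_v|N^-_{A_2}(v)|\ge\ell$ (possible by \eqref{jzbdjzbzb}, and giving $|A_2\cap A'|\in\{\ell,|A_1\cap A'|\}\subseteq[\ell,\ell']$), then builds the auxiliary instance $D'=(V,A_1\cap A)$, $S'_v=N^-_{A_2}(v)$, ${\sf M}'={\sf M}|_{S'}$, verifies the hypotheses of \Cref{thmGY}(b), and uses the resulting \emph{complete} packing --- completeness is what forces every arc of $A_2\cap A'$ to be used, pinning the count exactly. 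A final verification (the argument with $Z'_v=P^{D'}_v$) shows the lifted packing is ${\sf M}$-reachability-based in $D$. You need some substitute for this construction; as written, your proof of the lower bound does not go through.
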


Recall that $\mathcal{X}$ is the set of generalized petal bisets.
We now present our main result. It will be obtained from Theorem \ref{bboboiboreach1}, for the proof see Subsection \ref{pr12}.

\begin{thm} \label{bboboiboreach}
Let $D=(V,A)$ be a  digraph, $S$ a multiset of vertices in $V$, $\ell,\ell'\in\mathbb{Z}_+$, and ${\sf M}=(S,r_{\sf M})$ a matroid. There exists an ${\sf M}$-reachability-based $(\ell,\ell')$-limited packing of  arborescences in $D$ if and only if \eqref{ellell}, \eqref{jbuoouou} and \eqref{GYcond} hold and 
\begin{eqnarray} 
	\sum_{{\sf X}\in\mathcal{P}}(r_{\sf M}(S_{P_{X_I}})-r_{\sf M}(S_{X_W})-d_A^-(X_O))& \le	&\ell'		\hskip .4truecm  \text{ for every OW laminar biset family } {\cal P} \text{ of }  \mathcal{X}.\hskip .8truecm	\label{dknjkbduzvreach}
\end{eqnarray}
\end{thm}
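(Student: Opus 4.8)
The plan is to derive Theorem~\ref{bboboiboreach} from Theorem~\ref{bboboiboreach1} by passing to the extended digraph $D'=(V\cup S',A\cup A')$ together with the copy matroid ${\sf M}'$ on $S'$, exactly as defined before Theorem~\ref{bboboiboreach1}. In $D'$ every $s'\in S'$ is a source with the single out-arc $s's\in A'$, so it forms its own atom, the atoms of $D$ inside $V$ are unchanged, and for every $v\in V$ one has $P^{D'}_v\cap S'=\{s':s\in S_{P^D_v}\}$. First I would establish the \emph{packing correspondence}: prepending the arc $s's$ to each arborescence rooted at $s$ turns an ${\sf M}$-reachability-based $(\ell,\ell')$-limited packing in $D$ into an ${\sf M}'$-reachability-based packing in $D'$ in which the number of arcs of $A'$ used equals the number of arborescences of the packing, and conversely deleting $S'$ and $A'$ reverses this. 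Checking that the basis condition at $v\in V$ transfers verbatim (via the isomorphism $S'_{P^{D'}_v}\cong S_{P^D_v}$), and that each $s'$ is either a loop of ${\sf M}'$, hence isolated, or the root of exactly one arborescence, shows that $D$ admits the required packing if and only if $D'$ admits an ${\sf M}'$-reachability-based packing using at least $\ell$ and at most $\ell'$ arcs of $A'$.

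It then remains to match the two lists of conditions. Condition~\eqref{ellell} is common, and \eqref{jbuoouou}$\Leftrightarrow$\eqref{jzbdjzbzb} is immediate since $N^-_{A'}(v)=\{s':s\in S_v\}$ is a copy of $S_v$, so $r_{{\sf M}'}(N^-_{A'}(v))=r_{\sf M}(S_v)$. The heart of the matter is a dictionary between the petals of $D'$ and the generalized petal bisets of $D$. Given ${\sf X}=(X_O,X_I)\in\mathcal X$ I would set $Z_{\sf X}=X_O\cup\{s'\in S':s\in X_W\}$; this is a petal of $D'$ with the same atom $C=C_{X_O}$, and because $P^{D'}_{Z_{\sf X}}$ restricted to $S'$ is a copy of $S_{P_C}=S_{P_{X_I}}$ (as $\emptyset\neq X_I\subseteq C$), while $S'\cap Z_{\sf X}$ is a copy of $S_{X_W}$ and $d^-_A(Z_{\sf X})=d^-_A(X_O)$, the summand of~\eqref{1dknjkbduzvreach} at $Z_{\sf X}$ equals the summand $r_{\sf M}(S_{P_{X_I}})-r_{\sf M}(S_{X_W})-d^-_A(X_O)$ of~\eqref{dknjkbduzvreach} at ${\sf X}$. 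Conversely, to a petal $Z$ of $D'$ whose atom lies in $V$ I would associate the biset ${\sf X}(Z)=(X_O,X_I)$, where $X_O=Z\cap V$, $X_W$ is the set of $v\in X_O$ all of whose copies lie in $Z$, and $X_I=X_O\setminus X_W$; the petal condition $d^-_{A\cup A'}(Z-C)=0$ forces every copy of a vertex of $X_O-C$ into $Z$, so $X_I\subseteq C\cap X_O$ is contained in the core, and the singleton petals $\{s'\}$ together with the petals having $X_I=\emptyset$ are harmless (their summand equals $0$, resp.\ is nonpositive by~\eqref{GYcond2}). The same dictionary applied to a single petal yields \eqref{GYcond}$\Leftrightarrow$\eqref{1dknjkbduzvreachex}, using the equivalence of \eqref{GYcond} and \eqref{GYcond2} from Theorem~\ref{thmGY}.

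The main obstacle is to show that this dictionary carries OW laminar families of $\mathcal X$ to $A'$-disjoint core-laminar subfamilies of $\hat{\mathcal Z}_{D'}$ and back, so that the two suprema in~\eqref{dknjkbduzvreach} and~\eqref{1dknjkbduzvreach} coincide. The forward direction is direct: for core-intersecting ${\sf X}^1,{\sf X}^2$ with, say, $X^1_O\subseteq X^2_W$, the outer sets nest ($Z_{{\sf X}^1}\subseteq Z_{{\sf X}^2}$), and since an arc $s's$ enters $Z_{\sf X}$ exactly when $s\in X_I$, the disjointness $X^1_I\cap X^2_I=\emptyset$ forced by $X^1_O\subseteq X^2_O-X^2_I$ gives $A'$-disjointness. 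The hard part is the converse, where $A'$-disjointness does the real work: if $Z^1\subseteq Z^2$ are core-intersecting members of an $A'$-disjoint core-laminar family and some $v\in X^1_O$ had a copy $s'\notin Z^2$, then the single arc $s's$ would enter both $Z^1$ and $Z^2$, a contradiction; hence every such copy lies in $Z^2$, that is $X^1_O\subseteq X^2_W$, so the associated bisets ${\sf X}(Z^1),{\sf X}(Z^2)$ are OW laminar.

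Combining this laminarity transfer with the value identity (discarding the singleton and $X_I=\emptyset$ petals noted above, and normalizing each $Z$ to $Z_{{\sf X}(Z)}$, which only raises its summand and makes the correspondence injective, thereby ruling out double counting) shows that the maximum of the left-hand side of~\eqref{1dknjkbduzvreach} over $A'$-disjoint core-laminar families equals the maximum of the left-hand side of~\eqref{dknjkbduzvreach} over OW laminar families of $\mathcal X$. With all four conditions of Theorem~\ref{bboboiboreach} for $(D,S,{\sf M})$ thus matched to the four conditions of Theorem~\ref{bboboiboreach1} for $(D',S',{\sf M}')$, feeding them into Theorem~\ref{bboboiboreach1} together with the packing correspondence completes the proof.
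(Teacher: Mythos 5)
Your proposal is correct, and for the sufficiency direction it is essentially the paper's proof: pass to the extended digraph $D'$ with the copy matroid ${\sf M}'$, translate each $A'$-disjoint core-laminar family of petals of $D'$ into an OW laminar family of $\mathcal{X}$ with at least as large a sum (your ${\sf X}(Z)$ is exactly the paper's biset with inner set $N^+_{A'}(S'-Z)\cap Z$, and your observation that ``$A'$-disjointness does the real work'' is precisely Proposition \ref{disjointbiset}), then invoke Theorem \ref{bboboiboreach1} and delete the roots $s'$. Where you genuinely diverge is the necessity: the paper proves \eqref{dknjkbduzvreach} directly from a given packing by splitting, for $v\in X_I$, the root multiset $R^{\mathcal{B}}_v$ into its parts in $X_I$, in $X_W$ and outside $X_O$, bounding these by $|S^*_{X_I}|$, $r_{\sf M}(S_{X_W})$ and $d_A^-(X_O)$ respectively, and summing over the family using the disjointness of the inner sets; you instead push the packing up to $D'$ and pull \eqref{1dknjkbduzvreach} back through the forward map ${\sf X}\mapsto Z_{\sf X}=X_O\cup\{s':s\in S_{X_W}\}$. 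Your route does work --- $Z_{\sf X}$ is indeed a petal of $D'$ with the same summand, OW laminarity gives both nestedness of the images and $A'$-disjointness via $X^1_I\cap X^2_I=\emptyset$, and the map is injective on OW laminar families because two core-intersecting bisets with equal outer sets cannot coexist in such a family --- and it buys a cleaner picture in which the two condition systems are literally equivalent under one dictionary, at the price of having to verify the forward half of that dictionary, which the paper's short direct counting argument avoids. Two small points to tighten: the nonpositivity of the summand for petals of $D'$ with no entering $A'$-arc follows from \eqref{1dknjkbduzvreachex} for $D'$ (equivalently from \eqref{GYcond} for $D$), not from \eqref{GYcond2} as literally cited; and in the packing correspondence you should explicitly add the singleton arborescence $\{s'\}$ for every non-loop $s\in S$ that is not used as a root, so that the basis condition of ${\sf M}'$ at the vertices of $S'$ holds in $D'$ before you invoke the necessity of Theorem \ref{bboboiboreach1}.
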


We finally provide the answer for the decomposition problem for the matroid-reachability-based version. It will be easily obtained from the previous result.

\begin{thm} \label{vouyfy2}
Let $D=(V,A)$ be a  digraph, $S$ a multiset of vertices in $V$, and ${\sf M}=(S,r_{\sf M})$ a matroid. There exists a decomposition of $A$ into an ${\sf M}$-reachability-based packing of arborescences  in $D$ if and only if \eqref{GYcond2} holds and for every OW laminar biset family ${\cal P}$ of $\mathcal{X},$ 
\begin{eqnarray} \label{ljhfygdtu2}
\sum_{{\sf X}\in\mathcal{P}}(r_{\sf M}(S_{P_{X_I}})-r_{\sf M}(S_{X_W})-d_A^-(X_O))& \le	&(\sum_{v\in V}r_{\sf M}(S_{P_v}))-|A|.	
\end{eqnarray}
\end{thm}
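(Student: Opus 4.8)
The plan is to derive \Cref{vouyfy2} from \Cref{bboboiboreach} in exactly the way \Cref{vouyfy} was derived from \Cref{bboboibo}, since a decomposition is nothing but a packing whose arc set is forced to be all of $A$. The key quantitative observation is that in any ${\sf M}$-reachability-based packing of arborescences with root set $S^*$, a vertex $v$ lies in exactly $r_{\sf M}(S_{P_v})$ of the arborescences (one for each element of a basis of $S_{P_v}$), and within each arborescence containing $v$ either $v$ is a root or exactly one arc enters $v$. Hence the number of arborescences in the packing equals $\sum_{v\in V}|S^*_v|$, while the number of arcs used equals $\sum_{v\in V}(r_{\sf M}(S_{P_v})-|S^*_v|)$. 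Summing, the total number of arcs plus the number of arborescences equals the fixed quantity $\sum_{v\in V}r_{\sf M}(S_{P_v})$, so \emph{minimizing} the number of arborescences is the same as \emph{maximizing} the number of arcs used, and a decomposition is achieved precisely when the packing uses all of $|A|$ arcs, i.e. when it can attain at most $(\sum_{v\in V}r_{\sf M}(S_{P_v}))-|A|$ arborescences.

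Concretely, to prove \textbf{necessity}, I would assume a decomposition of $A$ into an ${\sf M}$-reachability-based packing with root set $S^*$ exists. Then each arc of $A$ enters some vertex in exactly one arborescence, so $|A|=\sum_{v\in V}(r_{\sf M}(S_{P_v})-|S^*_v|)=(\sum_{v\in V}r_{\sf M}(S_{P_v}))-|S^*|$, giving $|S^*|=(\sum_{v\in V}r_{\sf M}(S_{P_v}))-|A|$. The existence of the packing gives \eqref{GYcond} and hence \eqref{GYcond2}. Applying \Cref{bboboiboreach} with $\ell'=(\sum_{v\in V}r_{\sf M}(S_{P_v}))-|A|$, condition \eqref{dknjkbduzvreach} yields \eqref{ljhfygdtu2} directly, since the right-hand sides coincide.

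For \textbf{sufficiency}, assume \eqref{GYcond2} and \eqref{ljhfygdtu2} hold, and set $\ell=\ell'=(\sum_{v\in V}r_{\sf M}(S_{P_v}))-|A|$. I need to verify every hypothesis of \Cref{bboboiboreach}. Condition \eqref{ellell} is trivial; \eqref{GYcond} follows from \eqref{GYcond2} by the equivalence in \Cref{thmGY}(a); and \eqref{dknjkbduzvreach} is exactly \eqref{ljhfygdtu2} with this choice of $\ell'$. The condition \eqref{jbuoouou}, namely $\sum_{v\in V}r_{\sf M}(S_v)\ge\ell$, requires a short argument: applying \eqref{GYcond2} to the singleton petals (or the relevant petals containing $v$) gives $d_A^-(v)\ge r_{\sf M}(S_{P_v})-r_{\sf M}(S_v)$, hence $r_{\sf M}(S_v)\ge r_{\sf M}(S_{P_v})-d_A^-(v)$; summing over $v\in V$ yields $\sum_{v\in V}r_{\sf M}(S_v)\ge(\sum_{v\in V}r_{\sf M}(S_{P_v}))-|A|=\ell$, so \eqref{jbuoouou} holds. \Cref{bboboiboreach} then produces an ${\sf M}$-reachability-based packing with arc set $B$ and root set $S^*$ satisfying $|S^*|=(\sum_{v\in V}r_{\sf M}(S_{P_v}))-|A|$; counting arcs gives $|B|=\sum_{v\in V}(r_{\sf M}(S_{P_v})-|S^*_v|)=(\sum_{v\in V}r_{\sf M}(S_{P_v}))-|S^*|=|A|$, so the packing actually decomposes $A$.

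The step I expect to be the only genuine obstacle is establishing \eqref{jbuoouou}, which hinges on the inequality $d_A^-(v)\ge r_{\sf M}(S_{P_v})-r_{\sf M}(S_v)$; this must be obtained by applying \eqref{GYcond2} to an appropriate petal whose core and petal closure witness the difference $r_{\sf M}(S_{P_v})-r_{\sf M}(S_v)$ at $v$, so care is needed to identify the correct petal. Everything else is a routine arc-counting identity, exactly parallel to the matroid-based case, and the heavy lifting is done by \Cref{bboboiboreach} together with the equivalence of \eqref{GYcond} and \eqref{GYcond2}.
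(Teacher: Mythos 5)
Your proposal is correct and follows essentially the same route as the paper: both directions reduce to \Cref{bboboiboreach} with $\ell=\ell'=(\sum_{v\in V}r_{\sf M}(S_{P_v}))-|A|$, using the arc-counting identity $d_A^-(v)=r_{\sf M}(S_{P_v})-|S^*_v|$ for a decomposition and deriving \eqref{jbuoouou} from \eqref{GYcond2} applied to the singletons $\{v\}$, which are indeed petals (take $C$ to be the atom containing $v$, so $\hat p(\{v\})=r_{\sf M}(S_{P_v})-r_{\sf M}(S_v)$). The step you flagged as a possible obstacle is thus immediate, and the rest matches the paper's argument.
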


We conclude by showing that Theorem \ref{bboboiboreach} implies all the results of  Subsection \ref{mrbpoa}.

\begin{cl}
Theorem \ref{bboboiboreach} implies Theorem \ref{bboboibo}.
\end{cl}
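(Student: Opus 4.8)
The plan is to apply Theorem~\ref{bboboiboreach} to the very same instance $(D,S,\ell,\ell',{\sf M})$, with no modification of the digraph, and to prove only sufficiency (the necessity being routine, as announced in the introduction). So I assume \eqref{ellell}, \eqref{jbuoouou}, \eqref{kjbvgc} and \eqref{dknjkbduzv}, and I recall that \eqref{kjbvgc} is equivalent to \eqref{ddgnszcond}. The single observation that drives everything is that if $Y\subseteq V$ is non-empty and no arc of $A$ enters it, that is $d_A^-(Y)=0$, then \eqref{ddgnszcond} forces $r_{\sf M}(S_Y)=r_{\sf M}(S)$. This applies to three kinds of sets that are automatically closed under taking predecessors: every set $P_v$ for $v\in V$, every set $P_{C_Z}$ for a petal $Z$, and the non-core part $Z-C_Z$ of any petal $Z$ (which satisfies $d_A^-(Z-C_Z)=0$ by the very definition of a petal). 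In particular $r_{\sf M}(S_{P_v})=r_{\sf M}(S)$ for every $v$, so the matroid restricted to what each vertex can reach already has full rank.

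With this in hand I would verify the four hypotheses of Theorem~\ref{bboboiboreach} on $D$. Conditions \eqref{ellell} and \eqref{jbuoouou} are literally those assumed. For \eqref{GYcond} it suffices, by Theorem~\ref{thmGY}(a), to check the equivalent \eqref{GYcond2}: for a petal $Z$ we get $\hat p(Z)=r_{\sf M}(S_{P_{C_Z}})-r_{\sf M}(S_Z)=r_{\sf M}(S)-r_{\sf M}(S_Z)$ by the observation, and this is at most $d_A^-(Z)$ by \eqref{ddgnszcond} applied to $Z$.

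The real work is \eqref{dknjkbduzvreach}, whose quantification ranges over all OW laminar families of generalized petal bisets $\mathcal{X}$, whereas the available hypothesis \eqref{dknjkbduzv} only controls families of subatoms. I would split a given OW laminar family $\mathcal{P}\subseteq\mathcal{X}$ into the bisets whose outer set is a subatom and the rest. If ${\sf X}\in\mathcal{P}$ is not of subatom type, then its non-core part $N=X_O-C_{X_O}$ is non-empty, is contained in $X_W$, and satisfies $r_{\sf M}(S_N)=r_{\sf M}(S)$ by the observation; hence $r_{\sf M}(S_{P_{X_I}})-r_{\sf M}(S_{X_W})\le r_{\sf M}(S)-r_{\sf M}(S_N)=0$, and the whole summand is at most $-d_A^-(X_O)\le 0$. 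Dropping these non-positive terms can only enlarge the sum, so it is bounded by the sum over the subatom bisets of $\mathcal{P}$, which still form an OW laminar family of subatoms; on each of them the trivial bound $r_{\sf M}(S_{P_{X_I}})\le r_{\sf M}(S)$ turns the summand into at most the corresponding summand of \eqref{dknjkbduzv}. Thus \eqref{dknjkbduzv} yields the bound $\le\ell'$, establishing \eqref{dknjkbduzvreach}.

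Theorem~\ref{bboboiboreach} then produces an ${\sf M}$-reachability-based $(\ell,\ell')$-limited packing in $D$. Because $r_{\sf M}(S_{P_v})=r_{\sf M}(S)$ for every $v$, a basis of $S_{P_v}$ is an independent set of size $r_{\sf M}(S)$, hence a basis of ${\sf M}$; so at each vertex the root multiset is a basis of the whole matroid, and the packing is precisely an ${\sf M}$-based $(\ell,\ell')$-limited packing, as required. I expect the only genuinely delicate point to be the passage from the large family $\mathcal{X}$ to subatoms in \eqref{dknjkbduzvreach}; the clean resolution is the remark that a petal biset strictly larger than its core already has $r_{\sf M}(S_{X_W})=r_{\sf M}(S)$ and therefore contributes non-positively to the sum, so such bisets may simply be discarded.
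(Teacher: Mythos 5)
Your proof is correct and follows essentially the same route as the paper's: the key observation that $d_A^-(Y)=0$ and $Y\neq\emptyset$ force $r_{\sf M}(S_Y)=r_{\sf M}(S)$ (the paper's \eqref{igcgch}), applied to walls of petal bisets to verify \eqref{GYcond} and to reduce \eqref{dknjkbduzvreach} to the subatom bisets covered by \eqref{dknjkbduzv}, and finally to conclude that a reachability-based packing is in fact ${\sf M}$-based. The only cosmetic differences are that you check \eqref{GYcond2} directly instead of the paper's two-case verification of \eqref{GYcond}, and you discard the non-positive summands explicitly where the paper phrases the same step as a without-loss-of-generality assumption.
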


\begin{proof}
 Let $(D=(V,A), {\sf M}=(S,r_{\sf M}), \ell,\ell')$ be an instance of Theorem \ref{bboboibo} satisfying \eqref{kjbvgc}, \eqref{ellell}, \eqref{jbuoouou},  and \eqref{dknjkbduzv}. We show that \eqref{dknjkbduzv} implies \eqref{GYcond} and \eqref{dknjkbduzvreach}. 
 First we mention that  
\begin{eqnarray} 
	r_{\sf M}(S) &\le& r_{\sf M}(S_{Z})\hskip .9truecm \text{ for every non-empty } Z\subseteq V \text{ with }  d_A^-(Z)=0.\label{igcgch}
 \end{eqnarray}
Indeed, by $Z\neq\emptyset$ and $d_A^-(Z)=0,$ there exists a smallest non-empty  $Y\subseteq Z$ such that $d_A^-(Y)=0.$ Then $Y$ is an atom of $D.$ Thus, by \eqref{kjbvgc} and since $r_{\sf M}$ is monotone, we get that \eqref{igcgch} also holds.
 \medskip
 
 To show  \eqref{GYcond}, let  ${\sf X}\in\hat{\mathcal{Z}}_{\sf b}.$ If $X_W\neq\emptyset,$ then, by the monotonicity of $r_{\sf M}$, $d_A^-(X_W)=0$ and  \eqref{igcgch}, we get that $r_{\sf M}(S_{P_{X_I}})-r_{\sf M}(S_{X_O})\le r_{\sf M}(S)-r_{\sf M}(S_{X_W})\le 0\le d_A^-(X_O)$, and \eqref{GYcond} holds. If $X_W=\emptyset,$ then, $X_O$ is an subatom, so, by the monotonicity of $r_{\sf M}$ and \eqref{kjbvgc}, we get that $r_{\sf M}(S_{P_{X_I}})-r_{\sf M}(S_{X_O})\le r_{\sf M}(S)-r_{\sf M}(S_{X_O})\le d_A^-(X_O),$ and \eqref{GYcond} holds.
  \medskip

To show that \eqref{dknjkbduzvreach} also holds let ${\cal P}$ be an OW laminar biset family of $\mathcal{X}.$  For every ${\sf X}\in\mathcal{P}$, we may suppose without loss of generality, by the monotonicity of $r_{\sf M}$,   that we have $1\le r_{\sf M}(S_{P_{X_I}})-r_{{\sf M}}(S_{X_W})-d_A^-(X_O)\le r_{{\sf M}}(S)-r_{{\sf M}}(S_{X_W-C_{\sf X}}).$ Since $d_A^-(X_W-C_{\sf X})=0,$ it follows, by \eqref{igcgch}, that $X_W-C_{\sf X}=\emptyset$, that is, $X_O\subseteq C_{\sf X}$, so $X_O$ is a subatom. Then, since $r_{\sf M}$ is monotone and by \eqref{dknjkbduzv}, we have $\sum_{{\sf X}\in\mathcal{P}}(r_{\sf M}(S_{P_{X_I}})-r_{\sf M}(S_{X_W})-d_A^-(X_O))\le \sum_{{\sf X}\in\mathcal{P}}(r_{\sf M}(S)-r_{\sf M}(S_{X_W})-d_A^-(X_O))\le \ell',$ so  \eqref{dknjkbduzvreach} holds.
 \medskip

By Theorem \ref{bboboiboreach}, there exists an ${\sf M}$-reachability-based $(\ell,\ell')$-limited packing of  arborescences in $D$. Since, by \eqref{igcgch}, we have $r_{\sf M}(S_{P_{v}})\ge r_{\sf M}(S)$ for every $v\in V,$ the packing is ${\sf M}$-based and the proof of Theorem \ref{bboboibo} is complete.
\end{proof}

\begin{cl}
Theorem \ref{bboboiboreach} implies Theorem \ref{thmGY}.
\end{cl}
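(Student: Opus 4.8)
The plan is to derive this claim directly from Theorem \ref{bboboiboreach} by specializing the two bounds $\ell, \ell'$ to the extreme values that force completeness (for part (b)) or impose no upper constraint at all (for part (a)). The structure mirrors the three preceding claims, where Theorem \ref{bboboibo} was shown to imply its more specialized predecessors; here the ambient result is the reachability-based $(\ell,\ell')$-limited packing theorem, and the target Theorem \ref{thmGY} is the unlimited (part (a)) and complete (part (b)) matroid-reachability-based packing result of Gao and Yang.

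\textbf{Part (a).} First I would take an instance $(D,S,{\sf M})$ satisfying \eqref{GYcond} and set $\ell=0$ and $\ell'=|S|$, so that no lower bound is effective and the upper bound is vacuous (any packing has at most $|S|$ arborescences, since each $s\in S$ roots at most one). Condition \eqref{ellell} holds trivially, and \eqref{jbuoouou} holds since $\ell=0$ and $r_{\sf M}$ is non-negative. The condition \eqref{GYcond} is assumed directly. The main work is to check the upper-bound inequality \eqref{dknjkbduzvreach}: for every OW-laminar biset family ${\cal P}$ of $\mathcal{X}$, I must show $\sum_{{\sf X}\in\mathcal{P}}(r_{\sf M}(S_{P_{X_I}})-r_{\sf M}(S_{X_W})-d_A^-(X_O))\le |S|$. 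I expect this to follow by the same telescoping/induction argument used in the proof that Theorem \ref{bboboibo} implies Theorem \ref{mbpsaori}: exploiting OW-laminarity to organize the family into a forest-like structure, bounding the contribution of each ``maximal'' biset by a rank term via submodularity and \eqref{GYcond}, and summing to a global bound of $r_{\sf M}(S)\le|S|$. Once \eqref{dknjkbduzvreach} is verified, Theorem \ref{bboboiboreach} yields an ${\sf M}$-reachability-based $(0,|S|)$-limited packing, which is simply an ${\sf M}$-reachability-based packing.

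\textbf{Part (b).} Here I would instead set $\ell=\ell'=|S|$ to force a complete packing, and assume \eqref{matcondori1} and \eqref{GYcond2} (equivalently \eqref{GYcond}). Condition \eqref{ellell} is trivial. For \eqref{jbuoouou}, I use \eqref{matcondori1}: since $S_v\in\mathcal{I}_{\sf M}$ for every $v$, we have $r_{\sf M}(S_v)=|S_v|$, and $\sum_{v\in V}|S_v|=|S|=\ell$. The upper-bound condition \eqref{dknjkbduzvreach} with $\ell'=|S|$ is handled exactly as in part (a). Theorem \ref{bboboiboreach} then gives a packing using both at least and at most $|S|$ arborescences, i.e.\ a packing in which every $s\in S$ is the root of exactly one arborescence --- a complete ${\sf M}$-reachability-based packing.

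\textbf{The main obstacle} I anticipate is establishing inequality \eqref{dknjkbduzvreach} from \eqref{GYcond} alone, since the biset family ranges over all of $\mathcal{X}$ (generalized petal bisets with inner sets that are arbitrary non-empty subsets of petal cores), not merely subatoms as in the matroid-based setting. The telescoping argument must respect the $P_{X_I}$ reachability term rather than a fixed $r_{\sf M}(S)$, so I would adapt the definitions of $\mathcal{P}_{\sf Y}$ and $\mathcal{Q}_{\sf Y}$ to the OW-laminar structure on $\mathcal{X}$ and carry out the induction using Claim \ref{psupmod}(b) (positive intersecting supermodularity of $\hat{\sf p}$) in place of plain submodularity; the reachability bookkeeping via $P_{C_{\sf X}}=P_{X_O}$ for petal bisets is the delicate point to get right.
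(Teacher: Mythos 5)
Your setup matches the paper's: you specialize to $\ell=0$, $\ell'=|S|$, and the verifications of \eqref{ellell}, \eqref{jbuoouou} and \eqref{GYcond} are correct (your separate treatment of part (b) with $\ell=\ell'=|S|$ and \eqref{matcondori1} is also fine, and in fact slightly more complete than the paper, which only argues part (a) explicitly). The problem is the one step you yourself flag as the main obstacle: you never actually prove \eqref{dknjkbduzvreach}, and the route you propose for it would not work as described. The telescoping induction from the proof that Theorem \ref{bboboibo} implies Theorem \ref{mbpsaori} is powered by the subpartition inequality \eqref{mbpsaoricond}, applied to the disjoint outer sets of $\mathcal{Q}_{\sf Y}$ to get $\sum_{{\sf Z}\in\mathcal{Q}_{\sf Y}}(r_{\sf M}(S)-d_A^-(Z_O))\le r_{\sf M}(S_{\bigcup Z_O})$. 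No such hypothesis is available here: from \eqref{GYcond} you only get the per-biset bound $r_{\sf M}(S_{P_{Z}})-d_A^-(Z)\le r_{\sf M}(S_{Z})$, and summing over disjoint petals gives $\sum r_{\sf M}(S_{Z})$, which can strictly exceed $r_{\sf M}(S_{\bigcup Z})$ (subadditivity goes the wrong way). So the inductive step of your planned telescoping breaks, and ``adapting the bookkeeping'' will not repair it.

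The inequality is nonetheless true, by a much more direct argument that exploits the extra slack of $|S|$ in $\ell'$. For each ${\sf X}\in\mathcal{P}$, the petal biset of $X_O$ together with \eqref{GYcond} and $P_{X_I}=P_{X_O}=P_{C_{\sf X}}$ gives $r_{\sf M}(S_{P_{X_O}})-r_{\sf M}(S_{X_O})-d_A^-(X_O)\le 0$; submodularity and subcardinality give $r_{\sf M}(S_{X_O})\le r_{\sf M}(S_{X_W})+|S_{X_I}|$; hence each summand of \eqref{dknjkbduzvreach} is at most $|S_{X_I}|$, and since the inner sets of an OW laminar family are disjoint, the total is at most $|S|=\ell'$. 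No induction, no forest structure on $\mathcal{P}$, and no supermodularity of $\hat{\sf p}$ is needed. As written, your proposal leaves its central inequality unestablished and points toward a method that fails for this hypothesis set.
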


\begin{proof}
 Let $(D=(V,A), {\sf M}=(S,r_{\sf M}))$ be an instance of Theorem \ref{thmGY} satisfying \eqref{GYcond}. 
 Let $\ell=0$ and $\ell'=|S|.$ Then $(D=(V,A), {\sf M}=(S,r_{\sf M}), \ell,\ell')$ is an instance of Theorem \ref{bboboiboreach}.
 We now show that all the conditions of Theorem \ref{bboboiboreach} hold. Conditions \eqref{ellell} and \eqref{jbuoouou} trivially hold and  \eqref{GYcond} holds by assumption.
 To show that  \eqref{dknjkbduzvreach} also holds let ${\cal P}$ be an OW laminar biset family  of $\mathcal{X}$. By \eqref{GYcond}, the submodularity and the subcardinality of $r_{\sf M},$ and since $X_I$'s are disjoint, we have 
 \begin{eqnarray*} 
	0	&	\ge	&	\sum_{{\sf X}\in\mathcal{P}}(r_{\sf M}(S_{P_{X_O}})-r_{\sf M}(S_{X_O})-d_A^-(X_O))			\\
		&	\ge 	&	\sum_{{\sf X}\in\mathcal{P}}(r_{\sf M}(S_{P_{X_I}})-r_{\sf M}(S_{X_W})-|S_{X_I}|-d_A^-(X_O))	\\			&	\ge	&	\sum_{{\sf X}\in\mathcal{P}}(r_{\sf M}(S_{P_{X_I}})-r_{\sf M}(S_{X_W})-d_A^-(X_O))-|S|,
\end{eqnarray*}
so \eqref{dknjkbduzvreach}  holds. Then, by Theorem \ref{bboboiboreach}, there exists an ${\sf M}$-reachability-based packing of  arborescences in $D$ (containing at most $|S|$ arborescences) and the proof of Theorem \ref{thmGY} is completed.
\end{proof}

\begin{cl}
Theorem \ref{bboboiboreach} implies Theorem \ref{vouyfy2}.
\end{cl}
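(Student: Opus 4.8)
The plan is to deduce Theorem \ref{vouyfy2} from Theorem \ref{bboboiboreach} in exactly the same fashion as Theorem \ref{vouyfy} was deduced from Theorem \ref{bboboibo}, setting both bounds equal to the quantity $(\sum_{v\in V}r_{\sf M}(S_{P_v}))-|A|$. First I would treat \textbf{necessity}: supposing that $A$ decomposes into an ${\sf M}$-reachability-based packing of arborescences with root set $S^*$, I would invoke Theorem \ref{thmGY}(b) to get that \eqref{matcondori1} and \eqref{GYcond2} hold, and then argue that \eqref{ljhfygdtu2} follows by applying the upper-bound inequality \eqref{dknjkbduzvreach} of Theorem \ref{bboboiboreach} with $\ell'=(\sum_{v\in V}r_{\sf M}(S_{P_v}))-|A|$. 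The key computation is that in a complete decomposition the number of arborescences covering each vertex $v$ equals $r_{\sf M}(S_{P_v})$ (since the root multiset at $v$ is a basis of $S_{P_v}$), so counting incidences gives $|S^*|=\sum_{v\in V}r_{\sf M}(S^*_v)=\sum_{v\in V}(r_{\sf M}(S_{P_v})-d_A^-(v))=(\sum_{v\in V}r_{\sf M}(S_{P_v}))-|A|$; this identifies the total number of arborescences with the target value, and then \eqref{dknjkbduzvreach} yields \eqref{ljhfygdtu2}.

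For \textbf{sufficiency}, I would assume \eqref{GYcond2} and \eqref{ljhfygdtu2}, set $\ell=\ell'=(\sum_{v\in V}r_{\sf M}(S_{P_v}))-|A|$, and verify each hypothesis of Theorem \ref{bboboiboreach}. Condition \eqref{ellell} is trivial since $\ell=\ell'$; condition \eqref{GYcond} holds because it is equivalent to the assumed \eqref{GYcond2} (stated in Theorem \ref{thmGY}(a)). For \eqref{jbuoouou}, I would apply the reachability in-degree bound \eqref{GYcond2} at the single-vertex petals: for each $v\in V$ one has $d_A^-(v)\ge r_{\sf M}(S_{P_v})-r_{\sf M}(S_v)$, hence summing gives $\sum_{v\in V}r_{\sf M}(S_v)\ge\sum_{v\in V}(r_{\sf M}(S_{P_v})-d_A^-(v))=(\sum_{v\in V}r_{\sf M}(S_{P_v}))-|A|=\ell$. (One must check that each singleton $\{v\}$ is indeed a petal so that \eqref{GYcond2} applies; this should hold since $v$ lies in its own atom-reachability structure with $\hat p(\{v\})=r_{\sf M}(S_{P_v})-r_{\sf M}(S_v)$.) Finally \eqref{dknjkbduzvreach} is exactly \eqref{ljhfygdtu2} with the chosen value of $\ell'$.

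Having checked all the hypotheses, Theorem \ref{bboboiboreach} produces an ${\sf M}$-reachability-based packing of arborescences whose total number of arborescences, i.e. whose total number of roots counted with multiplicity, equals $(\sum_{v\in V}r_{\sf M}(S_{P_v}))-|A|$. The last step is to convert this packing into a \emph{decomposition} by an edge-counting argument: if $B$ denotes the arc set used, then $|B|=\sum_{v\in V}d_B^-(v)=\sum_{v\in V}(r_{\sf M}(S_{P_v})-|S^*_v|)=(\sum_{v\in V}r_{\sf M}(S_{P_v}))-|S^*|=|A|$, where $|S^*|$ is the total number of roots and we use that each vertex $v$ is covered by exactly $r_{\sf M}(S_{P_v})$ arborescences (one incoming arc per covering arborescence other than those rooting at $v$). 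Since $B\subseteq A$ and $|B|=|A|$, we conclude $B=A$, so the packing is a decomposition.

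The main obstacle I expect is the bookkeeping in the edge-counting identity: one must argue carefully that in any ${\sf M}$-reachability-based packing the number of arborescences through $v$ is forced to equal $r_{\sf M}(S_{P_v})$ (the rank of the reachability-restricted matroid), and that each such arborescence contributes exactly one arc entering $v$ unless it is rooted at $v$. The definitions guarantee $R^{\mathcal B}_v$ is a \emph{basis} of $S_{P_v}$, which pins down its size as $r_{\sf M}(S_{P_v})$, so the degree/root bookkeeping goes through; nonetheless this is the step where the reachability (as opposed to the plain matroid-based) setting differs from the proof of Theorem \ref{vouyfy}, since $r_{\sf M}(S)$ is replaced by the vertex-dependent $r_{\sf M}(S_{P_v})$, and care is needed to ensure all sums are taken over the correct reachability sets.
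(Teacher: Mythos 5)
Your proposal is correct and follows essentially the same route as the paper: set $\ell=\ell'=\sum_{v\in V}r_{\sf M}(S_{P_v})-|A|$, derive \eqref{jbuoouou} from \eqref{GYcond2} applied to the singleton petals, invoke Theorem \ref{bboboiboreach}, and close with the in-degree/root counting identity $|B|=\sum_{v\in V}(r_{\sf M}(S_{P_v})-|S^*_v|)=|A|$ to upgrade the packing to a decomposition. The only (harmless) deviation is citing Theorem \ref{thmGY}(b) in the necessity direction, where part (a) suffices since the packing need not be complete and \eqref{matcondori1} is not needed.
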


 \begin{proof}
Let $(D=(V,A),S,{\sf M})$ be an instance of Theorem \ref{vouyfy2}. To see the {\bf necessity}, suppose that there exists a decomposition of $A$ into an ${\sf M}$-reachability-based packing of arborescences  in $D$ with root set $S^*.$ Then, by Theorem \ref{thmGY}, \eqref{GYcond2}  holds. Since $|S^*|=\sum_{v\in V}|S^*_v|=\sum_{v\in V}(r_{\sf M}(S_{P_v})-d_A^-(v))=\sum_{v\in V}r_{\sf M}(S_{P_v})-|A|$, by  Theorem \ref{bboboiboreach},
 \eqref{dknjkbduzvreach} holds for $\ell'=\sum_{v\in V}r_{\sf M}(S_{P_v})-|A|$ and hence \eqref{ljhfygdtu2} holds.

To see the {\bf sufficiency}, suppose that \eqref{GYcond2} and \eqref{ljhfygdtu2} hold. Let $\ell=\ell'=\sum_{v\in V}r_{\sf M}(S_{P_v})-|A|.$ Then   \eqref{ellell}, \eqref{GYcond2} (and hence \eqref{GYcond}) and \eqref {dknjkbduzvreach}  hold. By \eqref{GYcond2} applied for all $v\in V,$ we get that $\sum_{v\in V}r_{\sf M}(S_v)\ge\sum_{v\in V}(r_{\sf M}(S_{P_v})-d_A^-(v))=\sum_{v\in V}r_{\sf M}(S_{P_v})-|A|,$ so \eqref{jbuoouou} also holds. Hence, by Theorem \ref{bboboiboreach}, there exists an ${\sf M}$-reachability-based  packing of  arborescences  in $D$ with arc set $B$ and root set $S^*$ such that $|S^*|=\sum_{v\in V}(r_{\sf M}(S_{P_v})-|A|$. Since  $|B|=\sum_{v\in V}d_B^-(v)=\sum_{v\in V}(r_{\sf M}(S_{P_v})-|S^*_v|)=\sum_{v\in V}r_{\sf M}(S_{P_v})-|S^*|=|A|$, we have in fact a decomposition of $A,$ and the proof of Theorem \ref{vouyfy} is complete.
 \end{proof}

\subsection{Packing of branchings}

We complete the section on packings in digraphs by some results on packing branchings that can be derived from our results.
\medskip

Let  $D=(V,A)$ be a digraph, ${\cal S}$ a family of subsets of $V$, and  ${\sf M}=(\mathcal{S},r_{\sf M})$ a matroid.
Let {\boldmath$\hat S$} $=\bigcup_{S\in {\cal S}}S$ where the union is taken by multiplicities, so $\hat S$ is a multiset of $V$ and ${\cal S}$ is a partition of $\hat S.$ Recall that {\boldmath${\sf M}_{\cal S}^1$} is the partition matroid on $\hat S$ with value $1$ on each $S\in {\cal S}.$ Let {\boldmath$\hat{{\sf p}}_{\mathcal{S}}({\sf X})$} $=r_{{\sf M}}(\mathcal{S}_{P_{C_{\sf X}}})-r_{{\sf M}}(\mathcal{S}_{X_O})$ if ${\sf X}\in \hat{\mathcal{Z}}_{\sf b}$ and $0$ otherwise.

\begin{thm}[Edmonds \cite{Egy}]\label{edmondsbranchings}
Let $D=(V,A)$ be a digraph and ${\cal S}$ a family of subsets of $V$.  There exists a packing of spanning $S$-branchings $(S\in {\cal S})$ in $D$ if and only if 
	\begin{eqnarray}\label{eed}
		|{\cal S}_X|+d^-_A(X)&\geq &|{\cal S}| \hskip .5truecm \text{ for every non-empty  $X\subseteq V.$}
	\end{eqnarray}
\end{thm}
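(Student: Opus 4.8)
The plan is to derive \Cref{edmondsbranchings} from the matroid-based packing theorem of Durand de Gevigney, Nguyen, and Szigeti (\Cref{thmddgnsz}), applied to the partition matroid ${\sf M}={\sf M}_{\cal S}^1$ on the ground multiset $\hat S=\bigcup_{S\in{\cal S}}S$, whose parts are exactly the members of ${\cal S}$, each with capacity $1$. The heart of the argument is the observation that a \emph{complete} ${\sf M}_{\cal S}^1$-based packing of arborescences in $D$ is the very same object as a packing of spanning $S$-branchings $(S\in{\cal S})$: grouping the arborescences of the packing according to the part of $\hat S$ to which their root belongs, the arborescences rooted in a fixed $S\in{\cal S}$ together form a spanning $S$-branching, and conversely every spanning $S$-branching decomposes into the arborescences rooted at the vertices of $S$. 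Thus it suffices to run \Cref{thmddgnsz} for this matroid and translate its conditions.

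To make the correspondence precise I would argue as follows. A basis of ${\sf M}_{\cal S}^1$ is a transversal of ${\cal S}$, i.e.\ a set containing exactly one element from each part. Hence, for a complete ${\sf M}_{\cal S}^1$-based packing ${\cal B}$, the requirement that $R^{\cal B}_v$ be a basis of ${\sf M}_{\cal S}^1$ for every $v\in V$ says exactly that, for each part $S\in{\cal S}$, precisely one arborescence with root in $S$ contains $v$. So the union of the arborescences rooted in $S$ meets every vertex of $V$ by a unique path from $S$, which is a spanning $S$-branching; completeness guarantees that every vertex of $S$ is indeed used as a root, and arc-disjointness of ${\cal B}$ yields arc-disjointness of the branchings. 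Reading the same equivalence backwards shows that any packing of spanning $S$-branchings gives a complete ${\sf M}_{\cal S}^1$-based packing, so the two existence statements coincide. Note moreover that \eqref{matcondori1} holds automatically here: the copies of a vertex $v$ present in $\hat S$ lie in pairwise distinct parts, so $\hat S_v$ picks at most one element per part and is therefore independent in ${\sf M}_{\cal S}^1$.

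It then remains to translate the rank condition \eqref{ddgnszcond}. For the partition matroid one computes, for every $Z\subseteq V$, that $r_{{\sf M}_{\cal S}^1}(\hat S_Z)=\sum_{S\in{\cal S}}\min\{|\hat S_Z\cap S|,1\}=|{\cal S}_Z|$, since a part $S$ contributes $1$ exactly when it has a vertex in $Z$, i.e.\ when $S\in{\cal S}_Z$; in particular $r_{{\sf M}_{\cal S}^1}(\hat S)=|{\cal S}|$. Substituting these into \eqref{ddgnszcond} turns it into $|{\cal S}_Z|+d^-_A(Z)\ge|{\cal S}|$ for every non-empty $Z\subseteq V$, which is precisely \eqref{eed}. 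Consequently, assuming \eqref{eed}, \Cref{thmddgnsz} (whose hypothesis \eqref{matcondori1} is vacuous here) produces a complete ${\sf M}_{\cal S}^1$-based packing of arborescences, which by the correspondence above is exactly the desired packing of spanning $S$-branchings. One could equally route this through \Cref{bboboibo} with $\ell=\ell'=|{\cal S}|$, but the direct appeal to \Cref{thmddgnsz} is cleaner.

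The individual computations are routine; the only point requiring genuine care — and hence the main obstacle — is the bookkeeping in the correspondence between ${\sf M}_{\cal S}^1$-based packings and packings of branchings. Specifically, the hard part will be to verify rigorously that the per-vertex transversal (basis) condition is equivalent to each $S$-branching covering every vertex of $V$ exactly once, and to keep track of the multiset structure of $\hat S$ carefully enough to see that \eqref{matcondori1} is automatically satisfied so that \Cref{thmddgnsz} applies without any additional hypothesis.
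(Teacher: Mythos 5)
Your proposal is correct and follows exactly the route the paper itself indicates for \Cref{edmondsbranchings}, namely applying \Cref{thmddgnsz} to the partition matroid ${\sf M}_{\cal S}^1$ on $\hat S$ and checking that \eqref{matcondori1} is automatic and that \eqref{ddgnszcond} translates to \eqref{eed} via $r_{{\sf M}_{\cal S}^1}(\hat S_Z)=|{\cal S}_Z|$. (The paper's only fully written-out derivation instead routes through \Cref{bfurccsuj} with $\ell=\ell'=\sum_{S\in{\cal S}}|S|$, but that is an equivalent amount of work and your direct appeal to \Cref{thmddgnsz} is the cleaner of the two.)
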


For ${\cal S}=\{\{s\}: s\in S\},$ Theorem \ref{edmondsbranchings} reduces to Theorem \ref{edmondsarborescencesmulti}.
Theorem \ref{edmondsbranchings} can be obtained from Theorem \ref{thmddgnsz} for the  matroid ${\sf M}_{\cal S}^1$.
We note that to have  a packing of spanning $S$-branchings $(S\in {\cal S})$ in $D$ it is sufficient that \eqref{eed} holds for every subatom.
\medskip

We  provide a  generalization of Theorem  \ref{edmondsbranchings} with bounds on the total number of roots.

\begin{thm}\label{bfurccsuj}
Let $D=(V,A)$ be a digraph, ${\cal S}$ a family of subsets of $V$, and $\ell, \ell'\in\mathbb Z_+$. There exists  an $(\ell,\ell')$-limited packing of spanning $S'$-branchings $(\emptyset\neq S'\subseteq S\in {\cal S})$ in $D$  if and only \eqref{ellell} holds and 
 \begin{eqnarray}
 	\ell	&	\le	&	\sum_{S\in {\cal S}}|{S}|, \label{lkjejhcuheu2}\\
	|{\cal S}_X|+d^-_A(X)&\geq &|{\cal S}| \hskip .2truecm \text{ for every subatom } X \text{ of } D,\label{lkkjbkjvjh}\\
	\sum_{{\sf X}\in {\cal P}}(|{\cal S}|-|{\cal S}_{X_W}|-d_A^-(X_O))		&	\le	&	\ell'
	\hskip .4truecm  \text{ for every OW laminar  biset family } {\cal P} \text{ of subatoms}.\hskip .4truecm \label{lkjejhcuheu}
\end{eqnarray}
\end{thm}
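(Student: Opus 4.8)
The plan is to deduce \Cref{bfurccsuj} from \Cref{bboboibo} applied to the partition matroid ${\sf M}_{\cal S}^1$ on the multiset $\hat S$, in the same spirit in which \Cref{edmondsbranchings} is obtained from \Cref{thmddgnsz}. The first step is to set up the dictionary between the two families of objects. Reading the index $(\emptyset\neq S'\subseteq S\in{\cal S})$ as providing one spanning branching for each member $S$ of $\cal S$ (with root set a nonempty $S'\subseteq S$), I would show that such packings are in bijection with ${\sf M}_{\cal S}^1$-based packings of arborescences in $D$. In one direction, split each spanning $S'$-branching into its $|S'|$ arborescences, one per root; every $s\in\hat S$ then roots at most one arborescence, and since each spanning branching covers $V$ exactly once, for every $v\in V$ the root multiset $R^{\mathcal B}_v$ contains exactly one element of each member of $\cal S$, i.e.\ a basis of ${\sf M}_{\cal S}^1$. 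In the other direction, group the arborescences of an ${\sf M}_{\cal S}^1$-based packing by the member of $\cal S$ containing their root; the basis condition forces the arborescences rooted in a fixed $S$ to partition $V$, so they reassemble into a spanning $S'$-branching with $\emptyset\neq S'\subseteq S$. Under this correspondence the total number of arborescences equals the total number of roots, so an $(\ell,\ell')$-limited packing of one type yields an $(\ell,\ell')$-limited packing of the other.

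Next I would verify that the four conditions of \Cref{bboboibo} specialize to the four conditions of \Cref{bfurccsuj} when ${\sf M}={\sf M}_{\cal S}^1$. From the rank formula $r_{{\sf M}_{\cal S}^1}(\hat S')=\sum_{S\in{\cal S}}\min\{|\hat S'\cap S|,1\}$ one reads off $r_{\sf M}(\hat S_X)=|{\cal S}_X|$ for every $X\subseteq V$ (a member contributes $1$ precisely when it meets $X$), hence $r_{\sf M}(\hat S)=|{\cal S}|$, and $\sum_{v\in V}r_{\sf M}(\hat S_v)=\sum_{v\in V}|\{S\in{\cal S}:v\in S\}|=\sum_{S\in{\cal S}}|S|$. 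Substituting these identities, \eqref{ellell} is left unchanged, \eqref{jbuoouou} turns into \eqref{lkjejhcuheu2}, the subatom inequality \eqref{kjbvgc} turns into \eqref{lkkjbkjvjh}, and the OW laminar inequality \eqref{dknjkbduzv} turns into \eqref{lkjejhcuheu}. \Cref{bboboibo} then yields the desired equivalence at once, handling necessity and sufficiency together.

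The only genuinely non-routine point, and the one I would spell out carefully, is the object-level correspondence and the fact that it is a bijection preserving the root count. In particular I must justify that it is correct to allow only one spanning branching per member of $\cal S$: two branchings rooted inside the same $S$ would place two elements of a single part into some $R^{\mathcal B}_v$, which is dependent in ${\sf M}_{\cal S}^1$ and hence not a basis. I should also note that the nonemptiness $\emptyset\neq S'$ is automatic, since a spanning branching of the nonempty vertex set $V$ must possess at least one root. Once this correspondence is established, the remainder is a direct substitution into \Cref{bboboibo}, so I anticipate no further difficulty.
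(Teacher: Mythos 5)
Your proposal is correct and follows essentially the same route as the paper: reduce to \Cref{bboboibo} via the partition matroid ${\sf M}_{\cal S}^1$ on $\hat S$, use $r_{{\sf M}_{\cal S}^1}(\hat S_Z)=|{\cal S}_Z|$ to translate the four conditions, and regroup the arborescences rooted in a fixed $S\in{\cal S}$ into a spanning $S'$-branching (the paper only spells out sufficiency, leaving necessity as a direct check, while you also make the converse correspondence explicit). No gaps.
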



Theorem \ref{bfurccsuj} can be extended even to matroid-reachability-based packings as follows. 

\begin{thm} \label{bboboiboreachbr}
Let $D=(V,A)$ be a  digraph, $\mathcal{S}$ a family of subsets of $V$, $\ell,\ell'\in\mathbb{Z}_+$, and ${\sf M}=(\mathcal{S},r_{\sf M})$ a matroid. There exists an ${\sf M}$-reachability-based $(\ell,\ell')$-limited packing of  $S'$-branchings $(S'\subseteq S\in {\cal S})$ in $D$ if and only if \eqref{ellell} holds and 
\begin{eqnarray} 
		\sum_{v\in V}r_{\sf M}(\mathcal{S}_v)	&	\ge	&	\ell,	\label{jbuoououbr}	\\
	{\sf d}_A^-({\sf X}) 	&\ge	&	\hat{{\sf p}}_{\mathcal{S}}({\sf X}) \hskip .44truecm\text{ for every  biset {\sf X} on $V$,}  \label{iuvuoyct}\\
	\sum_{{\sf X}\in\mathcal{P}}(r_{\sf M}(\mathcal{S}_{P_{X_I}})-r_{\sf M}(\mathcal{S}_{X_W})-d_A^-(X_O))& \le	&\ell'		\hskip .4truecm  \text{ for every OW laminar biset family } {\cal P} \text{ of }  \mathcal{X}.\hskip .8truecm	\label{dknjkbduzvreachbr}
\end{eqnarray}
\end{thm}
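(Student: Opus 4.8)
The plan is to derive \Cref{bboboiboreachbr} from the arborescence result \Cref{bboboiboreach} by means of a standard matroid-replacement reduction, exactly in the spirit of how \Cref{edmondsbranchings} is obtained from \Cref{thmddgnsz} via the partition matroid $\mathsf{M}_{\cal S}^1$. The key idea is that an $S'$-branching with $\emptyset\neq S'\subseteq S$ and root-set behaving as a basis of a restriction of $\mathsf{M}=({\cal S},r_{\sf M})$ can be simulated by arborescences rooted at individual vertices of the multiset $\hat S=\bigcup_{S\in{\cal S}}S$, provided we equip $\hat S$ with an appropriate matroid. Concretely, I would build a matroid $\mathsf{N}$ on the ground set $\hat S$ whose independent sets are precisely the multisets that, after contracting each block $S\in{\cal S}$ to the single matroid element $S$, are independent in $\mathsf{M}$; this is the matroid induced by the partition ${\cal S}$ together with $\mathsf{M}$ (the "matroid union / induction along a partition" construction). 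The rank of $\mathsf{N}$ on a subset $T\subseteq\hat S$ is then $r_{\sf M}$ of the set of blocks meeting $T$ with at most one representative kept from each block, so that $r_{\sf N}(\hat S_X)=r_{\sf M}({\cal S}_X)$ and more generally $r_{\sf N}(\hat S_{P_{X_I}})=r_{\sf M}({\cal S}_{P_{X_I}})$ for every relevant set. The bridge lemma to isolate first is that an $\mathsf{N}$-reachability-based packing of arborescences on $(D,\hat S,\mathsf N)$ corresponds exactly to an $\mathsf{M}$-reachability-based packing of $S'$-branchings on $(D,{\cal S},\mathsf M)$, with the number of arborescences equal to the number of branchings, so the $(\ell,\ell')$-limit is preserved.

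Granting that correspondence, the proof becomes a translation of the three conditions of \Cref{bboboiboreach} applied to $(D,\hat S,\ell,\ell',\mathsf N)$ into the three conditions \eqref{jbuoououbr}, \eqref{iuvuoyct}, \eqref{dknjkbduzvreachbr} of \Cref{bboboiboreachbr}. First I would check the lower-bound condition: \eqref{jbuoouou} for $\mathsf N$ reads $\sum_{v\in V}r_{\sf N}(\hat S_v)\ge\ell$, and since $r_{\sf N}(\hat S_v)=r_{\sf M}({\cal S}_v)$ by the rank formula above, this is exactly \eqref{jbuoououbr}. Second, the GY-condition \eqref{GYcond} for $\mathsf N$ says ${\sf d}_A^-({\sf X})\ge\hat{\sf p}({\sf X})$ where $\hat{\sf p}$ is built from $r_{\sf N}$; since $r_{\sf N}(\hat S_{P_{C_{\sf X}}})-r_{\sf N}(\hat S_{X_O})=r_{\sf M}({\cal S}_{P_{C_{\sf X}}})-r_{\sf M}({\cal S}_{X_O})=\hat{\sf p}_{\cal S}({\sf X})$ on petal bisets and both vanish elsewhere, this is precisely \eqref{iuvuoyct}. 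Third, the upper-bound condition \eqref{dknjkbduzvreach} for $\mathsf N$ over OW laminar families of $\mathcal X$ becomes, after the same substitution $r_{\sf N}(\hat S_{\bullet})=r_{\sf M}({\cal S}_{\bullet})$, exactly \eqref{dknjkbduzvreachbr}. Thus each of the conditions matches term by term, and \eqref{ellell} is common to both theorems.

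The main obstacle I expect is establishing the correspondence lemma rigorously, and in particular verifying that $\mathsf N$ is genuinely a matroid and that its rank function satisfies $r_{\sf N}(T)=r_{\sf M}(\{S\in{\cal S}: S\cap T\neq\emptyset\})$ on every relevant set. The delicate point is the reachability aspect: the reachability basis condition for $\mathsf N$ at a vertex $v$ must translate into the branching condition that the collection of root-blocks of the branchings through $v$ forms a basis of ${\cal S}_{P^D_v}$ in $\mathsf M$, and one must confirm that allowing an $S'$-branching with several roots from the same block $S$ is never beneficial and never forced — equivalently, that in an $\mathsf N$-reachability-based packing, the arborescences whose roots lie in a common block $S$ can be assumed to be pairwise arc-disjoint and can be amalgamated into a single $S'$-branching without changing reachability or the count. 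Here the parallelism built into $\mathsf N$ (all elements of a single block $S$ are pairwise parallel in $\mathsf N$, by the rank formula) is what guarantees that at most one element per block ever appears in a reachability basis, so the root-sets of the packing intersect each block in at most one element and the merge is automatic. Once this is checked, the reduction is a clean substitution. I would also remark, as the paper does for the other implications, that the necessity direction can be obtained directly, so it suffices to present the sufficiency via the reduction above.
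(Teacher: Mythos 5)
Your proposal is correct and follows essentially the same route as the paper: the paper defines the matroid ${\sf M}'_{\cal S}$ on the multiset $\hat S=\bigcup_{S\in{\cal S}}S$ by making the elements of each block $S$ parallel copies of the element $S$ of ${\sf M}$ (i.e.\ $r_{{\sf M}'_{\cal S}}(S')=r_{\sf M}(\mathcal{S}_{S'})$, exactly your rank formula), observes that the hypotheses of \Cref{bboboiboreachbr} translate term by term into \eqref{ellell}, \eqref{jbuoouou}, \eqref{GYcond} and \eqref{dknjkbduzvreach} for $(D,\hat S,\ell,\ell',{\sf M}'_{\cal S})$, and applies \Cref{bboboiboreach}, recovering the branchings from the arborescences via the parallelism argument you describe. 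Your only imprecision is in characterizing the independent sets of your matroid $\mathsf N$ (a set containing two elements of one block is dependent, which your "contract each block" phrasing glosses over), but your rank formula and the parallelism argument for amalgamating same-block arborescences into vertex-disjoint branchings are exactly what the paper relies on.
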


Let us show some implications between these results.
\begin{cl}
Theorem \ref{bboboibo} implies Theorem \ref{bfurccsuj}.
\end{cl}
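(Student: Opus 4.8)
The plan is to derive Theorem~\ref{bfurccsuj} from Theorem~\ref{bboboibo} by applying the latter to the partition matroid ${\sf M}={\sf M}_{\cal S}^1$ on the ground set $\hat S=\bigcup_{S\in{\cal S}}S$. The key observation is that an $(\ell,\ell')$-limited packing of spanning $S'$-branchings with $\emptyset\neq S'\subseteq S\in{\cal S}$ corresponds exactly to an ${\sf M}_{\cal S}^1$-based $(\ell,\ell')$-limited packing of arborescences. Indeed, in the partition matroid a basis of $\hat S$ picks exactly one element from each part $S\in{\cal S}$, so a complete matroid-based condition forces, for every vertex $v$, the root multiset $R^{\cal B}_v$ to contain exactly one root from each part. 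Grouping together the arborescences whose roots lie in a common part $S$ yields a spanning $S'$-branching with $S'\subseteq S$; conversely, splitting each $S'$-branching into arborescences rooted at the elements of $S'$ recovers the matroid-based packing. The count of arborescences (the quantity bounded by $\ell,\ell'$ in Theorem~\ref{bboboibo}) equals the total number of roots used, which is the quantity $\sum_{S\in{\cal S}}|S'_S|$ appearing in the branching formulation.

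With this dictionary in hand, I would translate each of the four conditions of Theorem~\ref{bboboibo} into the corresponding condition of Theorem~\ref{bfurccsuj} using the explicit rank function $r_{{\sf M}_{\cal S}^1}(\hat S')=\sum_{S\in{\cal S}}\min\{|\hat S'\cap S|,1\}=|{\cal S}_{X}|$ when $\hat S'=\hat S_X$, since a part $S$ contributes $1$ precisely when it intersects $X$. Thus $r_{\sf M}(\hat S)=|{\cal S}|$, and $r_{\sf M}(\hat S_X)=|{\cal S}_X|$ for every $X\subseteq V$. Under this identification, condition~\eqref{kjbvgc} becomes exactly~\eqref{lkkjbkjvjh}; condition~\eqref{jbuoouou} becomes $\sum_{v\in V}|{\cal S}_v|\ge\ell$, which I would need to reconcile with~\eqref{lkjejhcuheu2}; the OW-laminar condition~\eqref{dknjkbduzv} with ${\sf f}({\sf X})=r_{\sf M}(\hat S)-r_{\sf M}(\hat S_{X_W})-d_A^-(X_O)$ becomes precisely the left-hand side of~\eqref{lkjejhcuheu} upon substituting $r_{\sf M}(\hat S)=|{\cal S}|$ and $r_{\sf M}(\hat S_{X_W})=|{\cal S}_{X_W}|$; and~\eqref{ellell} is shared verbatim.

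The step I expect to be the main obstacle is matching the lower-bound conditions~\eqref{jbuoouou} and~\eqref{lkjejhcuheu2}. The direct translation gives $\sum_{v\in V}r_{\sf M}(\hat S_v)=\sum_{v\in V}|{\cal S}_v|\ge\ell$, whereas Theorem~\ref{bfurccsuj} instead demands $\ell\le\sum_{S\in{\cal S}}|S|$. These are genuinely different quantities: $\sum_{v\in V}|{\cal S}_v|$ counts, vertex by vertex, how many parts have an element at that vertex, while $\sum_{S\in{\cal S}}|S|=|\hat S|$ is the total number of roots available. I would resolve this by noting that in a spanning branching \emph{every} vertex of every part can in principle serve as an independent root at its own location, so the relevant feasibility count is the number of available roots $|\hat S|$, and I would argue that~\eqref{lkkjbkjvjh} together with~\eqref{ellell} already forces $\sum_{v\in V}|{\cal S}_v|$ to be large enough whenever $\ell\le|\hat S|$; more carefully, one should check that the weakest among the two lower bounds is implied by the branching-side hypothesis, so that the matroid-side hypothesis~\eqref{jbuoouou} is satisfied. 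A clean way to handle this is to observe that since each $S'$-branching is spanning and $S'$ may be any nonempty subset of $S$, the attainable number of arborescences ranges over all of $\{|{\cal S}|,\dots,|\hat S|\}$ subject to the partition structure, so $\ell\le|\hat S|$ is exactly the correct lower-bound constraint; I would then verify that under~\eqref{lkkjbkjvjh} the matroid-based lower bound $\sum_{v\in V}|{\cal S}_v|\ge|{\cal S}|\ge\ell$ holds in the regime where~\eqref{lkkjbkjvjh} guarantees a spanning packing exists, completing the reduction.
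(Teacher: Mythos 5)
Your overall reduction is the one the paper uses: apply Theorem \ref{bboboibo} to the partition matroid ${\sf M}_{\cal S}^1$ on $\hat S=\bigcup_{S\in{\cal S}}S$, translate ranks via $r_{{\sf M}_{\cal S}^1}(\hat S_X)=|{\cal S}_X|$, and regroup the resulting arborescences by part into vertex-disjoint spanning branchings. Your translations of \eqref{kjbvgc} into \eqref{lkkjbkjvjh} and of \eqref{dknjkbduzv} into \eqref{lkjejhcuheu} are correct, as is the dictionary between ${\sf M}_{\cal S}^1$-based packings of arborescences and packings of spanning branchings.

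However, the step you single out as the main obstacle contains a genuine error. The quantities $\sum_{v\in V}|{\cal S}_v|$ and $\sum_{S\in{\cal S}}|S|$ are not ``genuinely different'': since each $S\in{\cal S}$ is a subset of $V$, counting vertex--part incidences in the two possible orders gives $\sum_{v\in V}|{\cal S}_v|=\sum_{S\in{\cal S}}|S|$. Equivalently, $\hat S_v$ contains at most one element of each part, hence is independent in ${\sf M}_{\cal S}^1$, so $r_{{\sf M}_{\cal S}^1}(\hat S_v)=|\hat S_v|$ and $\sum_{v\in V}r_{{\sf M}_{\cal S}^1}(\hat S_v)=|\hat S|=\sum_{S\in{\cal S}}|S|\ge\ell$ by \eqref{lkjejhcuheu2}; this is exactly \eqref{jbuoouou}, and it is how the paper handles the point. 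Your proposed workaround, deriving $\sum_{v\in V}|{\cal S}_v|\ge|{\cal S}|\ge\ell$, is not valid: $|{\cal S}|\ge\ell$ is not among the hypotheses of Theorem \ref{bfurccsuj} and can fail, since \eqref{lkjejhcuheu2} only bounds $\ell$ by $\sum_{S\in{\cal S}}|S|$, which may greatly exceed $|{\cal S}|$ (and indeed the whole point of the $(\ell,\ell')$-limited branching theorem is to allow more than $|{\cal S}|$ roots). So the argument as outlined would break at exactly the step you identified, even though the correct fix is a one-line exchange of the order of summation.
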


\begin{proof}
 Let $(D=(V,A), {\cal S}, \ell,\ell')$ be an instance of Theorem \ref{bfurccsuj} satisfying \eqref{ellell}, \eqref{lkjejhcuheu2}, \eqref{lkkjbkjvjh} and \eqref{lkjejhcuheu}. Then $(D, \hat S, \ell,\ell',{\sf M}_{\cal S}^1)$ is an instance of Theorem \ref{bboboibo}. We now show that it satisfies \eqref{kjbvgc}, \eqref{ellell}, \eqref{jbuoouou},  and \eqref{dknjkbduzv}. By assumption, \eqref{ellell} holds. By \eqref{lkjejhcuheu2},  we have $\sum_{v\in V}r_{{\sf M}_{\cal S}^1}(\hat S_v)=\sum_{v\in V}|\hat S_v|=\sum_{S\in {\cal S}}|{S}|\ge\ell,$ so \eqref{jbuoouou} holds for  $(D, \hat S,  \ell,\ell',{\sf M}_{\cal S}^1)$. Since  $r_{{\sf M}_{\cal S}^1}(\hat S_Z)=|{\cal S}_Z|$ for all $Z\subseteq V,$  \eqref{lkkjbkjvjh} implies that \eqref{kjbvgc}  holds and \eqref{lkjejhcuheu} implies that \eqref{dknjkbduzv}  holds for  $(D, \hat S,  \ell,\ell',{\sf M}_{\cal S}^1)$.  
Thus, by Theorem \ref{bboboibo}, there exists an ${{\sf M}_{\cal S}^1}$-based $(\ell,\ell')$-limited  packing of  $s$-arborescences $(s\in S^*\subseteq \hat S)$ in $D$. For every $S\in\mathcal{S},$ by the definition of ${\sf M}_{\cal S}^1$, the $s$-arborescences in the packing with $s\in S$ are vertex disjoint and hence form an $S'$-branching with $S'\subseteq S$. Since the packing is ${{\sf M}_{\cal S}^1}$-based and $r_{{\sf M}_{\cal S}^1}(\hat S)=|{\cal S}|,$ each $S'$-branching is spanning. We hence have  an $(\ell,\ell')$-limited packing of spanning $S'$-branchings $(\emptyset\neq S'\subseteq S\in {\cal S})$ in $D$. This completes the proof of Theorem \ref{bfurccsuj}.
\end{proof}

\begin{cl}
Theorem \ref{bfurccsuj} implies Theorem \ref{edmondsbranchings}.
\end{cl}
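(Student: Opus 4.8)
The plan is to prove only the sufficiency, as announced for all the implications in the introduction; the necessity can be checked directly. So I assume that $(D,{\cal S})$ satisfies the Edmonds condition \eqref{eed}, and I set $\ell=\ell'=\sum_{S\in{\cal S}}|S|$, the largest number of arborescences one could hope for. I would then apply \Cref{bfurccsuj} to the instance $(D,{\cal S},\ell,\ell')$ and check its four hypotheses, after which the returned packing will be forced to be complete. Three of the hypotheses are immediate: \eqref{ellell} and \eqref{lkjejhcuheu2} hold because $\ell=\ell'=\sum_{S\in{\cal S}}|S|$, and \eqref{lkkjbkjvjh} is exactly \eqref{eed} specialised to subatoms, which are non-empty.

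The heart of the argument is \eqref{lkjejhcuheu}. Let ${\cal P}$ be an OW laminar biset family of subatoms. I would first note that OW-laminarity forces the inner sets $\{X_I:{\sf X}\in{\cal P}\}$ to be pairwise disjoint: for a subatom the core equals the outer set, so two bisets are core-intersecting precisely when their outer sets meet, and in that case one outer set lies in the wall of the other and is therefore disjoint from the latter's inner set; when the outer sets are disjoint the inner sets are trivially disjoint. Next, for each ${\sf X}\in{\cal P}$, applying \eqref{eed} to the subatom $X_O$ gives $|{\cal S}|-d_A^-(X_O)\le|{\cal S}_{X_O}|$, and since every member of ${\cal S}$ that meets $X_O$ but not $X_W$ must meet $X_I$, we get $|{\cal S}_{X_O}|-|{\cal S}_{X_W}|\le|{\cal S}_{X_I}|$; combining,
\[
|{\cal S}|-|{\cal S}_{X_W}|-d_A^-(X_O)\ \le\ |{\cal S}_{X_I}|.
\]

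Summing over ${\cal P}$ and exchanging the order of summation, the disjointness of the inner sets means that each $S\in{\cal S}$ meets at most $|S|$ of them, whence
\[
\sum_{{\sf X}\in{\cal P}}\bigl(|{\cal S}|-|{\cal S}_{X_W}|-d_A^-(X_O)\bigr)\ \le\ \sum_{{\sf X}\in{\cal P}}|{\cal S}_{X_I}|\ \le\ \sum_{S\in{\cal S}}|S|\ =\ \ell',
\]
which is \eqref{lkjejhcuheu}. This double-counting step is the one place requiring care, as it is exactly where the partition structure of ${\cal S}$ over $\hat S$ and the disjointness extracted from OW-laminarity have to be married; I expect it to be the main obstacle, everything else being bookkeeping.

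Finally, \Cref{bfurccsuj} produces an $(\ell,\ell')$-limited packing of spanning $S'$-branchings $(\emptyset\neq S'\subseteq S\in{\cal S})$. Each spanning $S'$-branching is the arc-disjoint union of $|S'|$ arborescences, so the total number of arborescences equals $\sum_{S\in{\cal S}}|S'|$, and the $(\ell,\ell')$-limitation forces this to equal $\ell=\ell'=\sum_{S\in{\cal S}}|S|$. Since $|S'|\le|S|$ for every $S\in{\cal S}$, this forces $S'=S$ throughout, that is, the packing is a packing of spanning $S$-branchings $(S\in{\cal S})$, which is what \Cref{edmondsbranchings} asserts.
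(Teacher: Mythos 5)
Your proof is correct and follows essentially the same route as the paper: the same choice $\ell=\ell'=\sum_{S\in{\cal S}}|S|$, the same chain $|{\cal S}|-|{\cal S}_{X_W}|-d_A^-(X_O)\le|{\cal S}_{X_O}|-|{\cal S}_{X_W}|\le|{\cal S}_{X_I}|$ combined with the disjointness of the inner sets, and the same completeness argument at the end. The only difference is that you spell out why OW-laminarity makes the inner sets disjoint, which the paper states without proof.
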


\begin{proof}
 Let  $(D,{\cal S})$ be an instance of Theorem \ref{edmondsbranchings} such  that \eqref{eed} holds. Let $\ell=\ell'=\sum_{S\in {\cal S}}|{S}|.$ Note that \eqref{ellell}, \eqref{lkjejhcuheu2}  trivially hold and, by \eqref{eed},  \eqref{lkkjbkjvjh} holds. Let ${\cal P}$ be  any OW laminar  biset family of subatoms. Then the inner sets of the bisets in ${\cal P}$ are disjoint. Thus, by \eqref{eed}, we have 
\begin{eqnarray*}
 \sum_{{\sf X}\in {\cal P}}(|{\cal S}|-|{\cal S}_{X_W}|-d_A^-(X_O))\le\sum_{{\sf X}\in {\cal P}}(|{\cal S}_{X_O}|-|{\cal S}_{X_W}|)\le\sum_{{\sf X}\in {\cal P}}|{\cal S}_{X_I}|\le \sum\limits_{\substack{{v\in \bigcup X_I}\\{\sf X}\in {\cal P}}}|{\cal S}_v|\le  \sum_{v\in V}|{\cal S}_v|=\sum_{S\in {\cal S}}|{S}|,
\end{eqnarray*}
  so \eqref{lkjejhcuheu} also holds. Hence, by Theorem  \ref{bfurccsuj}, there exists  a packing of spanning $S'$-branchings $(\emptyset\neq S'\subseteq S\in {\cal S})$ in $D$ such that $\sum_{S\in {\cal S}}|{S}'|=\sum_{S\in {\cal S}}|{S}|$, and Theorem \ref{edmondsbranchings} follows.
\end{proof}

\begin{cl}
Theorem \ref{bboboiboreach} implies Theorem \ref{bboboiboreachbr}.
\end{cl}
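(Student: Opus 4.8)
The plan is to reduce Theorem \ref{bboboiboreachbr} to Theorem \ref{bboboiboreach} by replacing the matroid ${\sf M}$ on the family $\mathcal{S}$ by a matroid ${\sf M}^*$ on the multiset $\hat S$, obtained by declaring any two elements of $\hat S$ lying in the same member $S\in\mathcal{S}$ to be parallel. Concretely, I would define ${\sf M}^*=(\hat S, r^*)$ by setting $r^*(T)=r_{\sf M}(\mathcal{S}_T)$ for every $T\subseteq\hat S$, where $\mathcal{S}_T$ is the set of members of $\mathcal{S}$ meeting $T$. First I would check that ${\sf M}^*$ is a matroid: $r^*(\emptyset)=0$, monotonicity follows from $\mathcal{S}_T\subseteq\mathcal{S}_{T'}$ when $T\subseteq T'$, subcardinality from $r_{\sf M}(\mathcal{S}_T)\le|\mathcal{S}_T|\le|T|$, and submodularity from $\mathcal{S}_{T\cup T'}=\mathcal{S}_T\cup\mathcal{S}_{T'}$ and $\mathcal{S}_{T\cap T'}\subseteq\mathcal{S}_T\cap\mathcal{S}_{T'}$ combined with the monotonicity and submodularity of $r_{\sf M}$. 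This is the standard parallel-extension of ${\sf M}$; it generalises the partition matroid ${\sf M}_{\cal S}^1$ used earlier, to which it reduces when ${\sf M}$ is free.

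Following the paper's convention I would only prove sufficiency. Assuming \eqref{ellell}, \eqref{jbuoououbr}, \eqref{iuvuoyct} and \eqref{dknjkbduzvreachbr}, I would verify that $(D,\hat S,\ell,\ell',{\sf M}^*)$ satisfies the hypotheses of Theorem \ref{bboboiboreach}. The key identity is $r_{{\sf M}^*}(\hat S_X)=r_{\sf M}(\mathcal{S}_X)$ for every $X\subseteq V$, since the members of $\mathcal{S}$ meeting $\hat S_X$ are exactly those meeting $X$. In particular $r_{{\sf M}^*}(\hat S_v)=r_{\sf M}(\mathcal{S}_v)$ turns \eqref{jbuoouou} into \eqref{jbuoououbr}; the function $\hat{\sf p}$ built from $({\sf M}^*,\hat S)$ coincides termwise with $\hat{\sf p}_{\mathcal S}$ on $\hat{\mathcal{Z}}_{\sf b}$, so \eqref{GYcond} becomes \eqref{iuvuoyct}; and likewise \eqref{dknjkbduzvreach} becomes \eqref{dknjkbduzvreachbr}. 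Thus Theorem \ref{bboboiboreach} yields an ${\sf M}^*$-reachability-based $(\ell,\ell')$-limited packing $\mathcal{B}$ of arborescences in $D$.

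The final step is to decompose $\mathcal{B}$ into the desired branching packing by grouping arborescences according to the member of $\mathcal{S}$ containing their root: for $S\in\mathcal{S}$, let $B_S$ be the union of the arborescences of $\mathcal{B}$ rooted at an element of $S$. The crucial observation is that any two elements of $\hat S$ in the same member $S$ are parallel in ${\sf M}^*$, hence cannot both belong to the basis $R^{\mathcal{B}}_v$ for any $v$; therefore no vertex is covered by two arborescences with roots in $S$, so these arborescences are vertex-disjoint and $B_S$ is an $S'$-branching for some $S'\subseteq S$. For every $v\in V$, the members of $\mathcal{S}$ whose branching contains $v$ are exactly those meeting $R^{\mathcal{B}}_v$, and since $R^{\mathcal{B}}_v$ is a basis of $\hat S_{P_v}$ in ${\sf M}^*$ that meets each member at most once and has cardinality $r_{\sf M}(\mathcal{S}_{P_v})$, this set of members is a basis of $\mathcal{S}_{P_v}$ in ${\sf M}$. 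Hence $\{B_S\}$ is ${\sf M}$-reachability-based, and since the number of arborescences of $\mathcal{B}$ equals $\sum_S|S'|$, it is $(\ell,\ell')$-limited.

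I expect the only genuinely delicate point to be this last translation of the matroid-basis condition: establishing, via parallelism in ${\sf M}^*$, that grouping by member of $\mathcal{S}$ yields vertex-disjoint arborescences and that the induced root-classes form the correct bases of $\mathcal{S}_{P_v}$. The verification that ${\sf M}^*$ is a matroid and the termwise identification of the four numerical conditions are routine once $r_{{\sf M}^*}(\hat S_X)=r_{\sf M}(\mathcal{S}_X)$ is in hand.
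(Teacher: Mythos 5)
Your proposal is correct and follows essentially the same route as the paper: the paper also passes to the parallel-extension matroid ${\sf M}'_{\cal S}$ on $\hat S$ defined by $r_{{\sf M}'_{\cal S}}(S')=r_{\sf M}(\mathcal{S}_{S'})$, applies Theorem \ref{bboboiboreach}, and regroups the resulting arborescences by the member of $\mathcal{S}$ containing their root. You merely spell out in more detail the steps the paper leaves implicit (that ${\sf M}'_{\cal S}$ is a matroid, the termwise translation of the conditions, and the parallelism argument giving vertex-disjointness within each group).
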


\begin{proof}
 Let $(D, {\sf M}=(\mathcal{S},r_{\sf M}), \ell,\ell')$ be an instance of Theorem \ref{bboboiboreachbr} satisfying \eqref{ellell}, \eqref{jbuoououbr},  \eqref{iuvuoyct}, and \ref{dknjkbduzvreachbr}. 
Let {\boldmath${\sf M}'_{\cal S}$} be the matroid on the multiset $\hat S=\bigcup_{S\in {\cal S}}S$ of vertices  obtained from ${\sf M}$ by replacing each $S\in {\cal S}$ by parallel elements on all $s\in S,$ that is $r_{ {\sf M}'_{\cal S}}(S')=r_{\sf M}(\mathcal{S}_{S'})$ for every $S'\subseteq \hat S.$ Then $(D, {\sf M}'_{\cal S}=(\hat S,r_{ {\sf M}'_{\cal S}}), \ell,\ell')$ is an instance of Theorem \ref{bboboiboreach} satisfying  the conditions \eqref{ellell},  \eqref{jbuoouou},  \eqref{GYcond}, and  \eqref{dknjkbduzvreach} for $\hat S.$ Thus, by  Theorem \ref{bboboiboreach}, there exists an ${\sf M}'_{\cal S}$-reachability-based $(\ell,\ell')$-limited packing of  arborescences in $D$. By the construction of ${\sf M}'_{\cal S}$, this provides an ${\sf M}$-reachability-based $(\ell,\ell')$-limited packing of  $S'$-branchings $(S'\subseteq S\in {\cal S})$ in $D$.
\end{proof}

We conclude this section by mentioning an NP-complete result on packing branchings.

\begin{thm} \label{femlbfmejbfkje}
Let $D=(V,A)$ be a digraph, ${\cal S}$ a family of subsets of $V$,  ${\sf M}$ a matroid on ${\cal S}.$ 
 It is NP-complete to decide whether there exists an {\sf M}-based packing of  $S$-branchings $(S\in {\cal S}')$ in $D$ with ${\cal S}'\subseteq {\cal S}$ and $|{\cal S}'|=k,$ even for the uniform matroid of rank $k.$
\end{thm}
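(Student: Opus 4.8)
The plan is to first settle membership in NP and then prove NP-hardness by a reduction from \textsc{Set Cover}. Membership in NP is immediate: a certificate consists of the chosen subfamily $\mathcal{S}'\subseteq\mathcal{S}$ together with the arc sets of the branchings in the packing; one checks in polynomial time that $|\mathcal{S}'|=k$, that the branchings are arc-disjoint, and that for every $v\in V$ the root-sets of the branchings through $v$ form a basis of $\mathsf{M}$. So the whole effort goes into the hardness direction.

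For the hardness I would reduce from \textsc{Set Cover}: given a universe $U=\{u_1,\dots,u_n\}$, a family $\mathcal{F}=\{F_1,\dots,F_m\}$ of subsets of $U$ with $m\ge k$ (which we may assume, since instances with $m<k$ are decided in polynomial time), and an integer $k$, decide whether some $k$ members of $\mathcal{F}$ cover $U$. I would build a digraph $D=(V,A)$ on $V=\{w,v_1,\dots,v_n\}$, where $w$ is a distinguished vertex and $v_i$ represents $u_i$. For each $i$ I put $k-1$ parallel arcs from $w$ to $v_i$, and no other arcs. For each $F_j$ I set $S_j=\{w\}\cup\{v_i:u_i\in F_j\}$, let $\mathcal{S}=\{S_1,\dots,S_m\}$, and let $\mathsf{M}$ be the uniform matroid of rank $k$ on $\mathcal{S}$. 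The intended correspondence is that a basis of $\mathsf{M}$, i.e. a $k$-subset $\mathcal{S}'\subseteq\mathcal{S}$, encodes a choice of $k$ members of $\mathcal{F}$, and the $\mathsf{M}$-based condition forces $R^{\mathcal{B}}_v=\mathcal{S}'$ for every $v$, hence every chosen $S$-branching to be spanning.

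The correctness argument rests on \Cref{edmondsbranchings} together with the remark that it suffices to verify the covering inequality on subatoms. Since $A$ contains only the arcs $w\to v_i$, the digraph $D$ is acyclic, so its atoms, and therefore its subatoms, are exactly the singletons. For a fixed $k$-subset $\mathcal{S}'$ the packing of spanning $S$-branchings $(S\in\mathcal{S}')$ thus exists if and only if $|\mathcal{S}'_X|+d^-_A(X)\ge k$ holds for $X=\{w\}$ and for each $X=\{v_i\}$. At $\{w\}$ we have $d^-_A(\{w\})=0$ and $w\in S$ for every $S\in\mathcal{S}$, so the inequality reads $k\ge k$ and always holds. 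At $\{v_i\}$ we have $d^-_A(\{v_i\})=k-1$, and $S_j$ meets $\{v_i\}$ exactly when $F_j\ni u_i$, so the inequality becomes the requirement that at least one chosen $F_j$ contains $u_i$. Hence a valid $\mathcal{S}'$ exists iff $k$ members of $\mathcal{F}$ cover $U$, where a cover of size smaller than $k$ is padded to exactly $k$ sets; padding only increases coverage and keeps every $S\ni w$ spanning, so feasibility is preserved. As the matroid is uniform of rank $k$, this establishes the claimed strengthening.

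The main obstacle, and precisely what the construction is engineered to overcome, is forcing Edmonds' global covering condition to collapse onto the set-cover constraints alone. Placing $w$ in every $S_j$ and using $w$ as the unique source simultaneously (i) guarantees that each chosen root-set reaches all of $V$, so spanning branchings exist as soon as the degree inequalities permit, (ii) makes $D$ acyclic so that the only subatoms are singletons, and (iii) neutralises the constraint at $\{w\}$. The surviving singleton constraints at the $v_i$ then read off exactly as the covering requirements, and the delicate point to verify carefully is that no other subatom can introduce a spurious obstruction; the acyclicity argument is what rules this out.
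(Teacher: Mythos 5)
Your proof is correct, but it takes a genuinely different route from the paper. The paper's proof is a one-line observation: for the uniform matroid of rank $k$, the $\mathsf{M}$-based condition forces each of the $k$ branchings to be spanning, so the problem coincides with deciding whether $k$ of the given root-sets admit a packing of spanning branchings, whose NP-completeness is simply cited from Theorem~3.6 of B\'erczi and Frank \cite{BF}. You instead make the hardness self-contained by a direct reduction from \textsc{Set Cover}, and your construction is sound: the observation that $R^{\mathcal B}_v$ must be a $k$-subset of a $k$-element $\mathcal S'$ correctly forces every branching to be spanning; the acyclicity of your gadget legitimately collapses the sufficiency check of \Cref{edmondsbranchings} to singletons (and for necessity the singleton inequalities $|\mathcal S'_{\{v_i\}}|+d^-_A(\{v_i\})\ge k$ follow directly from arc-disjointness); and the padding argument handles the ``exactly $k$'' versus ``at most $k$'' discrepancy. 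What the paper's citation buys is brevity; what your argument buys is transparency and independence from \cite{BF} --- in effect you re-derive the cited result, with a construction in the same spirit as the one used there. The only stylistic caveat is that NP membership for general matroids given by a rank oracle deserves a word of care, but since the claimed hard case is the uniform matroid, where independence is checked by counting, your certificate argument suffices for the statement as written.
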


\begin{proof}
In the special case when {\sf M} is the uniform matroid of rank $k,$ the problem is equivalent to whether there exists a packing of spanning $S$-branching $(S\in {\cal S}')$ in $D$ with ${\cal S}'\subseteq {\cal S}$ and $|{\cal S}'|=k,$ which is known to be NP-complete, see Theorem 3.6 in B\'erczi and Frank \cite{BF}.
\end{proof}


\section{Packings in directed hypergraphs}

 Let {\boldmath$\mathcal{D}$} $=(V,\mathcal{A})$ be a directed hypergraph, shortly {\it dypergraph}, where {\boldmath$\mathcal{A}$} is the set of dyperedges of $\mathcal{D}.$ A  {\it dyperedge} $e$ is an ordered pair $(Z,z)$, where $z\in V$ is the  {\it head} of $e$ and $\emptyset\neq Z\subseteq V-z$ is the set of {\it tails}  of $e.$  All of our previous results can be extended to directed hypergraphs  from  the corresponding graphic versions, by applying the  gadget of \cite{FKLSzT}.

\section{Proofs}

In this section we provide the main proofs of the paper.

%

\subsection{Proof of Theorem \ref{bboboiboreach1}}\label{pr11}

\begin{proof}
To prove the {\bf necessity}, let {\boldmath${B}_1,\dots,{B}_k$} be an ${\sf M}$-reachability-based packing of  arborescences in $D$ using a subset $A''$  of $A'$ of size at least $\ell$ at most $\ell'$. Then \eqref{ellell} holds. By the monotonicity of $r_{\sf M}$ and  since $N_{A''}^-(v)$ is independent in {\sf M}, we have $$\sum_{v\in V}r_{\sf M}(N_{A'}^-(v))\ge\sum_{v\in V}r_{\sf M}(N_{A''}^-(v))=\sum_{v\in V}d_{A''}^-(v)=|A''|\ge\ell,$$ so \eqref{jzbdjzbzb} also holds.
Let $\mathcal{Z}$ be an $A'$-disjoint subset of $\hat{\mathcal{Z}}.$ By the  necessity of Theorem \ref{thmCsaba} for $(V,A\cup A'')$ and $A''\subseteq A'$, we have  for every $Z\in\mathcal{Z},$ 
	\begin{eqnarray} \label{jbfejvij}
		r_{\sf M}(S\cap P_Z)-r_{\sf M}(S\cap Z)\le d^-_{A\cup A''}(Z)\le d^-_{A\cup A'}(Z),
	\end{eqnarray}
 so \eqref{1dknjkbduzvreachex} holds.
By  \eqref{jbfejvij} and since every arc of $A''\subseteq A'$ enters at most one set in $\mathcal{Z}$, we get that 
$$\sum_{Z\in\mathcal{Z}}(r_{\sf M}(S\cap P_Z)-r_{\sf M}(S\cap Z)-d^-_A(Z))\le\sum_{Z\in\mathcal{Z}}d^-_{A''}(Z)\le |A''|\le\ell',$$ so \eqref{1dknjkbduzvreach} holds.
\medskip

To prove the {\bf sufficiency}, let us suppose that \eqref{ellell}, \eqref{jzbdjzbzb}, \eqref{1dknjkbduzvreachex} and \eqref{1dknjkbduzvreach} hold. The proof relies on the polyhedral description,  obtained from   Theorem \ref{thmGY}(a), of the subgraphs that admit an ${\sf M}$-reachability-based packing of  arborescences in $D$.
First we  focus on the upper bound $\ell'.$ To this purpose we find a subgraph  admitting an ${\sf M}$-reachability-based packing of  arborescences in $D$ that contains the minimum number of arcs in $A'$. 
The lower bound $\ell$ is achieved through Theorem \ref{thmGY}(b). Let us hence consider the following dual linear programs where   {\boldmath$c$}$(a)=1$ if $a\in A'$  and $0$ if $a\in A$:
 
\begin{minipage}{8truecm}
	\begin{eqnarray*}
			x(\delta^-_{A^*}(Z))	&	\ge	&	\hat p(Z) \hskip .4truecmZ\in\hat{\mathcal{Z}}	\\
						     -x(a) 	&	\ge 	&	-1	\hskip .8truecm a\in A^*			\\
		(P)	\hskip 2truecm		  x	&	\ge 	&	 \mathbb{0}						\\
						     c^Tx	&	=	&	w(\min)
	\end{eqnarray*}
\end{minipage}
\begin{minipage}{8truecm}
	\begin{eqnarray*}
			-q(a)+\sum_{a\in\delta^-(Z)}y(Z)			&	\le	&	c(a)   \hskip .5truecm a\in A^*	\\
		(D)		\hskip 3truecm		 y, q 		&	\ge 	&	 \mathbb{0} 				\\
					\hat p^Ty-\mathbb{1}^Tq		&	=	&	z(\max)
	\end{eqnarray*}
\end{minipage}
\medskip

Note that since $S$ is a set of vertices, that is every element of $S$ has multiplicity one, $S\cap {P_Z}=S_{P_Z}$ and $S\cap {Z}=S_{Z}$ for every $Z\subseteq V\cup S.$
Thus, by \eqref{1dknjkbduzvreachex}, the vector $\mathbb{1}$ is a feasible solution of $(P)$ and, by $c\ge 0$, the vector $\mathbb{0}$ is a feasible solution of $(D).$
The complementary slackness theorem says that feasible solutions $\overline x$ and $\overline y\choose\overline q$ of $(P)$ and $(D)$ are optimal if and only if
\begin{eqnarray}
\overline x(a)>0 &&	\Longrightarrow \hskip .55truecm -\overline q(a)+\sum_{a\in\delta^-(Z)}\overline y(Z)	=c(a),\label{cscx}\\
\overline y(Z)>0 &&	\Longrightarrow \hskip 2.45truecm \overline x(\delta^-_{A^*}(Z))=\hat p(Z),\label{cscy}	\\
\overline q(a)>0 &&	\Longrightarrow \hskip 3.45truecm \overline x(a)=1.\label{cscq}
\end{eqnarray}

By  Theorems \ref{mainTDI}  and \ref{EG}, there exist integral optimal solutions {\boldmath$\overline x$} and {\boldmath$\overline y\choose\overline q$} of $(P)$ and $(D)$ that minimizes $\mathbb{1}^T{\overline y\choose\overline q}$ and then that maximizes $\sum\limits_{Z\in\hat{\mathcal{Z}}}|Z|^2\overline y(Z)$. Let {\boldmath$\mathcal{Z}$} $=\{Z\in\hat{\mathcal{Z}}:\overline{y}(Z)>0\}$.  

We want to prove that $c^T\overline x\le \ell'.$ To obtain it we need some properties of $\mathcal{Z}$.
\begin{Lemma}\label{intoptsolprop}
The following hold:
	\begin{enumerate}[(a)]
 \setlength\itemsep{-0cm}
		\item\label{a} if $Z\in\mathcal{Z},$ then there exists an arc $a$  entering $Z$ with $\overline q(a)=0,$
		\item\label{b} if $a\in A^*$ enters $Z\in\mathcal{Z}$ and $\overline{q}(a)=0,$ 
			             then $a\in A'$, $a$ enters no other member of $\mathcal{Z},$ and $\overline{y}(Z)=1,$
		\item\label{c} if $Z\in\mathcal{Z},$ then  $\overline{y}(Z)=1,$ 
		\item\label{d} if $a\in A^*$  enters $Z\in\mathcal{Z}$ and $\overline{q}(a)\neq 0,$ 
			      	     then $a\in A$ and $\overline{q}(a)=\sum\limits_{a\in\delta^-(Z')}\overline y(Z'),$
		\item\label{e} $\mathcal{Z}$ is an $A'$-disjoint core-laminar subset of 
				     $\hat{\mathcal{Z}}$,
		\item\label{f}  if $a\in A^*$  enters no $Z\in\mathcal{Z}$, then  $\overline{q}(a)=0,$
		\item\label{g} $\sum\limits_{a\in A^*}\overline q(a)=\sum\limits_{Z\in\mathcal{Z}}d_A^-(Z),$
		\item\label{h} $c^T\overline x\le \ell'.$
	\end{enumerate}
\end{Lemma}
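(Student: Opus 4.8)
The plan is to extract all the structural information about the optimal dual solution $\binom{\overline y}{\overline q}$ from complementary slackness \eqref{cscx}--\eqref{cscq} together with the lexicographic optimality of $\binom{\overline y}{\overline q}$ (first minimizing $\mathbb 1^T\binom{\overline y}{\overline q}$, then maximizing $\sum_{Z}|Z|^2\overline y(Z)$), and then to read off (h) from the identity $c^T\overline x=\hat p^T\overline y-\mathbb 1^T\overline q$ furnished by strong duality. Thus most of the work is in (a)--(g), and (h) will be a one-line computation combining (c), (e) and (g).

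First I would prove (a) by a local modification: if every arc entering $Z\in\mathcal Z$ had $\overline q>0$, I would decrease $\overline y(Z)$ by $1$ and $\overline q(a)$ by $1$ on every $a\in\delta^-_{A^*}(Z)$. This keeps every dual constraint satisfied (the left-hand side at each entering arc is unchanged, the others only drop), it changes the objective by $-\hat p(Z)+d^-_{A^*}(Z)\ge 0$ thanks to \eqref{1dknjkbduzvreachex}, and it strictly decreases $\mathbb 1^T\binom{\overline y}{\overline q}$, contradicting the choice of $\binom{\overline y}{\overline q}$ (the same move with $Z$ alone rules out $Z$ having no entering arc). For (b), if $a$ enters $Z\in\mathcal Z$ with $\overline q(a)=0$, the dual constraint at $a$ reads $\sum_{a\in\delta^-(W)}\overline y(W)\le c(a)$; since its left-hand side is at least $\overline y(Z)\ge 1$, this forces $c(a)=1$ (so $a\in A'$), $\overline y(Z)=1$, and that $a$ enters no other member of $\mathcal Z$. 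Part (c) is then immediate from (a) and (b). Part (f) is the dual mirror image: if $a$ enters no member of $\mathcal Z$, the constraint at $a$ has slack, so $\overline q(a)>0$ could be decreased, strictly increasing the objective and contradicting optimality; hence $\overline q(a)=0$.

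The heart of the argument is (d) and the core-laminarity in (e). Complementary slackness immediately gives, for $a$ entering $Z\in\mathcal Z$ with $\overline q(a)>0$, that $\overline x(a)=1$ and $\overline q(a)=\sum_{a\in\delta^-(W)}\overline y(W)-c(a)$; so the content of (d) is precisely that such an $a$ cannot lie in $A'$. If it did, tightness would force $\sum_{a\in\delta^-(W)}\overline y(W)=\overline q(a)+1\ge 2$, i.e. an $A'$-arc entering at least two members of $\mathcal Z$. To rule this out, and likewise to prove core-laminarity, I plan to uncross two members $Z,Z'$ of $\mathcal Z$: replacing $\{Z,Z'\}$ by $\{Z\cap Z',Z\cup Z'\}$ in $\overline y$ preserves dual feasibility (each arc enters at most as many of $Z\cap Z',Z\cup Z'$ as of $Z,Z'$), leaves $\mathbb 1^T\binom{\overline y}{\overline q}$ unchanged, and does not decrease the objective by the supermodularity of $\hat p$ on core-intersecting sets (Claim \ref{psupmod}(a)); since $|Z\cap Z'|^2+|Z\cup Z'|^2>|Z|^2+|Z'|^2$ for incomparable sets, this contradicts the second tie-break. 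The delicate point, and the step I expect to be the main obstacle, is that Claim \ref{psupmod}(a) supplies supermodularity only for \emph{core-intersecting} petals, whereas two members of $\mathcal Z$ sharing an $A'$-arc only share the head $v$ of that arc, and $v$ need not lie in either core. Reconciling this — by first showing that the members of $\mathcal Z$ involved in the sharing are in fact core-intersecting so the uncrossing applies, or by a dedicated exchange move — is where the real work lies.

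Once (a)--(f) are in place, the $A'$-disjointness in (e) follows formally: by (b) an $A'$-arc with $\overline q=0$ entering $\mathcal Z$ enters only one member, and by (d) every $A'$-arc entering $\mathcal Z$ has $\overline q=0$, so each $A'$-arc enters at most one member. For (g) I would split $\sum_a\overline q(a)$, using (f), over arcs entering $\mathcal Z$: by (b) the $A'$-arcs among them contribute $0$, while by (d) each contributing arc lies in $A$ with $\overline q(a)=\sum_{a\in\delta^-(W)}\overline y(W)=|\{W\in\mathcal Z:a\in\delta^-(W)\}|$ (using $\overline y\equiv 1$ on $\mathcal Z$ from (c)); summing over such arcs gives $\sum_a\overline q(a)=\sum_{Z\in\mathcal Z}d^-_A(Z)$. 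Finally (h) is pure bookkeeping: by strong duality together with (c) and (g),
\[
c^T\overline x=\hat p^T\overline y-\mathbb 1^T\overline q=\sum_{Z\in\mathcal Z}\hat p(Z)-\sum_{Z\in\mathcal Z}d^-_A(Z)=\sum_{Z\in\mathcal Z}\big(r_{\sf M}(S\cap P_Z)-r_{\sf M}(S\cap Z)-d^-_A(Z)\big),
\]
which is at most $\ell'$ by \eqref{1dknjkbduzvreach}, since $\mathcal Z$ is an $A'$-disjoint core-laminar subset of $\hat{\mathcal Z}$ by (e).
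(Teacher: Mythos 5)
Your treatment of parts (a), (b), (c), (f), (g), (h) and of the core-laminarity half of (e) matches the paper's proof in all essentials: the same local modifications of the dual solution, the same use of the complementary slackness conditions \eqref{cscx}--\eqref{cscq}, the same $\sum_{Z}|Z|^2\overline y(Z)$ tie-break for uncrossing incomparable core-intersecting pairs, and the same final accounting via strong duality. The problem is part (d), which you yourself flag as unresolved, and it is load-bearing: the $A'$-disjointness in (e), hence (g) and ultimately (h), all rest on it.

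Your plan for (d) is to note that an arc $a=s'v\in A'$ entering some $Z\in\mathcal Z$ with $\overline q(a)>0$ must, by tightness and part (c), enter at least two distinct members of $\mathcal Z$, and then to exclude this configuration by uncrossing. This does not work: uncrossing with the $|Z|^2$ tie-break only eliminates pairs $Z_1,Z_2\in\mathcal Z$ with $Z_1-Z_2\neq\emptyset\neq Z_2-Z_1$, and a single arc $s'v$ can perfectly well enter two \emph{nested} petals $Z_1\subsetneq Z_2$ (this only requires $v\in Z_1$ and $s'\notin Z_2$, which is consistent with nesting), so after uncrossing you are left with exactly the configuration you needed to rule out. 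Incidentally, the obstacle you do identify --- that the two members sharing the arc might fail to be core-intersecting --- is not actually an obstacle: every arc entering a petal $Z$ has its head in the core of $Z$ (otherwise it would enter $Z-C_Z$, contradicting $d_A^-(Z-C_Z)=0$), so two petals sharing an entering arc automatically have intersecting cores. The genuine difficulty lies elsewhere. The paper's proof of (d) uses a different local move: it absorbs the tail $s$ of the offending arc into the petal, setting $Z'=Z\cup\{s\}$ (still a petal), replaces $\chi^{\hat{\mathcal{Z}}}_{\{Z\}}$ by $\chi^{\hat{\mathcal{Z}}}_{\{Z'\}}$ in $\overline y$, and decreases $\overline q(a)$ by one. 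Verifying that the new pair remains optimal requires checking the complementary slackness condition $\overline x(\delta^-_{A^*}(Z'))=\hat p(Z')$, which follows from $\overline x(a)=1$, from $r_{\sf M}(S\cap Z')\le r_{\sf M}(S\cap Z)+1$, and from the resulting sandwich $\hat p(Z')\le\overline x(\delta^-_{A^*}(Z'))=\hat p(Z)-1\le\hat p(Z')$. This strictly decreases $\mathbb{1}^T{\overline y\choose\overline q}$ and yields the contradiction. Without this (or an equivalent) exchange move, your proof of (d) --- and with it (e), (g) and (h) --- is incomplete.
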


\begin{proof}
\eqref{a}  Suppose that there exists {\boldmath$Z$} $\in\mathcal{Z}$ such that  $\overline q(a)\neq 0$ for all $a\in\delta_{A^*}^-(Z).$ Let 
\medskip

\centerline{{\boldmath$\overline y'$} $=\overline y-\chi^{\hat{\mathcal{Z}}}_{\{Z\}}$ and {\boldmath$\overline q'$} $=\overline q-\chi^{A^*}_{\delta_{A^*}^-(Z)}.$} 
\medskip

Since $\overline y\in\mathbb{Z}_+^{\hat{\mathcal{Z}}}$ and $Z\in\mathcal{Z}$ (and hence $\overline y(Z)\ge 1$), we have $\overline y'\in\mathbb{Z}_+^{\hat{\mathcal{Z}}}$.  Since $\overline q\in\mathbb{Z}_+^{A^*}$  and $\overline q(a)\neq 0$ (and hence $\overline q(a)\ge 1$) for all $a\in\delta_{A^*}^-(Z)$, we have  $\overline q'\in\mathbb{Z}_+^{A^*}$.  Further, for all $a\in\delta_{A^*}^-(Z)$, we have
$$-\overline q'(a)+\sum_{a\in\delta^-(Z'')}\overline y'(Z'')=-(\overline q(a)-1)+(\sum_{a\in\delta^-(Z'')}\overline y(Z''))-1=-\overline q(a)+\sum_{a\in\delta^-(Z'')}\overline y(Z'').$$ 
Since $\overline x$ and $\overline y\choose\overline q$ are optimal solutions of $(P)$ and $(D),$ it follows that  $\overline y'\choose \overline q'$ is a feasible solution of $(D)$ and that $\overline x$ and ${\overline y'\choose\overline q'}$ satisfy the complementary slackness conditions, so $\overline y'\choose\overline q'$  is an optimal solution of $(D)$. However, $\mathbb{1}^T{\overline y'\choose\overline q'}<\mathbb{1}^T{\overline y\choose\overline q},$ that contradicts the choice of $\overline y\choose\overline q$.
\medskip

\eqref{b}  Let {\boldmath$a$} be an arc with $\overline{q}(a)=0$ that enters {\boldmath$Z$} $\in\mathcal{Z}$. Then, since $\overline y\choose \overline q$ is an integral feasible solution of $(D)$ and $c(a)\le 1,$ we have $$0=\overline{q}(a)\ge-c(a)+\sum_{a\in\delta^-(Z')}\overline y(Z')\ge -c(a)+\overline y(Z)\ge -1+1=0,$$ hence equality holds everywhere, thus $\overline y(Z)=1, c(a)=1$ and $\overline y(Z')=0$ for all $Z'\neq Z$ entered by $a$, so \eqref{b}  holds.
\medskip

\eqref{c}  It immediately follows from \eqref{a}  and \eqref{b}. 
\medskip

\eqref{d}  Let {\boldmath$a$} be an arc  with $\overline{q}(a)\neq 0$ that enters {\boldmath$Z$} $\in\mathcal{Z}.$ Then, since $\overline x$ and $\overline y\choose\overline q$ are optimal solutions of $(P)$ and $(D)$, we get,  by \eqref{cscq} and \eqref{cscx}, that $-\overline q(a)+\sum\limits_{a\in\delta^-(Z')}\overline y(Z')=c(a).$ To finish the proof we have to show that $a\in A$ (and hence $c(a)=0$). Suppose  for a contradiction that $a=$ {\boldmath$sv$} $\in A',$ with $s\in S$ and $v\in V$. Let {\boldmath$Z'$} $=Z\cup\{s\}.$ Since $Z\in\hat{\mathcal{Z}}$, we have $Z'\in\hat{\mathcal{Z}}$. Let
\medskip

\centerline{{\boldmath$\overline y'$} $=\overline y-\chi^{\hat{\mathcal{Z}}}_{\{Z\}}+\chi^{\hat{\mathcal{Z}}}_{\{Z'\}}$ and {\boldmath$\overline q'$} $=\overline q-\chi^{A^*}_{a}.$}
\medskip

Since $\overline y\in\mathbb{Z}_+^{\hat{\mathcal{Z}}}$ and $Z\in\mathcal{Z}$, we have $\overline y'\in\mathbb{Z}_+^{\hat{\mathcal{Z}}}$.  As $\overline q\in\mathbb{Z}_+^{A^*}$  and $\overline q(a)\neq 0$, we have  $\overline q'\in\mathbb{Z}_+^{A^*}$. Since no arc enters $s$, only $a$ leaves $s$, $\overline{q}(a)>0$ and \eqref{cscq}, we get 
\begin{eqnarray}\label{fefdvdvfbg}
 \overline x(\delta_{A^*}^-(Z'))=\overline x(\delta_{A^*}^-(Z))-\overline x(a)= \overline x(\delta_{A^*}^-(Z))-1.
 \end{eqnarray}
Since $Z'\in\hat{\mathcal{Z}}$, $\overline x$ is a feasible solution of $(P)$, by \eqref{fefdvdvfbg},  $Z\in\mathcal{Z}$, \eqref{cscy}, and $r_{\sf M}(S\cap Z')\le r_{\sf M}(S\cap Z)+1,$ we have 
$$\hat p(Z')\le \overline x(\delta_{A^*}^-(Z'))= \overline x(\delta_{A^*}^-(Z))-1=\hat p(Z)-1\le \hat p(Z').$$
It follows that equality holds everywhere, that is $\hat p(Z')=\overline x(\delta_{A^*}^-(Z')).$ 

For all  $b\in \delta_{A^*}^-(Z'),$ 
$$-\overline q'(b)+\sum_{b\in\delta^-(Z'')}\overline y'(Z'')=-\overline q(b)+(\sum_{b\in\delta^-(Z'')}\overline y(Z''))-1+1=-\overline q(b)+\sum_{b\in\delta^-(Z'')}\overline y(Z'').$$

For $a,$ 
$$-\overline q'(a)+\sum_{a\in\delta^-(Z'')}\overline y'(Z'')=-(\overline q(a)-1)+(\sum_{a\in\delta^-(Z'')}\overline y(Z''))-1=-\overline q(a)+\sum_{a\in\delta^-(Z'')}\overline y(Z'').$$ 

Since  $\overline x$ and $\overline y\choose\overline q$ are optimal solutions of $(P)$ and $(D)$, the above arguments show that $\overline y'\choose\overline q'$ is a feasible solution of $(D)$ and $\overline x$ and $\overline y'\choose \overline q'$ satisfy the complementary slackness conditions, so  $\overline y'\choose\overline q'$ is an optimal solution of $(D)$. However, $\mathbb{1}^T{\overline y'\choose\overline q'}<\mathbb{1}^T{\overline y\choose\overline q}$  contradicts the choice of $\overline y\choose\overline q$.
\medskip

\eqref{e}  It immediately follows from \eqref{b} and \eqref{d}  that $\mathcal{Z}$ is an $A'$-disjoint subset of $\hat{\mathcal{Z}}$. We prove by the usual uncrossing technique that $\mathcal{Z}$ is core-laminar. Suppose for a contradiction that there exist core-intersecting $Z_1$ and $Z_2$ in $\mathcal{Z}$ such that  $Z_1-Z_2\neq\emptyset$ and $Z_2-Z_1\neq\emptyset.$ Let {\boldmath$\overline y'$} $=\overline y-\chi^{\hat{\mathcal{Z}}}_{\{Z_1\}}-\chi^{\hat{\mathcal{Z}}}_{\{Z_2\}}+\chi^{\hat{\mathcal{Z}}}_{\{Z_1\cap Z_2\}}+\chi^{\hat{\mathcal{Z}}}_{\{Z_1\cup Z_2\}}.$ Since $\overline y\in\mathbb{Z}_+^{\hat{\mathcal{Z}}}$ and $Z_1,Z_2\in\mathcal{Z}$, we have $\overline y'\in\mathbb{Z}_+^{\hat{\mathcal{Z}}}$. Since $\overline y\choose\overline q$ is a feasible solution of  $(D)$ and $\sum\limits_{a\in\delta^-(Z)}\overline y'(Z)\le \sum\limits_{a\in\delta^-(Z)}\overline y(Z),$ $\overline y'\choose\overline q$ is also a feasible solution of  $(D).$ Then,  since $\overline y\choose\overline q$ is an optimal solution of  $(D),$ $Z_1$ and $Z_2$ are core-intersecting, and by Claim \ref{psupmod}(a), we have  $$0\le (\hat p^T\overline y-\mathbb{1}^T\overline q)-(\hat p^T\overline y'-\mathbb{1}^T\overline q)=\hat p(Z_1)+\hat p(Z_2)-\hat p(Z_1\cap Z_2)-\hat p(Z_1\cup Z_2)\le 0,$$ so $\overline y'\choose\overline q$ is an optimal solution of  $(D).$ Note that $\mathbb{1}^T{\overline y'\choose\overline q}=\mathbb{1}^T{\overline y\choose\overline q}.$ However, by $Z_1-Z_2\neq\emptyset\neq Z_2-Z_1$, we have  $$\sum_{Z\in\hat{\mathcal{Z}}}|Z|^2\overline y(Z)-\sum_{Z\in\hat{\mathcal{Z}}}|Z|^2\overline y'(Z)=|Z_1|^2+|Z_2|^2-|Z_1\cap Z_2|^2-|Z_1\cup Z_2|^2<0$$ that contradicts the choice of ${\overline y\choose\overline q}$. 
\medskip

\eqref{f}  Suppose that there exists an arc {\boldmath$a$} with $\overline{q}(a)\neq 0$ that enters no $Z\in\mathcal{Z}.$ Since $\overline q(a)\ge 0,$ we have $\overline q(a)>0.$ Then, by \eqref{cscq},  \eqref{cscx}, and  $c(a)\ge 0,$ we have  a contradiction: 
$$0=\sum_{a\in\delta^-(Z)}\overline y(Z)=c(a)+\overline q(a)>0+0.$$

\eqref{g}  By \eqref{f}, \eqref{d}, and \eqref{c},  we have $$\sum_{a\in A^*}\overline q(a)=\sum\limits_{\substack{a\in A^*\\\overline q(a)>0}}\overline q(a)=\sum\limits_{\substack{a\in A^*\\\overline q(a)>0}} \sum_{a\in\delta_A^-(Z)}\overline y(Z)=\sum\limits_{\substack{a\in A^*\\\overline q(a)>0}}\sum\limits_{\substack{a\in\delta^-_A(Z)\\Z\in\mathcal{Z}}}1=\sum_{Z\in\mathcal{Z}}\sum_{a\in\delta^-_A(Z)}1=\sum_{Z\in\mathcal{Z}}d_A^-(Z).$$

 \eqref{h}  Since $\overline x$ and $\overline y\choose\overline q$ are optimal solutions  of $(P)$ and $(D),$ we have, by strong duality, that $c^T\overline x=\hat p^T\overline y-\mathbb{1}^T\overline q$, which, by  \eqref{c} and \eqref{g}, is equal to $\sum_{Z\in\mathcal{Z}}(r_{\sf M}(S\cap P_Z)-r_{\sf M}(S\cap Z)-d^-_A(Z))$. By \eqref{e} and \eqref{1dknjkbduzvreach}, this sum is at most $\ell'.$
\end{proof}

\noindent Let {\boldmath$A_1$} $=\{a\in A^*: \overline x(a)=1\}.$ 
Note that, by Lemma \ref{intoptsolprop}\eqref{h}, we have
	\begin{eqnarray}\label{A1ell}
		|A'\cap A_1|=c^T\overline x\le \ell'.
	\end{eqnarray}
Note that $N_{A_1\cap A'}^-(v)$ is independent in ${\sf M}$ for all $v\in V.$ Indeed, let $D(A_1)=(V\cup S,A_1).$ Since $\overline x$ is a feasible solution of $(P)$, \eqref{GYcond2} holds for $(D(A_1),S,{\sf M}).$ Then, by Theorem \ref{thmGY}(b),  there exists an ${\sf M}$-reachability-based packing $\mathcal{B}_1$ of arborescences in $D(A_1)$. Hence $\chi^{A^*}_{A(\mathcal{B}_1)}$ is a feasible solution of $(P).$ Since $\overline x$ is an optimal solution of $(P)$, $|A_1\cap A'|=c^T\overline x\le c^T\chi^{A^*}_{A(\mathcal{B}_1)}=|A(\mathcal{B}_1)\cap A_1\cap A'|$ so $A_1\cap A'\subseteq A(\mathcal{B}_1).$ Then, for each  $v\in V$, the set $R_v$ of roots of the arborescences in $\mathcal{B}_1$ containing $v$ contains $N^-_{A_1\cap A'}(v)$. Since $\mathcal{B}_1$ is  an ${\sf M}$-reachability-based packing $\mathcal{B}_1$ of arborescences, $R_v$ is independent in ${\sf M}$ and thus   $N^-_{A_1\cap A'}(v)$  is independent in ${\sf M}$.

Let {\boldmath$A_2$} be obtained  by adding to  $A_1$ a smallest arc set in $A'$ such that $N_{A_2}^-(v)$ is independent in {\sf M} and $\sum_{v\in V}|N_{A_2}^-(v)|\ge\ell.$ By \eqref{jzbdjzbzb}, this arc set exists. Let   {\boldmath$D'$}  $=(V,A_1\cap A),$  {\boldmath$S'$} the multiset of $V$ such that $S'_v=N_{A_2}^-(v)$ for every $v\in V$ and {\boldmath${\sf M}'$} the restriction of {\sf M} on $S'.$ Let us check that the conditions of Theorem \ref{thmGY}(b) are satisfied for $(D',S',{\sf M}')$. First observe that
	\begin{eqnarray}
		S'_v	\in	 \mathcal{I}_{{\sf M}'} \hskip .4truecm\text{ for every } v\in V.\label{gzddz}
	\end{eqnarray}
For $Z'\in\hat{\mathcal{Z}}_{D'},$ let $Z=Z'\cup (S\cap N_{A_2}^-(Z')).$ 
By the definition of $Z$ and the assumption on $S,$ we have $d_{A_1\cap A}^-(Z')=d^-_{A_1}(Z)	=\overline x(\delta^-_{A^*}(Z)).$
Note that $Z'\in\hat{\mathcal{Z}}_{D'}$ implies that $Z\in\hat{\mathcal{Z}}_{D}.$
Since $\overline x$ is a feasible solution of $(P),$ $\overline x(\delta^-_{A^*}(Z))	\ge	r_{\sf M}(S\cap P^D_Z)-r_{\sf M}(S\cap Z).$ By the constructions of $D'$ and $Z$, we have $r_{\sf M}(S\cap P^D_Z)\ge r_{{\sf M}'}(S'_{P^{D'}_{Z'}})$ and $r_{\sf M}(S\cap Z)	=r_{{\sf M}'}(S'_{Z'}).$ Hence we have 
\begin{eqnarray}
	d_{A_1\cap A}^-(Z')	=\overline x(\delta^-_{A^*}(Z))	\ge	r_{\sf M}(S\cap P^D_Z)-r_{\sf M}(S\cap Z)	\ge	r_{{\sf M}'}(S'_{P^{D'}_{Z'}})-r_{{\sf M}'}(S'_{Z'})  \hskip .4truecm \forall Z'\in\hat{\mathcal{Z}}_{D'}.\label{iuvyttr}
\end{eqnarray}
By \eqref{gzddz} and \eqref{iuvyttr}, the conditions of Theorem \ref{thmGY}(b) are satisfied for $(D',S',{\sf M}')$,  and hence there exists a complete ${\sf M}'$-reachability-based packing $\mathcal{B}'$ of arborescences in $D'$. 
By adding to each $s'$-arborescence in $\mathcal{B}'$  the arc from the corresponding vertex of $S$ to $s'$ and adding to the packing each other vertex $s$ of $S$ with $r_{\sf M}(s)=1$ as an arborescence, we obtain a packing $\mathcal{B}$  of arborescences in $D$. 
Since $\mathcal{B}'$ is an ${\sf M}'$-reachability-based packing of arborescences in $D'$, 
\begin{eqnarray}\label{kjgjhchgdgf}
|R^{\mathcal{B}'}_v|=r_{{\sf M}'}(S'_{P_v^{D'}}) \text{ for every } v\in V.
\end{eqnarray}
Since $\mathcal{B}'$ is complete,  $\mathcal{B}$  uses all the arcs in $A_2\cap A'.$ Further, $|A_2\cap A'|$  is  equal to either $\ell$ and so, by \eqref{ellell}, is at most $\ell'$ or  $|A_1\cap A'|$ which is then  at least $\ell$ and, by \eqref{A1ell}, at most $\ell'.$

For every $v\in V,$ let $Z'_v=P_v^{D'}$ and $Z_v=Z'_v\cup (S\cap N_{A_2}^-(Z'_v))$. 
We show that $Z'_v\in\hat{\mathcal{Z}}_{D'}$. Indeed, let $C'_v$ be the strongly-connected component of $D'$ containing $v.$ Then $Z'_v\cap C'_v\neq\emptyset, Z'_v=P^{D'}_{C'}$ and $d^-_{A_1\cap A}(Z'_v-C'_v)=0,$ so 
$Z'_v\in\hat{\mathcal{Z}}_{D'}$. As above, this implies that $Z_v\in\hat{\mathcal{Z}}_{D}.$
Note also that $v\in Z_v$ and $S\cap Z_v=S\cap N_{A_2}^-(Z'_v).$
Then, by \eqref{iuvyttr} applied for $Z'_v$ and the monotonicity of $r_{{\sf M}}$, we have 
\begin{eqnarray*}
0=d_{A_1\cap A}^-(Z'_v)\ge r_{\sf M}(S\cap P^D_{Z_v})-r_{\sf M}(S\cap Z_v)\ge r_{\sf M}(S\cap P^D_{v})-r_{\sf M}(S\cap N_{A_2}^-(P_v^{D'}))\ge 0.
\end{eqnarray*}
Hence equality holds everywhere, so $r_{\sf M}(S\cap P^D_{v})=r_{\sf M}(S\cap N_{A_2}^-(P_v^{D'})).$ Further,  by the construction of $D'$, we have $r_{\sf M}(S\cap N_{A_2}^-(P_v^{D'}))=r_{{\sf M}'}(S'_{P_v^{D'}}).$ Hence, by the construction of $\mathcal{B}$ and \eqref{kjgjhchgdgf}, we get that $|R^{\mathcal{B}}_v|=|R^{\mathcal{B}'}_v|=r_{{\sf M}'}(S'_{P_v^{D'}})=r_{\sf M}(S\cap P^D_{v})$ for every $v\in V.$ Further, $|R^{\mathcal{B}}_s|=r_{\sf M}(s)=r_{\sf M}(S\cap P^D_{s})$ for every $s\in S,$ so the packing $\mathcal{B}$  is ${\sf M}$-reachability-based.  
\end{proof}

\subsection{Proof of Theorem \ref{bboboiboreach}}\label{pr12}

\begin{proof} 
To prove the {\bf necessity}, take an ${\sf M}$-reachability-based packing $\mathcal{B}$ of $s$-arborescences  $(s\in S^*)$  in $D$ for some {\boldmath$S^*$} $\subseteq S$ with $\ell\le |S^*|\le\ell'$. Then, clearly, \eqref{ellell} holds.
Also holds \eqref{jbuoouou}  because, since $r_{\sf M}$ is non-decreasing and $S^*_v$ is independent in ${\sf M},$ we have  
 \begin{eqnarray*}
\sum_{v\in V}r_{\sf M}(S_v)\ge \sum_{v\in V}r_{{\sf M}}(S^*_v)=\sum_{v\in V}|S^*_v|=|S^*|\ge\ell.
 \end{eqnarray*}

By the necessity of Theorem \ref{thmGY}, \eqref{GYcond} also holds.
Let  {\boldmath${\sf X}$} $\in {\mathcal X}$ and $v\in X_I.$ Let $R^v=R^{\mathcal{B}}_v.$
\begin{eqnarray}
	|R^v|	=|R^v_{X_I}|+|R^v_{X_W}|+|R^v_{\overline {X_O}}|.	\label{eryyjukure}
 \end{eqnarray}
Since the packing is ${\sf M}$-reachability-based, $R^v$ is a base of $S_{P_v}.$ Since $v\in X_I\subseteq C_{\sf X}$ and $C_{\sf X}$ is strongly-connected, we have $P_v\subseteq P_{X_I}\subseteq P_{C_{\sf X}}\subseteq P_v$. Then 
\begin{eqnarray}
	|R^v|	=r_{\sf M}(S_{P_{v}})=r_{\sf M}(S_{P_{X_I}}).	\label{nkmnjbhvre}
 \end{eqnarray}
Since $R^v_{X_W}\subseteq R^v$, $R^v$ is independent in {\sf M} and $r_{\sf M}$ is monotone, we have 
\begin{eqnarray}
	|R^v_{X_W}|=r_{\sf M}(R^v_{X_W})\le r_{\sf M}(S_{X_W}).	\label{lknvytxtrre}
 \end{eqnarray}
Since the arborescences are arc-disjoint and $v\in X_O$, we have 
\begin{eqnarray}
	|R^v_{\overline {X_O}}|\le d_A^-(X_O).	\label{lknouhvyftxere}
 \end{eqnarray}
It follows from \eqref{eryyjukure}--\eqref{lknouhvyftxere} that 
\begin{eqnarray}
	|S^*_{X_I}|\ge |R^v_{X_I}|\ge r_{\sf M}(S_{P_{X_I}})-r_{\sf M}(S_{X_W})-d_A^-(X_O).	\label{kjbouyxreytre}
 \end{eqnarray}
 Let ${\cal P}$ be an OW laminar biset family of $\mathcal{X}.$ Then,  by $\ell'\ge |S^*|$, since $X_I$'s are disjoint,  and  by \eqref{kjbouyxreytre}, we have  
 \begin{eqnarray*}
\ell'\ge |S^*|\ge\sum_{{\sf X}\in{\cal P}}|S^*_{X_I}|\ge \sum_{{\sf X}\in{\cal P}}(r_{\sf M}(S_{P_{{X}_I}})-r_{\sf M}(S_{X_W})-d_A^-(X_O)),
 \end{eqnarray*}
  so \eqref{dknjkbduzvreach} holds. 
\medskip

To prove the {\bf sufficiency}, suppose that for an instance $(D=(V,A), S, \ell, \ell', {\sf M}=(S,r_{\sf M}))$  of Theorem \ref{bboboiboreach}, the conditions \eqref{ellell}, \eqref{jbuoouou}, \eqref{GYcond} and \eqref{dknjkbduzvreach} hold. 
To be able to apply Theorem \ref{bboboiboreach1} we have to consider the extended version.
Let {\boldmath$D'$} $=(V\cup S',A\cup A')$ hence be obtained from $D$ by adding a new vertex set {\boldmath$S'$} containing one vertex $s'$ for every $s\in S$ and adding a new arc set {\boldmath$A'$} containing one arc $s's$ for every $s\in S$. Let {\boldmath${\sf M}'$} be a copy of {\sf M} on $S'.$ Note that $A'$ is the set of arcs leaving $S'$, no arc enters $s'$ and exactly one arc leaves $s'$ for every vertex $s'$ of $S'.$  Then $(D', \ell, \ell', {\sf M}')$ is an instance of Theorem  \ref{bboboiboreach1}. We now show that all the conditions of Theorem \ref{bboboiboreach1} hold. First, \eqref{ellell} holds by assumption. Note that \eqref{jbuoouou} implies \eqref{jzbdjzbzb} and \eqref{GYcond} implies \eqref{1dknjkbduzvreachex}. Finally, the following lemma  implies that \eqref{1dknjkbduzvreach} also holds.

\begin{Lemma}\label{condholds}
$\sum_{Z\in\mathcal{Z}}(r_{{\sf M}'}(S'\cap P^{D'}_Z)-r_{{\sf M}'}(S'\cap Z)-d_A^-(Z))\le\ell' $  for all $A'$-disjoint core-laminar subset $\mathcal{Z}$ of $\hat{\mathcal{Z}}_{D'}.$
\end{Lemma}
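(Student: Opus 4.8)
The plan is to carry the given family $\mathcal{Z}$ in the extended digraph $D'$ over to an OW laminar biset family $\mathcal{P}$ of generalized petal bisets of $D$, and then invoke the hypothesis \eqref{dknjkbduzvreach} of Theorem \ref{bboboiboreach}. Write $\mathrm{term}(Z)=r_{{\sf M}'}(S'\cap P^{D'}_Z)-r_{{\sf M}'}(S'\cap Z)-d_A^-(Z)$ for the summand. First I would reduce to the case where $\mathrm{term}(Z)>0$ for every $Z\in\mathcal{Z}$: deleting the members with $\mathrm{term}(Z)\le 0$ can only increase the left-hand side and preserves both $A'$-disjointness and core-laminarity. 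For such a $Z$, condition \eqref{1dknjkbduzvreachex} (which holds because \eqref{GYcond} holds) gives $r_{{\sf M}'}(S'\cap P^{D'}_Z)-r_{{\sf M}'}(S'\cap Z)\le d^-_{A\cup A'}(Z)$, so $\mathrm{term}(Z)\le d^-_{A'}(Z)$; hence $d^-_{A'}(Z)\ge 1$, i.e. at least one arc of $A'$ enters $Z$.

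Next I would build the biset. Let $C$ be the core atom of $Z$. Since no arc enters $S'$ and the arcs of $A'$ leave $S'$, one checks that $C$ is an atom of $D$ contained in $V$, that $X_O:=Z\cap V$ is a petal of $D$ with the same core atom $C$, and that $d_A^-(X_O)=d_A^-(Z)$. Put $X_I:=\{\,\mathrm{head}(a):a\in\delta^-_{A'}(Z)\,\}$. The key local observation is that every arc of $A'$ entering $Z$ has its head in the core $Z\cap C$: otherwise it would enter $Z-C$, contradicting $d^-_{A\cup A'}(Z-C)=0$. Thus $\emptyset\neq X_I\subseteq Z\cap C$ is contained in the core of $X_O$, so ${\sf X}_Z:=(X_O,X_I)\in\mathcal{X}$. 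Using the rank-preserving identification of ${\sf M}'$ on $S'$ with ${\sf M}$ on $S$ along the arcs of $A'$, and $P_{X_I}=P_C=P^{D'}_Z\cap V$, I get $r_{{\sf M}'}(S'\cap P^{D'}_Z)=r_{\sf M}(S_{P_{X_I}})$. Finally, if $s'\in S'$ has its head in $X_W=X_O-X_I$, then $s'\in Z$ (if the head lies in $Z-C$ this follows from $d^-_{A'}(Z-C)=0$; if it lies in the core but outside $X_I$ it follows from the definition of $X_I$), so the elements of $S'$ lying over $X_W$ all belong to $S'\cap Z$, whence $r_{\sf M}(S_{X_W})\le r_{{\sf M}'}(S'\cap Z)$. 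Combining these three comparisons yields $\mathrm{term}(Z)\le r_{\sf M}(S_{P_{X_I}})-r_{\sf M}(S_{X_W})-d_A^-(X_O)$.

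It then remains to show that $\mathcal{P}:=\{{\sf X}_Z:Z\in\mathcal{Z}\}$ is OW laminar and that $Z\mapsto{\sf X}_Z$ is injective, so the two sums match. For OW-laminarity, suppose ${\sf X}_{Z^1},{\sf X}_{Z^2}$ are core-intersecting; then $Z^1,Z^2$ share the core atom $C$ and have intersecting cores, so by core-laminarity say $Z^1\subseteq Z^2$. If some vertex lay in $X^1_O\cap X^2_I$, the arc of $A'$ witnessing it as a head entering $Z^2$ would have tail outside $Z^2\supseteq Z^1$ and head inside $Z^1$, hence would enter both $Z^1$ and $Z^2$, contradicting $A'$-disjointness; therefore $X^1_O\subseteq X^2_O-X^2_I=X^2_W$. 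The injectivity is the delicate point, and is exactly where the multiplicities of $S$ must be handled: if $Z^1\subsetneq Z^2$ mapped to the same biset, choose $s'\in Z^2-Z^1\subseteq S'$ with head $t$. If $t\in Z^1$, then $t\in X^1_I=X^2_I$ forces an arc of $A'$ with head $t$ entering $Z^2$, and that arc also enters $Z^1$, a contradiction; if $t\notin Z^2$, then every arc of $A'$ entering $Z^1$ has tail $\neq s'$, hence tail outside $Z^2$ and head inside $Z^1\subseteq Z^2$, so it enters both $Z^1$ and $Z^2$, again contradicting $A'$-disjointness (such an arc exists since $d^-_{A'}(Z^1)\ge 1$). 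Hence the map is injective, and summing the comparison over $Z\in\mathcal{Z}$ and applying \eqref{dknjkbduzvreach} to the OW laminar family $\mathcal{P}$ gives the bound $\le\ell'$. I expect the main obstacle to be precisely this last bookkeeping: the inner sets have to be defined from the arcs of $A'$ entering each petal rather than from the whole core, so that $A'$-disjointness can simultaneously force OW-laminarity and prevent two distinct petals with the same trace on $V$ from collapsing onto the same biset.
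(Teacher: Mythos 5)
Your construction is essentially the paper's own proof: the same reduction to members with positive summand (forcing $d^-_{A'}(Z)\ge 1$ via \eqref{1dknjkbduzvreachex}), the same biset family with $X_O=Z\cap V$ and $X_I$ the set of heads of $A'$-arcs entering $Z$, the same three comparisons matching \eqref{jbfheknfibf}--\eqref{nefbefnp}, the same $A'$-disjointness argument for OW-laminarity, and the final appeal to \eqref{dknjkbduzvreach}. The injectivity of $Z\mapsto{\sf X}_Z$, which the paper leaves implicit, is indeed needed for the two sums to compare termwise, so it is good that you address it; however, in your case $t\notin Z^2$ the inference ``tail $\neq s'$, hence tail outside $Z^2$'' tacitly assumes $Z^2-Z^1=\{s'\}$, whereas $Z^2-Z^1$ may contain several elements of $S'$. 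The fix is a one-liner that supersedes your whole case analysis: if $Z^1\subsetneq Z^2$ with $Z^1\cap V=Z^2\cap V$, then any arc of $A'$ entering $Z^2$ (one exists since $d^-_{A'}(Z^2)\ge 1$) has its head in $Z^2\cap V=Z^1\cap V\subseteq Z^1$ and its tail outside $Z^2\supseteq Z^1$, so it enters both sets, contradicting $A'$-disjointness.
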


\begin{proof}
 Let  {\boldmath$\mathcal{Z}$} be an $A'$-disjoint core-laminar subset $\mathcal{Z}$ of $\hat{\mathcal{Z}}_{D'}.$ 
 For every $Z^i\in\mathcal{Z}$, we may suppose without loss of generality that $r_{{\sf M}'}(S'\cap P^{D'}_{Z^i})-r_{{\sf M}'}(S'\cap Z^i)-d_A^-(Z^i)\ge 1$ and hence, by \eqref{1dknjkbduzvreachex}, that $d_{A'}^-(Z^i)\ge 1$. Let {\boldmath${\sf X}^i$} be the biset on $V$ with {\boldmath$X^i_O$} $=V\cap Z^i$ and {\boldmath$X^i_I$} $=N^+_{A'}(S'-Z^i)\cap Z^i$ and {\boldmath$\mathcal{P}$} $=\{{\sf X}^i:Z^i\in\mathcal{Z}\}.$ 
 
\begin{Proposition}\label{disjointbiset}
$\mathcal{P}$ is an OW laminar biset family of $\mathcal{X}.$
\end{Proposition}

\begin{proof}
To show that $\mathcal{P}\subseteq {\mathcal{X}}$, let ${\sf X}^i\in\mathcal{P}.$ Since $Z^i\in\mathcal{Z}\subseteq \hat{\mathcal{Z}}_{D'},$ there exists an atom $C$ of $D'$ such that $Z^i\cap C\neq\emptyset, Z^i\subseteq P_C^{D'}$ and $d_{A\cup A'}^-(Z^i-C)=0.$ Then, since $d_{A'}^-(Z^i)\ge 1$, $C$ is an atom of $D,$ $\emptyset\neq X^i_I\subseteq C$, $X^i_W\subseteq P^D_C$ and $d_A^-(X_W-C)=0,$ so we have ${\sf X}^i\in {\mathcal{X}}.$

We now  show that  $\mathcal{P}$ is OW laminar. Suppose there exist two core-intersecting bisets ${\sf X}^i, {\sf X}^j$ in $\mathcal{P}$ such that $X^i_O-X^j_W\neq\emptyset\neq X^j_O-X^i_W.$ Then $Z^i$ and $Z^j$ are $\hat{\mathcal{Z}}_{D'}$-intersecting. Since $\mathcal{Z}$ is $D'$-core-laminar, we get that $Z^i\subseteq Z^j$ or $Z^j\subseteq Z^i,$ say $Z^i\subseteq Z^j$, so $X^i_O\subseteq X^j_O.$ Since $X^i_O-X^j_W\neq\emptyset,$ it follows that there exists a vertex $v\in X^i_O\cap X^j_I.$ Then,  by definition, there exist $s'_{j}\in S'\setminus Z^j$ such that $s'_{j}v\in A'.$ Since $v\in X^i_O\cap X^j_I\subseteq Z^i\subseteq Z^j$ and $s'_{j}\in S'\setminus Z^j\subseteq S'\setminus Z^i$, we get that the arc $s'_{j}v\in A'$ enters $Z^i$ and $Z^j$, that contradicts the fact that $\mathcal{Z}$ is $A'$-disjoint. 
\end{proof}

Note that, by construction of $D',$ we have $S'\cap P^{D'}_{Z^i}=S_{P^D_{X_O}}.$ Since there exists $v\in X_I\subseteq C_{\sf X}$ and $C_{\sf X}$ is strongly-connected, we have $P^D_v\subseteq P^D_{X_I}\subseteq P^D_{X_O}\subseteq P^D_{C_{\sf X}}\subseteq P^D_v$. Then 
\begin{eqnarray} \label{jbfheknfibf}
r_{{\sf M}'}(S'\cap P^{D'}_{Z^i})=r_{{\sf M}}(S_{P^{D}_{X^i_I}}).
\end{eqnarray}
  For every $s\in S_{X^i_W},$ by the definition of $X^i_W,$ we have $s'\in S'\cap Z^i.$ Hence the elements of $S'$ corresponding to $S_{X^i_W}$ are contained in $S'\cap Z^i.$ Then, by the monotonicity of $r_{\sf M},$ we have 
 \begin{eqnarray} \label{lkblljbvuu}
  r_{{\sf M}}(S_{X^i_W})\le r_{{\sf M}'}(S'\cap Z^i).
 \end{eqnarray}
 Since no arc enters $Z^i\cap S'$, we have 
 \begin{eqnarray} \label{nefbefnp}
d_A^-(Z^i)=d_A^-(X^i_O).
\end{eqnarray}
  Thus, by \eqref{jbfheknfibf}--\eqref{nefbefnp}, Proposition \ref{disjointbiset}  and \eqref{dknjkbduzvreach},  we get that $$\sum_{Z^i\in\mathcal{Z}}(r_{{\sf M}'}(S'\cap P^{D'}_{Z^i})-r_{{\sf M}'}(S'\cap Z^i)-d_A^-(Z^i))\le \sum_{{\sf X}^i\in\mathcal{P}}(r_{{\sf M}}(S_{P^{D}_{X^i_I}})-r_{{\sf M}}(S_{X^i_W})-d_A^-(X^i_O))\le\ell',$$ and the proof of the lemma is completed.
\end{proof}

By Theorem \ref{bboboiboreach1},  there exists an ${\sf M}'$-reachability-based packing of  arborescences in $D'$ using at least $\ell$ and  at most $\ell'$ arcs of $A'.$ By deleting the roots $s'$ of the arborescences in the packing, we obtain $s$-arborescences in $D.$ Hence we get an ${\sf M}$-reachability-based packing of at least $\ell$ and at most $\ell'$ arborescences in $D$ that completes the proof of Theorem \ref{bboboiboreach}.
\end{proof}

\section{Algorithmic aspects}

In this section we assume that a matroid is given by an oracle for the rank function and, under this assumption, we point out that all the problems which derive from the problem of Theorem \ref{bboboiboreach1} can be solved in polynomial time. Since all the reductions we presented are done in polynomial time, it is enough to show that the  problem of Theorem \ref{bboboiboreach1} can be solved in polynomial time. The main algorithmic difficulty in the proof of Theorem \ref{bboboiboreach1} is to find an arc set containing a minimum number of arcs leaving $S$, that admits a matroid-reachability-based packing of arborescences.  This can be done by finding the arc set  of a matroid-reachability-based packing of arborescences of minimum weight where the weight is  $1$ for every arc leaving $S$ and $0$ for the other arcs. This latter problem can be solved in polynomial time due to {B\'erczi,  Kir\'aly, Kobayashi \cite{BKK} or Kir\'aly, Szigeti, and Tanigawa \cite{KSzT}. Then, we mention that the last part in the proof of Theorem \ref{bboboiboreach1}, that is finding a complete matroid-reachability-based packing of arborescences, is polynomial (see Kir\'aly \cite{cskir}, H\"orsch and Szigeti \cite{HSz4}) hence we can get the required packing in polynomial time.

\appendix

\section{Detailed results on packings in directed hypergraphs}

The aim of this section is to present the extensions of the previous results to directed hypergraphs. We start with the necessary definitions on directed hypergraphs.
\medskip

 Let {\boldmath$\mathcal{D}$} $=(V,\mathcal{A})$ be a directed hypergraph, shortly {\it dypergraph}, where {\boldmath$\mathcal{A}$} is the set of dyperedges of $\mathcal{D}.$ A  {\it dyperedge} $e$ is an ordered pair $(Z,z)$, where $z\in V$ is the  {\it head} of $e$ and $\emptyset\neq Z\subseteq V-z$ is the set of {\it tails}  of $e.$ For a  subset $X$ of $V,$ a dyperedge $(Z,z)$ {\it enters $X$} if $z\in X$ and $Z-X\neq\emptyset.$ The {\it in-degree} {\boldmath$d^-_\mathcal{A}(X)$} of $X$ is the number  of dyperedges in $\mathcal{A}$ {\it entering $X$}. For a subpartition ${\cal P}$ of $V$, we denote by {\boldmath$e_{\mathcal{A}}({\cal P})$} the set of dyperedges in $\mathcal{A}$ that enters at least one member of ${\cal P}$.
 By {\it trimming} a dyperedge $e=(Z,z)$, we mean the operation that replaces $e$ by an arc $yz$ where $y\in Z.$  A dypergraph $\mathcal{D}$ is called a {\it (spanning) $s$-hyperarborescence} if $\mathcal{D}$ can be trimmed to a (spanning) $s$-arborescence. A dypergraph $\mathcal{D}$ is called a {\it dyperpath from $s$ to $t$} if $\mathcal{D}$ can be trimmed to a path from $s$ to $t.$ 
 A {\it subatom} of $\mathcal{D}$ is a non-empty subset $C$ of vertices such that for every ordered pair $(u,v)\in C\times C,$ there exists a dyperpath from $u$ to $v$ in $\mathcal{D}$. 
An {\it atom} of $\mathcal{D}$ is a maximal subatom of $\mathcal{D}$. 
 For a subset  $X$ of $V,$ we denote by {\boldmath$P^\mathcal{D}_X$}  the set of vertices from which there exists a dyperpath to  at least one vertex of $X$.
 \medskip

 Let $S$ be a multiset of $V$ and ${\sf M}$ a matroid on $S.$ A packing $\mathcal{B}$ of hyperarborescences in $\mathcal{D}$  is called {\sf M}-{\it based} or {\it  matroid-based} if every $s\in S$ is the root of at most one hyperarborescence in the packing and for every vertex $v\in V$, the multiset {\boldmath$R^{\mathcal{B}}_v$} of roots of hyperarborescences in the packing in which $v$ can be reached from the root forms a basis of ${\sf M}.$ A packing $\mathcal{B}$ of hyperarborescences in $\mathcal{D}$  is called {\sf M}-{\it reachability-based} or {\it matroid-reachability-based}  if every $s\in S$ is the root of at most one hyperarborescence in the packing and for every vertex $v\in V$, the multiset $R^{\mathcal{B}}_v$  forms a basis of $S_{P^\mathcal{D}_v}$ in ${\sf M}.$ A packing of hyperarborescences is {\it complete} if every $s\in S$ is the root of exactly one hyperarborescence in the packing.

\subsection{Packing of  hyperarborescences}

Theorem \ref{edmondsarborescencesmulti} was generalized to dypergraphs in \cite{fkiki}.

\begin{thm}[Frank, Kir\'aly, Kir\'aly  \cite{fkiki}]\label{hyperarborescencesmulti} 
Let $\mathcal{D}=(V,\mathcal{A})$ be a dypergraph and $S$ a multiset of vertices in $V.$
There exists a packing of spanning $s$-hyperarborescences  $(s\in S)$ in $\mathcal{D}$ if and only if 
	\begin{eqnarray} \label{fkkcondmulti} 
		|S_{X}|+ d^-_\mathcal{A}(X) &\geq &|S| \hskip .5truecm \text{ for every  $\emptyset\neq X\subseteq V.$}
	\end{eqnarray}
\end{thm}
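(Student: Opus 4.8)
The plan is to reduce the statement to Edmonds' Theorem \ref{edmondsarborescencesmulti} by the gadget of \cite{FKLSzT}, so that the only real work is translating the cut condition \eqref{fkkcondmulti} into \eqref{edmondscondmulti}. The necessity I would dispatch directly, as the paper suggests: given dyperedge-disjoint spanning $s$-hyperarborescences $(s\in S)$ and a non-empty $X\subseteq V$, each of the $|S|-|S_X|$ hyperarborescences rooted outside $X$ must contain a dyperedge entering $X$ (along its trimmed spanning path from the root to a vertex of $X$ some arc crosses into $X$, and that arc comes from a dyperedge entering $X$), and these dyperedges are distinct by disjointness; hence $d^-_{\mathcal A}(X)\ge |S|-|S_X|$, which is \eqref{fkkcondmulti}.

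For sufficiency, I would build a digraph $D'=(V',A')$ with $V'=V\cup\{u_e:e\in\mathcal A\}$, replacing each dyperedge $e=(Z,z)$ by the tail-arcs $\{yu_e:y\in Z\}$ together with a single head-arc $u_ez$, and adding $|S|$ parallel back-arcs $zu_e$; the root multiset $S\subseteq V$ is kept unchanged. The vertex $u_e$ forces a trimming: since $u_e$ has a unique out-arc $u_ez$, in any arc-disjoint packing the dyperedge $e$ is used by at most one arborescence, and the tail $y$ with $yu_e$ in that arborescence is the chosen trimming. The back-arcs let every spanning arborescence reach $u_e$ once it has reached $z$, which is needed precisely because arborescences in $D'$ must span all of $V'$ whereas the hyperarborescences need only span $V$.

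The key step is to check that $S$ and $D'$ satisfy \eqref{edmondscondmulti}. I would do this by a per-dyperedge cut computation: for a fixed $X=X'\cap V$ the quantity $d^-_{A'}(X')$ splits into a sum over $e$ of a local contribution depending only on whether $u_e\in X'$, equal to $|Z-X|+|S|\cdot[z\notin X]$ if $u_e\in X'$ and to $[z\in X]$ otherwise; minimising over the placement of each $u_e$ gives contribution $1$ exactly when $e$ enters $X$ and $0$ otherwise. Thus $\min_{X'\cap V=X}\big(|S_{X'}|+d^-_{A'}(X')\big)=|S_X|+d^-_{\mathcal A}(X)$ for every non-empty $X$, so \eqref{fkkcondmulti} gives the bound $\ge|S|$ for all such $X'$. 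For the remaining sets with $X'\cap V=\emptyset$, every $u_e\in X'$ contributes its $|S|$ back-arcs (none internal, since their tails lie in $V$), so $d^-_{A'}(X')\ge|S|$ outright, which is exactly why the back-arcs were added. Hence \eqref{edmondscondmulti} holds and Theorem \ref{edmondsarborescencesmulti} yields a packing of spanning $s$-arborescences $(s\in S)$ in $D'$.

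Finally I would translate back. For each $u_e$ the unique arborescence containing $u_ez$ determines that $e$ is used, and the tail-arc $yu_e$ present in that arborescence determines the trimming $yz$; no arborescence can contain both a back-arc $zu_e$ and $u_ez$ without creating a cycle, so the trimming is well defined. A short path argument (paths in $D'$ alternate between $V$ and the $u_e$, and every step into a $u_e$ along a spanning path is a tail-arc rather than a back-arc, again by acyclicity) shows that projecting out the $u_e$ turns each spanning $s$-arborescence of $D'$ into a spanning $s$-hyperarborescence of $\mathcal D$, dyperedge-disjoint across the packing. The main obstacle, and the only delicate point, is exactly this auxiliary-vertex bookkeeping: the $u_e$ must be spanned in $D'$ yet must vanish in $\mathcal D$, and one must verify both that the parallel back-arcs repair spanning without disturbing the cut equivalence and that they never masquerade as genuine dyperedge uses.
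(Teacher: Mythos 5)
Your proof is correct, and it follows essentially the route the paper itself indicates: the paper gives no proof of this cited theorem but states that the hypergraph results follow from their graphic counterparts via the gadget of \cite{FKLSzT}, which is exactly the auxiliary-vertex construction you carry out (one vertex $u_e$ per dyperedge with tail-arcs in, one head-arc out, and parallel back-arcs to repair spanning). Your cut computation and the acyclicity argument ruling out a back-arc $zu_e$ together with $u_ez$ in the same arborescence are both sound, so the reduction to Theorem \ref{edmondsarborescencesmulti} goes through.
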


If $\mathcal{D}$ is a digraph, then Theorem \ref{hyperarborescencesmulti} reduces to Theorem \ref{edmondsarborescencesmulti}.
\medskip

The following common extension of Theorems \ref{reach1}  and \ref{hyperarborescencesmulti} was given in \cite{BF2}.

\begin{thm}[B\'erczi, Frank \cite{BF2}]\label{reach1hyp} 
Let $\mathcal{D}=(V,\mathcal{A})$ be a dypergraph and $S$ a multiset of vertices in $V.$
There exists a packing of reachability $s$-hyperarborescences $(s\in S)$ in $\mathcal{D}$ if and only if 
	\begin{eqnarray} \label{reach1condhyp} 
		|S_{X}|+ d^-_{\mathcal{A}}(X) &\geq& |S_{P^{\mathcal{D}}_X}| \hskip .5truecm \text{ for every  $X\subseteq V.$}
	\end{eqnarray}
\end{thm}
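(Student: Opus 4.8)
The plan is to obtain \Cref{reach1hyp} from its digraph predecessor \Cref{reach1} through the gadget of \cite{FKLSzT}, which is the uniform device the paper announces for all of its hypergraphic extensions. Concretely, from $\mathcal{D}=(V,\mathcal{A})$ I would build the digraph $D=(V\cup U,A)$ with $U=\{u_e:e\in\mathcal{A}\}$, replacing each dyperedge $e=(Z,z)$ by the arcs $yu_e$ for all $y\in Z$ together with the single arc $u_ez$. The multiset $S$ stays on $V$. Because each $u_e$ has out-degree one, every arc-disjoint packing of arborescences in $D$ uses the arc $u_ez$, and hence the dyperedge $e$, at most once; trimming $e$ to $yz$ whenever an arborescence reaches $u_e$ through $yu_e$ converts a packing of reachability $s$-arborescences in $D$ into a packing of reachability $s$-hyperarborescences in $\mathcal{D}$, and the reverse untrimming works as well. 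The correspondence is faithful on reachability because directed paths in $D$ and dyperpaths in $\mathcal{D}$ are in exact correspondence, so the set of $V$-vertices reachable from a given root is the same on both sides.

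The necessity in \Cref{reach1hyp} I would argue directly, as the paper does for all such results: given a packing and a set $X\subseteq V$, each root $s\in S_{P^{\mathcal{D}}_X}$ can reach $X$, so its hyperarborescence meets $X$; those rooted inside $X$ number at most $|S_X|$, while each of the others enters $X$ through a distinct dyperedge (distinct by dyperedge-disjointness), numbering at most $d^-_{\mathcal{A}}(X)$, which gives \eqref{reach1condhyp}.

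For the sufficiency I would assume \eqref{reach1condhyp} and verify the Kamiyama--Katoh--Takizawa condition \eqref{reach1cond} for $D$, after which \Cref{reach1} applied to $(D,S)$ produces a packing of reachability arborescences to be trimmed. For a test set $X'\subseteq V\cup U$, writing $X=X'\cap V$ and $W=\{e:u_e\in X'\}$, one has $|S_{X'}|=|S_X|$, and since in-reachability distributes over unions the clean identity $P^D_{X'}\cap V=P^{\mathcal{D}}_Y$ holds with $Y=X\cup\bigcup_{e\in W}Z_e$, so that $|S_{P^D_{X'}}|=|S_{P^{\mathcal{D}}_Y}|$. A direct count also gives $d^-_A(X')=\sum_{e\notin W,\,z_e\in X}1+\sum_{e\in W}|Z_e\setminus X|$. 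The task is then to bound this quantity from below using \eqref{reach1condhyp} evaluated at a suitable hypergraph set.

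I expect this last translation to be the main obstacle, and in particular the handling of test sets $X'$ that contain auxiliary vertices $u_e$ whose head $z_e$ lies outside $X$. The key structural facts I would exploit are that every tail of $e$ reaches its head $z_e$ in $\mathcal{D}$ (trim $e$ to $yz_e$), so including $u_e$ can only enlarge reachability along chains that the hypergraph inequality already controls at sets built from the heads; the example of a single dyperedge into a vertex $z$ reachable from many roots shows that a failure of \eqref{reach1cond} at a gadget set $\{u_e\}$ is matched not by the tail set $Z_e$ but by a head-based set, so the correspondence of test sets is \emph{not} simply $X'\mapsto X'\cap V$. I would therefore prove, by an uncrossing and accounting argument on $W$ in the spirit of the \cite{FKLSzT} reduction, that a deficiency-maximal $X'$ may be normalized so that $d^-_A(X')$ and $P^D_{X'}\cap V$ coincide with the in-degree and in-reachability of a single hypergraph set, at which point \eqref{reach1cond} for $D$ collapses exactly to \eqref{reach1condhyp} for $\mathcal{D}$ and the proof is complete.
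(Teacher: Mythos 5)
The paper does not actually prove \Cref{reach1hyp}: it is quoted from B\'erczi and Frank \cite{BF2}, and the paper's blanket remark about the gadget of \cite{FKLSzT} is invoked only for its new results, whose conditions are formulated on subsets and bisets of the original vertex set. So your proposal has to stand on its own, and it contains a genuine gap in the sufficiency part. The strategy of verifying \eqref{reach1cond} for the gadget digraph $D=(V\cup U,A)$ and then invoking \Cref{reach1} cannot work, because \eqref{reach1cond} for $D$ is strictly stronger than \eqref{reach1condhyp} for $\mathcal{D}$. A reachability $s$-arborescence in $D$ must span \emph{every} auxiliary vertex $u_e$ reachable from $s$, and $u_e$ has in-degree only $|Z_e|$ in $D$; taking $X'=\{u_e\}$ in \eqref{reach1cond} therefore forces $|Z_e|\ge |S_{P^{\mathcal{D}}_{Z_e}}|$, a requirement that has no counterpart in \eqref{reach1condhyp}. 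Concretely, let $V=\{y,z\}$, let $\mathcal{A}$ consist of two parallel dyperedges $e_1=e_2=(\{y\},z)$, and let $S$ contain two copies of $y$. Then \eqref{reach1condhyp} holds and the desired packing exists (two spanning $y$-hyperarborescences, one per dyperedge), but in $D$ both roots can reach $u_{e_1}$, which has in-degree $1$, so \eqref{reach1cond} fails at $\{u_{e_1}\}$ and no packing of reachability arborescences in $D$ exists at all. This also refutes your claim that ``the reverse untrimming works as well'': untrimming the hypergraph packing yields arborescences in $D$ that miss reachable auxiliary vertices and hence are not reachability arborescences of $D$.

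Because the object you want to pull back from $D$ genuinely fails to exist in such instances, the difficulty you flag at the end is not a matter of normalizing a deficiency-maximal test set by uncrossing; no translation of test sets can succeed when the conclusion of \Cref{reach1} for $D$ is false while that of \Cref{reach1hyp} for $\mathcal{D}$ is true. Your necessity argument is fine. To repair the sufficiency you would need either a genuinely different reduction (for instance, processing the atoms of $\mathcal{D}$ in topological order and applying Edmonds' branching theorem, in the spirit of the derivation of \Cref{reach1} from \Cref{edmondsbranchings} in \cite{HSz4}, which adapts to dyperedges because trimming is decided one atom at a time), or a direct uncrossing proof on the deficiency function of \eqref{reach1condhyp} as in \cite{BF2}.
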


If $\mathcal{D}$ is a digraph, then Theorem \ref{reach1hyp} reduces to Theorem \ref{reach1}.
The same way as Theorem \ref{reach1} implies Theorem \ref{edmondsarborescencesmulti}, Theorem \ref{reach1hyp} implies Theorem \ref{hyperarborescencesmulti}.

\subsection{Matroid-based packing of hyperarborescences}

Theorem  \ref{thmddgnsz} was generalized to dypergraphs in \cite{FKLSzT} as follows. It was obtained from the  graphic version, Theorem  \ref{thmddgnsz},  by a simple gadget.

\begin{thm}[Fortier, Kir\'aly, L\'eonard, Szigeti, Talon \cite{FKLSzT}] \label{kjvkv}
Let $\mathcal{D}=(V,\mathcal{A})$ be a  dypergraph, $S$ a multiset of vertices in $V$, and ${\sf M}=(S,\mathcal{I}_{\sf M})$ a matroid with rank function $r_{\sf M}$. There exists a complete {\sf M}-based packing of hyperarborescences in $\mathcal{D}$  if and only if   \eqref{matcondori1} holds and 
	\begin{eqnarray}
		d^-_{\mathcal{A}}(Z)\geq r_{{\sf M}}(S)-r_{{\sf M}}(S_Z)&& \text{ for every } Z\subseteq V.\label{lfljfulyflu}
	\end{eqnarray}
\end{thm}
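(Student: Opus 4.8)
The plan is to derive Theorem~\ref{kjvkv} from its graphic counterpart, Theorem~\ref{thmddgnsz}, through the dyperedge-subdivision gadget of \cite{FKLSzT}. Given $\mathcal{D}=(V,\mathcal{A})$, I would build a digraph $D'=(V',A')$ by subdividing every dyperedge: for each $e=(Z_e,z_e)\in\mathcal{A}$ add a new vertex $u_e$, an arc $yu_e$ for every tail $y\in Z_e$, and a single arc $u_ez_e$, so that $V'=V\cup\{u_e:e\in\mathcal{A}\}$; the matroid ${\sf M}=(S,r_{\sf M})$ is kept unchanged on $S\subseteq V$. The point of the single out-arc $u_ez_e$ is to force the gadget of $e$ to be traversed by at most one arborescence, mirroring the fact that in a packing each dyperedge is used by at most one hyperarborescence.

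Next I would set up the correspondence between packings. Given a complete ${\sf M}$-based packing of hyperarborescences in $\mathcal{D}$, trimming each dyperedge $e$ to an arc $y_ez_e$ (as witnessed by the hyperarborescence using it) and rerouting it as $y_e\,u_e\,z_e$ yields a packing of arborescences in $D'$ with the same roots; arc-disjointness is preserved because distinct hyperarborescences use distinct dyperedges, hence distinct gadgets. Conversely, in any packing of arborescences in $D'$, at most one arborescence can use each arc $u_ez_e$, so contracting every gadget back to its dyperedge produces a hyperarborescence packing in $\mathcal{D}$. In both directions a vertex $v\in V$ lies in a given (hyper)arborescence if and only if it lies in its image, so the multiset $R_v$ of roots reaching $v$ is preserved and the basis condition at the vertices of $V$ transfers verbatim. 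The auxiliary vertices $u_e$ carry no element of $S$ (so $S_{u_e}=\emptyset$) and are traversed by a single arborescence, hence they must be left outside the basis requirement.

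To match the feasibility conditions I would observe that \eqref{matcondori1} is literally the same statement on both sides (it only involves $v\in V$), and that \eqref{lfljfulyflu} for $\mathcal{D}$ is equivalent to condition \eqref{ddgnszcond} for $D'$ restricted to the sets meeting $V$. The key computation is that for a fixed nonempty $X\subseteq V$ the in-degree $d^-_{A'}(Z')$ is minimized, over all $Z'$ with $Z'\cap V=X$, by choosing the membership of each $u_e$ independently: putting $u_e\in Z'$ contributes $|Z_e\setminus X|$, while leaving it out contributes $1$ exactly when $z_e\in X$. A short case analysis on whether $z_e\in X$ and whether $Z_e\subseteq X$ shows that this minimum equals the number of dyperedges $e$ with $z_e\in X$ and $Z_e\setminus X\neq\emptyset$, which is exactly $d^-_{\mathcal{A}}(X)$. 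Hence $\min\{d^-_{A'}(Z'):Z'\cap V=X\}=d^-_{\mathcal{A}}(X)$, so the graphic inequality $r_{\sf M}(S_X)+d^-_{A'}(Z')\ge r_{\sf M}(S)$ for all such $Z'$ becomes precisely $d^-_{\mathcal{A}}(X)\ge r_{\sf M}(S)-r_{\sf M}(S_X)$, i.e.\ \eqref{lfljfulyflu}.

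The main obstacle is exactly the status of the subdivision vertices $u_e$. Since the only arc leaving $u_e$ is $u_ez_e$, only one arborescence of any packing can use $u_ez_e$; consequently $u_e$ can never be reached by a full basis of roots unless $r_{\sf M}(S)\le 1$. Therefore Theorem~\ref{thmddgnsz} cannot be applied to $D'$ in the naive form that forces a basis at every vertex of $V'$, as that would spuriously add the constraint $|Z_e|\ge r_{\sf M}(S)$ coming from the singleton $Z'=\{u_e\}$. The remedy, which is where the care lies, is to invoke the form of Theorem~\ref{thmddgnsz} in which only a prescribed set of vertices (here $V$) must receive a basis while the auxiliary vertices are free, so that sets $Z'$ with $Z'\cap V=\emptyset$ impose no condition. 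Granting this, applying the specified-set form of Theorem~\ref{thmddgnsz} to $D'$ and translating the resulting packing back through the gadget yields Theorem~\ref{kjvkv}, with both necessity and sufficiency following from the single equivalence established above.
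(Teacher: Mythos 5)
The paper itself does not prove Theorem~\ref{kjvkv}: it is quoted from \cite{FKLSzT} with the remark that it follows from Theorem~\ref{thmddgnsz} ``by a simple gadget'', so your overall strategy (subdivide each dyperedge $e=(Z_e,z_e)$ into $u_e$ with arcs $yu_e$, $y\in Z_e$, and a single arc $u_ez_e$) is the intended one, and your computation showing $\min\{d^-_{A'}(Z'):Z'\cap V=X\}=d^-_{\mathcal{A}}(X)$ is correct. The problem is the step you yourself flag and then simply ``grant''. Theorem~\ref{thmddgnsz}, as stated in the paper (and as proved in \cite{DdGNSz}), requires $R^{\mathcal B}_v$ to be a basis of $\sf M$ at \emph{every} vertex of the digraph; applied to $D'$ it therefore forces the spurious inequality $|Z_e|=d^-_{A'}(\{u_e\})\ge r_{\sf M}(S)$, exactly as you observe. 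Your remedy is to ``invoke the form of Theorem~\ref{thmddgnsz} in which only a prescribed set of vertices must receive a basis''. No such theorem appears in the paper, you do not prove it, and its natural general form is \emph{false}: requiring the cut condition only for sets meeting a terminal set $T$ is the Steiner-type variant of arborescence packing, for which the cut condition is necessary but not sufficient and the decision problem is NP-hard already for the free matroid (cf.\ Theorem~\ref{femlbfmejbfkje} for a related hardness phenomenon). So the statement you are granting cannot be granted as a black box; it could only hold here because of the very special structure of the vertices $u_e$ (no matroid elements, a unique out-arc), and establishing that is precisely the missing content.

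Concretely, what is needed at this point is an argument that exploits that structure, for instance a trimming lemma: if \eqref{lfljfulyflu} holds, then every dyperedge with at least two tails can be replaced by an arc from a suitably chosen tail to its head so that \eqref{lfljfulyflu} is preserved, after which induction reduces to the digraph case and Theorem~\ref{thmddgnsz} applies verbatim (this is the standard route for such hypergraph extensions, going back to the proof of Theorem~\ref{hyperarborescencesmulti}, and is where the uncrossing work lives). Equivalently, one must prove the ``terminal set $=V$'' version of Theorem~\ref{thmddgnsz} for the specific digraphs $D'$ arising from the gadget. As written, your proof reduces the theorem to an unproven statement that is false in general, so the sufficiency direction is not established. (The necessity direction is fine, but is more cleanly obtained directly: for $v\in Z$ the roots in $R^{\mathcal B}_v\setminus S_Z$ correspond to distinct dyperedges entering $Z$, giving $r_{\sf M}(S)=|R^{\mathcal B}_v|\le r_{\sf M}(S_Z)+d^-_{\mathcal A}(Z)$.)
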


If $\mathcal{D}$ is a digraph, then Theorem \ref{kjvkv} reduces to Theorem \ref{thmddgnsz}.
For the free matroid, Theorem \ref{kjvkv} reduces to Theorem \ref{hyperarborescencesmulti}.
\medskip

Theorem \ref{mbpsaori} was extended to dypergraphs in \cite{szigrooted}.

\begin{thm}[Szigeti \cite{szigrooted}] \label{mbpsaorihyp}
Let $\mathcal{D}=(V,\mathcal{A})$ be a  dypergraph, $S$ a multiset of vertices in $V$, and ${\sf M}=(S,r_{\sf M})$ a matroid. There exists an ${\sf M}$-based packing of  spanning hyperarborescences in $\mathcal{D}$  if and only if 
\begin{eqnarray}
r_{{\sf M}}(S_{\cup{\cal P}})+e_\mathcal{A}({\cal P})&\geq &r_{{\sf M}}(S)|\mathcal{P}| \hskip .5truecm \text{ for every subpartition $\mathcal{P}$ of $V$.}\label{mbpsaoricondhyp}
\end{eqnarray}
\end{thm}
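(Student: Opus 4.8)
The plan is to deduce the statement from its graphic counterpart, Theorem~\ref{mbpsaori}, by means of the gadget of~\cite{FKLSzT}, exactly as Theorem~\ref{kjvkv} was obtained from Theorem~\ref{thmddgnsz}. As usual I would treat only the sufficiency, the necessity being a direct counting argument: if a (necessarily size $r_{\sf M}(S)$) packing of spanning hyperarborescences exists, then for a subpartition $\mathcal{P}$ of $V$ each member $X$ is, in every hyperarborescence whose root lies outside $X$, entered by at least one dyperedge of $e_\mathcal{A}(\mathcal{P})$, while $r_{\sf M}(S_{\cup\mathcal{P}})$ bounds the number of roots inside $\cup\mathcal{P}$; summing over the members of $\mathcal{P}$ yields~\eqref{mbpsaoricondhyp}.

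For the sufficiency, from $\mathcal{D}=(V,\mathcal{A})$ I would build the digraph $D=(V\cup U,A)$ where $U=\{u_e:e\in\mathcal{A}\}$ and, for each dyperedge $e=(Z,z)$, $A$ contains the arc $u_e z$ together with an arc $yu_e$ for every tail $y\in Z$; the matroid ${\sf M}$ is kept on $S\subseteq V$, the new vertices carrying no matroid element. Trimming $e$ to the arc $yz$ then corresponds to using the two-arc path through $u_e$, so that an ${\sf M}$-based packing of spanning hyperarborescences in $\mathcal{D}$ matches an ${\sf M}$-based packing of arborescences in $D$ that reach every vertex of $V$ (routing through the used $u_e$) and whose roots form a basis of ${\sf M}$ at each $v\in V$, the gadget vertices serving only as transit points.

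It then remains to check that condition~\eqref{mbpsaoricondhyp} for $\mathcal{D}$ is equivalent to condition~\eqref{mbpsaoricond} for $D$. The key is the identity $e_A(\mathcal{P}')=e_\mathcal{A}(\mathcal{P})$, where $\mathcal{P}'$ is obtained from a subpartition $\mathcal{P}$ of $V$ by enlarging each member $X$ with all $u_e$ whose dyperedge lies entirely inside $X$ (that is, $z_e\in X$ and $Z_e\subseteq X$): a dyperedge entering some member contributes exactly its head-arc $u_e z$ entering $\mathcal{P}'$, whereas a dyperedge internal to a member contributes no entering arc, its gadget vertex having been absorbed. Since $S_{\cup\mathcal{P}'}=S_{\cup\mathcal{P}}$ and $|\mathcal{P}'|=|\mathcal{P}|$, inequality~\eqref{mbpsaoricondhyp} for $\mathcal{P}$ becomes~\eqref{mbpsaoricond} for $\mathcal{P}'$; conversely I would argue that an arbitrary subpartition of $V\cup U$ may be assumed, without loss of generality, to be of this form, a gadget vertex $u_e$ never being worth separating from its head $z_e$ and a singleton gadget member only weakening the inequality. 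Applying Theorem~\ref{mbpsaori} to $D$ and pulling the resulting packing back through the gadget would then produce the desired packing in $\mathcal{D}$.

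The main obstacle I expect is the packing correspondence at the gadget vertices. Because the arborescences in $D$ need not all reach every $u_e$, the object supplied by Theorem~\ref{mbpsaori} is not literally a packing spanning all of $V\cup U$, so one must set up the reduction so that the spanning and basis requirements are genuinely enforced only on $V$, and then verify that each used $u_e$ is covered consistently---entered through a single tail arc and left along $u_e z$ within the \emph{same} arborescence---so that the inverse trimming is well defined. Making this book-keeping precise, together with the ``without loss of generality'' reduction of subpartitions of $V\cup U$ to the augmented subpartitions $\mathcal{P}'$, is where the real work lies; the matroid estimates themselves collapse, via the submodularity, monotonicity and subcardinality of $r_{\sf M}$, to the same routine bounds already used in the graphic setting.
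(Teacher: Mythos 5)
The paper never proves Theorem~\ref{mbpsaorihyp}: it is quoted from \cite{szigrooted}, and inside this paper it is only observed to be the case $\ell=\ell'=r_{\sf M}(S)$ of Theorem~\ref{bboboibohyp}. Judged on its own, your reduction has a genuine gap, and it sits exactly where you say ``the real work lies''. Theorem~\ref{mbpsaori} applied to the gadget digraph $D=(V\cup U,A)$ asks for $r_{\sf M}(S)$ arc-disjoint arborescences each spanning \emph{all} of $V\cup U$, and condition \eqref{mbpsaoricond} for $D$ is strictly stronger than condition \eqref{mbpsaoricondhyp} for $\mathcal{D}$. Your ``without loss of generality'' claim that subpartitions of $V\cup U$ may be assumed to be of the augmented form $\mathcal{P}'$, because a singleton gadget member ``only weakens the inequality'', is false: for $\mathcal{P}=\{\{u_e\}\}$ we have $r_{\sf M}(S_{\cup\mathcal{P}})=0$ and $e_A(\mathcal{P})=|Z_e|$, so \eqref{mbpsaoricond} for $D$ forces $|Z_e|\ge r_{\sf M}(S)$ for every dyperedge $e$ --- a constraint that adds $r_{\sf M}(S)$ to the right-hand side but possibly much less to the left, and that does not follow from \eqref{mbpsaoricondhyp}. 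Concretely, let $V=\{a,b\}$, let $\mathcal{A}$ consist of two parallel dyperedges $(\{a\},b)$, let $S=\{s_1,s_2\}$ both sit at $a$, and let ${\sf M}$ be the free matroid. Then \eqref{mbpsaoricondhyp} holds and the required packing exists (two spanning arborescences, each a single dyperedge), yet the gadget digraph has $4$ arcs and $4$ vertices, so two arc-disjoint spanning arborescences would need $6$ arcs; accordingly \eqref{mbpsaoricond} fails for $\{\{u_{e_1}\}\}$. Theorem~\ref{mbpsaori} therefore simply cannot be invoked on $D$, and the failure already occurs when $\mathcal{D}$ is an ordinary digraph whose arcs the gadget subdivides.

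The necessity argument and the computation $e_A(\mathcal{P}')=e_{\mathcal{A}}(\mathcal{P})$, $S_{\cup\mathcal{P}'}=S_{\cup\mathcal{P}}$, $|\mathcal{P}'|=|\mathcal{P}|$ are fine as far as they go; the defect is that they only verify \eqref{mbpsaoricond} for subpartitions of the special form $\mathcal{P}'$, while the theorem you want to apply needs it for all subpartitions of $V\cup U$. To repair the argument you must avoid imposing the spanning and basis requirements at the gadget vertices altogether: either prove (or quote) an intermediate statement in which the arborescences are only required to span $V$ and the basis condition is only imposed on $V$ --- which is no longer Theorem~\ref{mbpsaori} and needs its own proof --- or, more in the spirit of this paper, deduce Theorem~\ref{mbpsaorihyp} from Theorem~\ref{bboboibohyp} with $\ell=\ell'=r_{\sf M}(S)$, mirroring the way Theorem~\ref{mbpsaori} is derived from Theorem~\ref{bboboibo} in Subsection~\ref{mbpoa}, with the hypergraph-to-graph transfer performed at the level of the $(\ell,\ell')$-limited theorem rather than at the level of the spanning-arborescence theorem.
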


If $\mathcal{D}$ is a digraph, then Theorem \ref{mbpsaorihyp} reduces to Theorem \ref{mbpsaori}.
For the free matroid, Theorem \ref{mbpsaorihyp} reduces to Theorem \ref{kjvkv}. 
\medskip

The following generalizations to dypergraphs follow from  the corresponding graphic versions, by applying the  gadget of \cite{FKLSzT}.

\begin{thm} \label{bboboibohyp}
Let $\mathcal{D}=(V,\mathcal{A})$ be a  dypergraph, $S$ a multiset of vertices in $V$, $\ell, \ell'\in\mathbb{Z}_+,$ and ${\sf M}=(S,r_{\sf M})$ a matroid. There exists an ${\sf M}$-based $(\ell,\ell')$-limited packing of hyperarborescences  in $\mathcal{D}$ if and only if \eqref{ellell} and \eqref{jbuoouou} hold and
\begin{eqnarray} 
	r_{\sf M}(S_{X})+d_{\mathcal{A}}^-(X)	&	\ge	&	r_{\sf M}(S)  \hskip .2truecm \text{ for every subatom } X \text{ of } \mathcal{D},\label{kjbvgchyp}\\
	\sum_{{\sf X}\in\mathcal{P}}(r_{\sf M}(S)-r_{\sf M}(S_{X_W})-d_{\mathcal{A}}^-(X_O)) &	\le	&	\ell'	 
	\hskip 1.15truecm  \forall \text{  OW laminar biset family } {\cal P} \text{ of subatoms}.\hskip .8truecm 	\label{dknjkbduzvhyp}
\end{eqnarray}
\end{thm}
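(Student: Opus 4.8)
The plan is to deduce \Cref{bboboibohyp} from its graphic counterpart \Cref{bboboibo} by means of the gadget of \cite{FKLSzT}. Concretely, from the dypergraph $\mathcal{D}=(V,\mathcal{A})$ I would build a digraph $D=(V\cup U,A)$ by replacing each dyperedge $e=(Z_e,z_e)$ with a fresh vertex $u_e\in U$, an arc $tu_e$ for every tail $t\in Z_e$, and a single arc $u_ez_e$. The matroid ${\sf M}$ and the root multiset $S\subseteq V$ are kept unchanged; in particular $S_{u_e}=\emptyset$ for every $e$. The first step is the packing correspondence: since $u_e$ has the unique out-arc $u_ez_e$, arc-disjointness allows at most one arborescence to traverse $u_e$, and such a traversal $t\to u_e\to z_e$ is precisely a trimming of $e$ to the arc $tz_e$. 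This yields a root- and cardinality-preserving correspondence between ${\sf M}$-based packings of hyperarborescences in $\mathcal{D}$ and ${\sf M}$-based packings of arborescences in $D$ read on $V$, so that the $(\ell,\ell')$-limitation carries over verbatim.

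The second step is the translation of the feasibility conditions. For $X\subseteq V$ set $X^{+}=X\cup\{u_e: Z_e\cup\{z_e\}\subseteq X\}$; a short inspection of the two arc types of the gadget gives the identity $d^-_A(X^{+})=d^-_\mathcal{A}(X)$, together with $S_{X^{+}}=S_X$. Moreover a directed walk of $D$ between two vertices of $V$ projects to a dyperpath of $\mathcal{D}$ and conversely, so the atoms of $D$ restrict on $V$ to the atoms of $\mathcal{D}$ and the lift $X\mapsto X^{+}$ sends subatoms of $\mathcal{D}$ to subatoms of $D$. Using these facts, \eqref{ellell} and \eqref{jbuoouou} are unchanged (the new vertices contribute $r_{\sf M}(S_{u_e})=0$), \eqref{kjbvgchyp} lifts to \eqref{kjbvgc}, and, since the summand $r_{\sf M}(S)-r_{\sf M}(S_{X_W})-d^-(X_O)$ depends only on the $V$-part of the wall and on $d^-_\mathcal{A}(X_O)$, lifting an OW laminar family of subatoms of $\mathcal{D}$ identifies \eqref{dknjkbduzvhyp} with \eqref{dknjkbduzv}. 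Applying \Cref{bboboibo} to $D$ then returns, through the trimming correspondence, the desired packing in $\mathcal{D}$.

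I expect the main obstacle to lie in the auxiliary vertices $U$, for two linked reasons. First, the basis requirement of an ${\sf M}$-based packing is meant only for the original vertices $V$: a gadget vertex $u_e$ should carry exactly the roots routed through $e$, whereas a naive application of \Cref{bboboibo} on $D$, whose requirement formally concerns every vertex, would spuriously force $|Z_e|\ge r_{\sf M}(S)$ at each leaf $u_e$. The delicate, and genuinely technical, point of the construction is therefore to arrange it so that the basis condition is effectively imposed on $V$ alone, so that the constraints \eqref{kjbvgc} and \eqref{dknjkbduzv} read on $D$ are equivalent to their $\mathcal{D}$-versions and not to a strictly stronger system. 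Second, for the reverse implication one must check that every subatom of $D$ and every OW laminar family over $D$ — which may involve vertices of $U$ and need not be of the form $X^{+}$ — is dominated by a configuration over $\mathcal{D}$: the plan is to project such a set $Y$ to $Y\cap V$ and argue, using $S_Y=S_{Y\cap V}$ and the inequality $d^-_A(Y)\ge d^-_\mathcal{A}(Y\cap V)$ (obtained by exhibiting, for each dyperedge entering $Y\cap V$, a distinct gadget arc entering $Y$), that the projected family is again OW laminar over $\mathcal{D}$ with an objective value at least as large, thereby reducing each constraint of $D$ to one of $\mathcal{D}$ and closing the equivalence.
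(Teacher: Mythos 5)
The reduction you outline is indeed the route the paper takes (it derives \Cref{bboboibohyp} from \Cref{bboboibo} via the gadget of \cite{FKLSzT}), but your write-up stops exactly where the work is: the obstacle you flag in your last paragraph is not a technicality to be ``arranged'', it is a genuine failure of the black-box reduction as you have set it up. In the gadget digraph $D=(V\cup U,A)$ every singleton $\{u_e\}$ is a subatom (it is a non-empty subset of its strongly connected component), so \eqref{kjbvgc} applied to $X=\{u_e\}$ reads $r_{\sf M}(S_{\{u_e\}})+d^-_A(u_e)=0+|Z_e|\ge r_{\sf M}(S)$, which fails for every dyperedge with fewer tails than the rank --- already when $\mathcal{D}$ is an ordinary digraph and $r_{\sf M}(S)=2$, the condition fails at every gadget vertex. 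Likewise the family $\{(\{u_e\},\{u_e\}):e\in\mathcal{A}\}$ is OW laminar (its cores are pairwise disjoint, hence never core-intersecting), and it contributes $\sum_e(r_{\sf M}(S)-|Z_e|)$ to the left-hand side of \eqref{dknjkbduzv}, which can exceed any $\ell'$ even when \eqref{dknjkbduzvhyp} holds. On the packing side the same pathology appears: an ${\sf M}$-based packing in $D$ must place $u_e$ in $r_{\sf M}(S)$ arborescences, which is impossible when $|Z_e|<r_{\sf M}(S)$ and in any case has no counterpart in $\mathcal{D}$. Your proposed repair --- projecting a set $Y$ to $Y\cap V$ --- works precisely when $Y\cap V\neq\emptyset$ (there $S_Y=S_{Y\cap V}$ and $d^-_A(Y)\ge d^-_{\mathcal{A}}(Y\cap V)$ do close the argument), but it says nothing about the offending sets $Y\subseteq U$, which project to $\emptyset$.

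So the missing ingredient is a mechanism that makes the auxiliary vertices automatically satisfy, rather than violate, the hypotheses of \Cref{bboboibo}. Two standard ways to achieve this: either augment the gadget so that each $u_e$ inherits the requirement of its head $z_e$ (e.g.\ by attaching $u_e$ to the atom of $z_e$ with sufficiently many parallel arcs from $z_e$ to $u_e$, so that $d^-_A(Y)\ge r_{\sf M}(S)$ for every $Y$ meeting $U$ but not containing the relevant heads, and so that the arborescences through $z_e$ can be extended to cover $u_e$ by a basis), or avoid auxiliary vertices altogether by first invoking a trimming theorem that selects one tail per dyperedge while preserving the in-degree conditions, and only then applying \Cref{bboboibo} to the resulting digraph on $V$. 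Without one of these, the claimed equivalences ``\eqref{kjbvgchyp} lifts to \eqref{kjbvgc}'' and ``\eqref{dknjkbduzvhyp} is identified with \eqref{dknjkbduzv}'' are false as stated, and the proof does not go through.
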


If $\mathcal{D}$ is a digraph, then Theorem \ref{bboboibohyp} reduces to Theorem \ref{bboboibo}.
If $\ell=\ell'=|S|,$ then  Theorem \ref{bboboibohyp} reduces to Theorem \ref{kjvkv}.
If $\ell=\ell'=r_{{\sf M}}(S),$ then  Theorem \ref{bboboibohyp} reduces to Theorem \ref{mbpsaorihyp}.

\begin{thm} \label{vouyfyhyp}
Let $\mathcal{D}=(V,\mathcal{A})$ be a  dypergraph, $S$ a multiset of vertices in $V$, and ${\sf M}=(S,r_{\sf M})$ a matroid. There exists a decomposition of $\mathcal{A}$ into an ${\sf M}$-based packing of hyperarborescences  in $\mathcal{D}$ if and only if \eqref{kjbvgchyp} holds and for every OW laminar biset family ${\cal P}$ of subatoms, 
\begin{eqnarray} \label{ljhfygdtuhyp}
	\sum_{{\sf X}\in\mathcal{P}}(r_{\sf M}(S)-r_{\sf M}(S_{X_W})-d_\mathcal{A}^-(X_O)) 	&	\le	&r_{\sf M}(S)|V|-|\mathcal{A}|. 
\end{eqnarray}
\end{thm}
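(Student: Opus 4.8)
The plan is to derive Theorem \ref{vouyfyhyp} from Theorem \ref{bboboibohyp} by exactly the argument that produces Theorem \ref{vouyfy} from Theorem \ref{bboboibo}; the only genuinely new ingredient is a degree-counting identity valid in the dypergraph setting. I note that the naive route, namely reducing directly to Theorem \ref{vouyfy} through the gadget of \cite{FKLSzT}, is awkward for a \emph{decomposition} statement, because the gadget inflates each dyperedge $(Z,z)$ into several arcs and a decomposition of $\mathcal{A}$ does not correspond to a decomposition of the gadget's arc set; going through the $(\ell,\ell')$-limited Theorem \ref{bboboibohyp} avoids this. The guiding observation is that in any ${\sf M}$-based packing $\mathcal{B}$ of hyperarborescences with root multiset $S^*$, each vertex $v$ lies in exactly $r_{\sf M}(S)$ hyperarborescences (since $R^{\mathcal{B}}_v$ is a basis of ${\sf M}$), of which $|S^*_v|$ are rooted at $v$; every other hyperarborescence containing $v$ reaches $v$, after trimming, through exactly one dyperedge with head $v$. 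As the packing is dyperedge-disjoint, these dyperedges are distinct, so the number of dyperedges of $\mathcal{A}$ with head $v$ that are actually used equals $r_{\sf M}(S)-|S^*_v|$. Summing over $V$ and using $|\mathcal{A}|=\sum_{v\in V}d^-_\mathcal{A}(v)$, the packing uses $r_{\sf M}(S)|V|-|S^*|$ dyperedges in total; hence $\mathcal{B}$ is a decomposition of $\mathcal{A}$ if and only if $|S^*|=r_{\sf M}(S)|V|-|\mathcal{A}|$.

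For the \textbf{necessity}, I would take a decomposition of $\mathcal{A}$ into an ${\sf M}$-based packing with root set $S^*$. By Theorem \ref{kjvkv}, condition \eqref{lfljfulyflu} holds, and applying it to each single vertex yields \eqref{kjbvgchyp}. The counting identity above gives $|S^*|=r_{\sf M}(S)|V|-|\mathcal{A}|$, so Theorem \ref{bboboibohyp} applied with $\ell'=r_{\sf M}(S)|V|-|\mathcal{A}|$ shows that \eqref{dknjkbduzvhyp} holds for this value of $\ell'$, which is precisely \eqref{ljhfygdtuhyp}.

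For the \textbf{sufficiency}, I would assume \eqref{kjbvgchyp} and \eqref{ljhfygdtuhyp} and set $\ell=\ell'=r_{\sf M}(S)|V|-|\mathcal{A}|$. Then \eqref{ellell} is trivial, \eqref{kjbvgchyp} holds by assumption, and \eqref{dknjkbduzvhyp} is exactly \eqref{ljhfygdtuhyp}. Applying \eqref{kjbvgchyp} to each single vertex, which is a subatom, gives $r_{\sf M}(S_v)\ge r_{\sf M}(S)-d^-_\mathcal{A}(v)$, whence $\sum_{v\in V}r_{\sf M}(S_v)\ge r_{\sf M}(S)|V|-|\mathcal{A}|=\ell$, so \eqref{jbuoouou} holds as well. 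Theorem \ref{bboboibohyp} then yields an ${\sf M}$-based packing with root set $S^*$ of size $r_{\sf M}(S)|V|-|\mathcal{A}|$, and by the counting identity this packing is a decomposition of $\mathcal{A}$.

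The main obstacle I anticipate is the degree-counting identity itself: unlike the digraph case, where each non-root vertex of an arborescence has in-degree exactly one, here one must argue at the level of trimmed dyperedges to see that each hyperarborescence reaches $v$ through a single dyperedge with head $v$, and that dyperedge-disjointness of the packing lets these be counted against $d^-_\mathcal{A}(v)$ without overlap. Once this identity is in place, the equivalence between ``$\mathcal{B}$ is a decomposition'' and ``$|S^*|=r_{\sf M}(S)|V|-|\mathcal{A}|$'' is immediate, and both implications reduce to routine bookkeeping with the hypotheses of Theorem \ref{bboboibohyp}, exactly paralleling the proof that Theorem \ref{bboboibo} implies Theorem \ref{vouyfy}.
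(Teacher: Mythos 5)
Your proof is correct, and it takes a more explicit route than the one the paper records. The paper dispatches \Cref{vouyfyhyp} with the blanket remark that the dypergraph statements follow from the corresponding graphic versions by applying the gadget of \cite{FKLSzT}, i.e., it points to \Cref{vouyfy} plus the dyperedge gadget. As you observe, that reduction is delicate for a decomposition statement: the gadget replaces a dyperedge $(Z,z)$ by $|Z|+1$ arcs of which any packing uses at most two, so a decomposition of $\mathcal{A}$ does not correspond to a decomposition of the gadget's arc set, and the quantity $r_{\sf M}(S)|V|-|\mathcal{A}|$ does not transform into its analogue for the gadget digraph. Your derivation instead reproduces, inside the dypergraph setting, the paper's own derivation of \Cref{vouyfy} from \Cref{bboboibo}: invoke \Cref{bboboibohyp} (which is legitimately obtained via the gadget), set $\ell=\ell'=r_{\sf M}(S)|V|-|\mathcal{A}|$, and use the counting identity that an ${\sf M}$-based packing of hyperarborescences with root multiset $S^*$ uses exactly $r_{\sf M}(S)|V|-|S^*|$ dyperedges; your justification of this identity (each hyperarborescence trims to an arborescence and hence contains exactly one dyperedge headed at each non-root vertex it reaches, and dyperedge-disjointness lets these be counted against $d^-_{\mathcal{A}}(v)$) is the only genuinely hypergraph-specific step and is sound. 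Two small corrections: since \eqref{kjbvgchyp} quantifies over all subatoms, in the necessity you should apply \eqref{lfljfulyflu} to every subatom, not only to single vertices (singletons suffice only for deriving \eqref{jbuoouou} in the sufficiency part); and \Cref{kjvkv} as stated concerns complete packings, so the necessity of \eqref{lfljfulyflu} for an arbitrary ${\sf M}$-based packing should be checked directly (which is easy and consistent with the paper's announced policy of omitting necessity arguments).
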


If $\mathcal{D}$ is a digraph, then Theorem \ref{vouyfyhyp} reduces to Theorem \ref{vouyfy}.

\subsection{Matroid-reachability-based packing of hyperarborescences}

Theorem \ref{thmCsaba} was generalized to dypergraphs in \cite{FKLSzT} as follows. 

 \begin{thm}[Fortier, Kir\'aly, L\'eonard, Szigeti, Talon \cite{FKLSzT}] \label{kjvkv2}
Let $\mathcal{D}=(V,\mathcal{A})$ be a  dypergraph, $S$ a multiset of vertices in $V$, and ${\sf M}=(S,\mathcal{I}_{\sf M})$ a matroid with rank function $r_{\sf M}$.There exists a complete {\sf M}-reachability-based packing of hyperarborescences in $\mathcal{D}$  if and only if   \eqref{matcondori1} holds and 
\begin{eqnarray}
	d^-_{\mathcal{A}}(Z)\geq r_{{\sf M}}(S_{P_Z})-r_{{\sf M}}(S_Z)&& \text{ for every } Z\subseteq V.\label{csabicondhyp}
\end{eqnarray}
\end{thm}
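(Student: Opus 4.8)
The plan is to derive Theorem~\ref{kjvkv2} from its graphic counterpart, Theorem~\ref{thmCsaba}, via the gadget of \cite{FKLSzT}, in exactly the way Theorem~\ref{kjvkv} was obtained from Theorem~\ref{thmddgnsz}. First I would build an auxiliary digraph $D=(V\cup W,A)$ from $\mathcal{D}=(V,\mathcal{A})$ by introducing, for each dyperedge $e=(Z_e,z_e)\in\mathcal{A}$, a new \emph{gadget vertex} $w_e$ together with the arcs $yw_e$ for every tail $y\in Z_e$ and a single arc $w_ez_e$ into the head; the matroid $\sf M$ stays on $S\subseteq V$. The point of the gadget is that routing a path through $w_e$ via $yw_e$ followed by $w_ez_e$ is precisely the trimming of $e$ to the arc $yz_e$, and conversely. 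Since the arc $w_ez_e$ is unique, at most one arborescence of an arc-disjoint packing can pass through $w_e$ to $z_e$, which is exactly the requirement that in a packing each dyperedge is used by at most one hyperarborescence.

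Next I would set up the two correspondences needed to transport Theorem~\ref{thmCsaba} across the gadget. For reachability, a short check gives that for every $v\in V$ an element of $S$ reaches $v$ by a dyperpath in $\mathcal{D}$ if and only if it reaches $v$ by a directed path in $D$, so $S_{P^{\mathcal{D}}_v}=S_{P^{D}_v}$; the gadget vertices carry no element of $S$. For packings, trimming turns a complete $\sf M$-reachability-based packing of hyperarborescences in $\mathcal{D}$ into a complete $\sf M$-reachability-based packing of arborescences in $D$ and back, with the same root multiset $R^{\mathcal B}_v$ at every $v\in V$; hence the condition that $R^{\mathcal B}_v$ be a basis of $S_{P_v}$ is preserved in both directions, and condition \eqref{matcondori1} is literally the same for $\mathcal{D}$ and $D$. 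This already yields the \textbf{necessity} of \eqref{matcondori1} and \eqref{csabicondhyp}: a complete packing in $\mathcal{D}$ lifts to one in $D$, and Theorem~\ref{thmCsaba} applied to $D$ forces \eqref{csabicond} there, which I would read off on the closures described below.

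The crux is matching the cut conditions. For a set $X\subseteq V$ I would work with its \emph{closure} $\hat X=X\cup\{w_e:\ Z_e\subseteq X\}$ in $D$. A direct count shows that an arc of $A$ enters $\hat X$ if and only if it has the form $w_ez_e$ with $z_e\in X$ and $Z_e\not\subseteq X$, i.e. exactly when $e$ enters $X$ in $\mathcal{D}$; thus $d^-_A(\hat X)=d^-_{\mathcal{A}}(X)$, while $S_{\hat X}=S_X$ and (as the added gadget vertices do not reach any new element of $S$) $S_{P^D_{\hat X}}=S_{P^{\mathcal D}_X}$. Consequently \eqref{csabicond} on the sets $\hat X$ is precisely \eqref{csabicondhyp}. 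The remaining and most delicate point, which I expect to be the main obstacle, is the reverse passage at the level of the conditions: assuming \eqref{csabicondhyp} for all $X\subseteq V$, I must verify \eqref{csabicond} for \emph{every} $Z\subseteq V\cup W$, not only for closures. The plan here is an uncrossing/normalization argument showing that it suffices to test the inequality on closures: starting from an arbitrary $Z$ with $X=Z\cap V$, adding a gadget vertex $w_e$ with $Z_e\subseteq X$ leaves $r_{\sf M}(S_{P_Z})-r_{\sf M}(S_Z)$ unchanged while it cannot increase $d^-_A(Z)$, and deleting a gadget vertex whose tail set is not contained in $X$ can only lower the left-hand side by at least as much as the right-hand side; iterating these moves reduces any potential violator to the closure $\widehat{Z\cap V}$, for which the inequality is already known. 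Once \eqref{csabicond} holds for all $Z\subseteq V\cup W$, Theorem~\ref{thmCsaba} produces a complete $\sf M$-reachability-based packing of arborescences in $D$, and trimming yields the desired packing in $\mathcal{D}$, completing the \textbf{sufficiency} and hence the proof.
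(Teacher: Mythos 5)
First, a remark on what you are being compared against: the paper does not prove Theorem~\ref{kjvkv2} at all -- it is quoted from \cite{FKLSzT} with only the one-line comment that the dypergraph statements ``follow from the corresponding graphic versions, by applying the gadget of \cite{FKLSzT}''. So your attempt has to stand on its own, and it does not: the reduction you describe breaks precisely at the step you yourself flag as the delicate one. The obstruction is that the {\sf M}-reachability-based condition in the auxiliary digraph $D=(V\cup W,A)$ is imposed, by the definition under which Theorem~\ref{thmCsaba} is stated, at \emph{every} vertex of $D$, including the gadget vertices $w_e$, and at those vertices it is genuinely stronger than anything implied by \eqref{matcondori1} and \eqref{csabicondhyp}. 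Concretely, take $V=\{a,b,c,d\}$, $S=\{a,b\}$ with the free matroid, and dyperedges $(\{a\},c)$, $(\{b\},c)$ and two copies of $(\{c\},d)$. Conditions \eqref{matcondori1} and \eqref{csabicondhyp} hold and a complete {\sf M}-reachability-based packing of hyperarborescences exists (trim to $\{ac,cd\}$ and $\{bc,cd\}$). In $D$, however, the gadget vertex $w$ of one copy of $(\{c\},d)$ has $d^-_A(\{w\})=1$ while $r_{\sf M}(S_{P^D_w})-r_{\sf M}(S_{\{w\}})=2$, so \eqref{csabicond} fails for $Z=\{w\}$ and, by the necessity part of Theorem~\ref{thmCsaba}, $D$ has \emph{no} complete {\sf M}-reachability-based packing of arborescences. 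This kills both halves of your argument at once: the claimed two-way correspondence between complete packings in $\mathcal{D}$ and in $D$ is false (the lift of the packing above contains $w$ in only one arborescence, while the basis condition at $w$ demands two), and the proposed normalization to closures is false (deleting $w$ from the violator $\{w\}$ drops the right-hand side by $2$ but the left-hand side only by $1$, so the violation does not survive). The same phenomenon occurs for genuine hyperedges whenever $|Z_e|<r_{\sf M}(S_{P^{\mathcal{D}}_{Z_e}})$, so it is not an artifact of single-tail dyperedges.

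What is salvageable: the closure computation $d^-_A(\hat X)=d^-_{\mathcal{A}}(X)$ and the identification of reachability across the gadget are correct, and the necessity of \eqref{matcondori1} and \eqref{csabicondhyp} is easy to prove directly in $\mathcal{D}$ by counting dyperedges entering $Z$, without passing through $D$. For sufficiency, however, you need an additional idea that is absent from your write-up: either a strengthening of Theorem~\ref{thmCsaba} in which the basis condition is required only at a prescribed subset of the vertices (here the original vertices $V$, with the $w_e$'s exempt), or a modified gadget together with an argument that the arborescences can always be extended through the $w_e$'s so as to meet the extra basis requirements there. Supplying one of these is the actual content of the reduction in \cite{FKLSzT}, and until you do, the proof is incomplete.
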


If $\mathcal{D}$ is a digraph, then Theorem \ref{kjvkv2} reduces to Theorem \ref{thmCsaba}.
If $r_{\sf M}(S_{P_v})=r_{\sf M}(S)$ for all $v\in V,$ then  Theorem \ref{kjvkv2} reduces to Theorem \ref{kjvkv}.
\medskip

The following generalizations to dypergraphs follow from  the corresponding graphic versions, by applying the  gadget of \cite{FKLSzT}.

\begin{thm} \label{bboboibohyp2}
Let $\mathcal{D}=(V,\mathcal{A})$ be a  dypergraph, $S$ a multiset of vertices in $V$, $\ell, \ell'\in\mathbb{Z}_+$, and ${\sf M}=(S,r_{\sf M})$ a matroid.  There exists an ${\sf M}$-reachability-based $(\ell,\ell')$-limited packing of  hyperarborescences in $\mathcal{D}$ if and only if \eqref{ellell} and \eqref{jbuoouou}   hold and 
\begin{eqnarray} 
		r_{{\sf M}}(S_{P_{X_I}})-r_{{\sf M}}(S_{X_O}) 	&	\le	&	d_\mathcal{A}^-({X_O}) \hskip .44truecm\text{ for every  biset {\sf X} on $V$,}\label{vvcvclj}  \\
	\sum_{{\sf X}\in\mathcal{P}}(r_{\sf M}(S_{P_{X_I}})-r_{\sf M}(S_{X_W})-d_\mathcal{A}^-(X_O))& \le	&\ell'		\hskip .4truecm  \text{ for every OW laminar biset family } {\cal P} \text{ of }  \mathcal{X}.\hskip .8truecm	
\end{eqnarray}
\end{thm}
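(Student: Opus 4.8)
The plan is to derive Theorem~\ref{bboboibohyp2} from its graphic counterpart, Theorem~\ref{bboboiboreach}, by means of the gadget of \cite{FKLSzT}. Given the dypergraph $\mathcal{D}=(V,\mathcal{A})$, I would build a digraph $D=(V\cup U,A)$ as follows: for every dyperedge $e=(Z,z)\in\mathcal{A}$ introduce a new vertex $u_e$, add an arc $tu_e$ for each tail $t\in Z$, and add a single arc $u_ez$ to the head; set $U=\{u_e:e\in\mathcal{A}\}$ and keep ${\sf M}$ on $S\subseteq V$. Trimming a dyperedge $e$ to an arc $tz$ corresponds exactly to selecting, inside an arborescence of $D$, the unique in-arc $tu_e$ of $u_e$ together with the forced arc $u_ez$; this is the content of the gadget of \cite{FKLSzT}, and it yields a bijection between packings of hyperarborescences in $\mathcal{D}$ and packings of arborescences in $D$ preserving both the number of members and their root multisets. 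Moreover a path of $D$ restricted to $V$ is a dyperpath of $\mathcal{D}$ and conversely, so reachability among vertices of $V$ is preserved, i.e. $P^{D}_v\cap V=P^{\mathcal{D}}_v$ and hence $S_{P^{D}_v}=S_{P^{\mathcal{D}}_v}$ for every $v\in V$. Consequently a packing is ${\sf M}$-reachability-based in $\mathcal{D}$ if and only if the corresponding packing is ${\sf M}$-reachability-based in $D$; the condition imposed at a gadget vertex $u_e$ is exactly the one enforced by the construction of \cite{FKLSzT}, since $u_e\notin S$ and $P^{D}_{u_e}\cap V=P^{\mathcal{D}}_{Z}$.

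With this correspondence in hand it suffices to match the hypotheses of the two theorems on the instances $(\mathcal{D},S,\ell,\ell',{\sf M})$ and $(D,S,\ell,\ell',{\sf M})$. Conditions \eqref{ellell} and \eqref{jbuoouou} involve only $\ell,\ell'$ and the values $r_{\sf M}(S_v)$ for $v\in V$, so they are literally the same in both instances. For the in-degree terms I would use the identity furnished by the gadget: lifting a biset ${\sf X}$ on $V$ to the biset $\hat{\sf X}$ on $V\cup U$ with $\hat X_I=X_I$ and $\hat X_O=X_O\cup\{u_e:Z_e\subseteq X_O\}$, one checks that no arc of $A$ enters the $U$-part of $\hat X_O$ and that the arcs of $A$ entering $\hat X_O$ are precisely the arcs $u_ez$ with $z\in X_O$ and $Z_e\not\subseteq X_O$; hence $d^-_{A}(\hat X_O)=d^-_{\mathcal{A}}(X_O)$, which is exactly the set in-degree used in \eqref{vvcvclj}. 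Since $X_I$ lies in a single atom one has $P^{\mathcal{D}}_{X_I}=P^{\mathcal{D}}_{C_{\sf X}}$, so the left-hand side $r_{\sf M}(S_{P_{X_I}})-r_{\sf M}(S_{X_O})$ of \eqref{vvcvclj} equals $\hat p(\hat X_O)$ for the petal function of $D$ on the petal $\hat X_O$. Thus \eqref{vvcvclj} for $\mathcal{D}$ is equivalent to the set form \eqref{GYcond2} of the existence condition for $D$, which by Theorem~\ref{thmGY} is equivalent to \eqref{GYcond}.

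The remaining, and most delicate, point is the translation of the covering-type inequality over OW laminar families. Here I would show that the map ${\sf X}\mapsto\hat{\sf X}$, extended to generalized petal bisets, is a bijection between the families $\mathcal{X}$ of $\mathcal{D}$ and of $D$ that preserves cores and reachability and keeps the summand $r_{\sf M}(S_{P_{X_I}})-r_{\sf M}(S_{X_W})-d^-(X_O)$ unchanged (using $d^-_{A}(\hat X_O)=d^-_{\mathcal{A}}(X_O)$, $S_{\hat X_W}=S_{X_W}$ since $U\cap S=\emptyset$, and $P^{D}_{\hat X_I}\cap V=P^{\mathcal{D}}_{X_I}$), and that it preserves OW-laminarity in both directions: two lifts are core-intersecting in $D$ exactly when the originals are, because $\hat X_I=X_I$, and an inclusion $X^1_O\subseteq X^2_W$ lifts to $\hat X^1_O\subseteq\hat X^2_W$ because any $u_e$ with $Z_e\subseteq X^1_O$ satisfies $Z_e\subseteq X^2_O$ and hence lands in $\hat X^2_W$. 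Granting this, the right-hand inequality of Theorem~\ref{bboboibohyp2} is equivalent to \eqref{dknjkbduzvreach} for $D$, and applying Theorem~\ref{bboboiboreach} to $(D,S,\ell,\ell',{\sf M})$ and transporting the packing back through the gadget finishes the sufficiency, necessity following by reading the equivalences in reverse. The hard part I anticipate is exactly this structural correspondence of biset families: one must check that the atom/core and reachability structure of $D$ is faithfully controlled by $\mathcal{D}$, so that petals of $D$ do not leak through gadget vertices into configurations without a hypergraph meaning; the packing bijection and the numerical identities are then routine and can be quoted from \cite{FKLSzT}.
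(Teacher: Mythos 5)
Your overall strategy -- reduce to \Cref{bboboiboreach} via the dyperedge gadget of \cite{FKLSzT} -- is exactly what the paper does (the paper gives no more detail than ``apply the gadget''), but the way you instantiate the gadget breaks down, and the point where it breaks is precisely the point you flag at the end as ``the hard part'' and then do not carry out. With one arc $tu_e$ per tail and one arc $u_ez$, the new vertex $u_e$ is itself a vertex of $D$ to which the matroid-reachability condition applies: in any ${\sf M}$-reachability-based packing in $D$, the vertex $u_e$ must lie in $r_{\sf M}(S_{P^D_{u_e}})=r_{\sf M}(S_{P^{\mathcal{D}}_{Z_e}})$ arc-disjoint arborescences, each using a distinct in-arc of $u_e$, of which there are only $|Z_e|$. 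Equivalently, $\{u_e\}$ is a petal of $D$ (it is its own atom whenever no dyperpath returns from $z_e$ to a tail of $e$) with $\hat p(\{u_e\})=r_{\sf M}(S_{P_{Z_e}})$ and $d^-_A(\{u_e\})=|Z_e|$, so \eqref{GYcond} for $D$ forces $|Z_e|\ge r_{\sf M}(S_{P_{Z_e}})$ -- a condition with no counterpart in \eqref{vvcvclj}. A concrete instance: $V=\{a,b,c\}$, $S=\{a,b\}$, ${\sf M}$ the free matroid, dyperedges $(\{a\},b),(\{b\},a),(\{a\},c),(\{b\},c)$. Here \eqref{ellell}, \eqref{jbuoouou}, \eqref{vvcvclj} and the family condition all hold and the required packing exists in $\mathcal{D}$, yet in your $D$ the petal $\{u_e\}$ for $e=(\{a\},c)$ has $\hat p=2>1=d^-$, so \eqref{GYcond} fails and \Cref{bboboiboreach} reports no packing. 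This also shows your claimed ``bijection between packings of hyperarborescences in $\mathcal{D}$ and packings of arborescences in $D$'' cannot preserve the reachability-based property: the lift of the packing above puts $u_e$ in only one arborescence.

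The repair is to take the gadget with multiplicities, as in \cite{FKLSzT}: put sufficiently many (say $r_{\sf M}(S)$) parallel copies of each arc $tu_e$, keeping a single arc $u_ez$. Then every set containing some $u_e$ but missing a tail of $e$ has in-degree at least $r_{\sf M}(S)\ge\hat p$, so the only bisets of $D$ that can contribute a positive deficiency are the ``closures'' $\hat{\sf X}$ with $\hat X_O=X_O\cup\{u_e:Z_e\subseteq X_O\}$, for which your identities $d^-_A(\hat X_O)=d^-_{\mathcal{A}}(X_O)$, $S_{\hat X_W}=S_{X_W}$, $P^D_{\hat X_I}\cap V=P^{\mathcal{D}}_{X_I}$ are correct; the projection back to $\mathcal{D}$ still uses each dyperedge at most once because $u_e$ has a unique out-arc, and the superfluous arborescences through $u_e$ are simply truncated at $u_e$. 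Even with this fix, the step you defer -- showing that every OW laminar family in $D$ with positive summands can be replaced by one consisting of closures without decreasing the left-hand side of \eqref{dknjkbduzvreach}, and that laminarity and disjointness of the inner sets survive the replacement -- is the actual content of the reduction and must be written out; as it stands the proposal asserts rather than proves it.
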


If $\mathcal{D}$ is a digraph, then Theorem \ref{bboboibohyp2} reduces to Theorem \ref{bboboiboreach}.
If $r_{\sf M}(S_{P_v})=r_{\sf M}(S)$ for all $v\in V,$ then  Theorem \ref{bboboibohyp2} reduces to Theorem \ref{bboboibohyp}.
If $\ell=\ell'=|S|,$ then  Theorem \ref{bboboibohyp2} reduces to Theorem \ref{kjvkv2}.

\begin{thm} \label{vouyfyhyp2}
Let $\mathcal{D}=(V,\mathcal{A})$ be a  dypergraph, $S$ a multiset of vertices in $V$, and ${\sf M}=(S,r_{\sf M})$ a matroid. There exists a decomposition of $\mathcal{A}$ into an ${\sf M}$-reachability-based packing of hyperarborescences  in $\mathcal{D}$ if and only if \eqref{vvcvclj} holds and for every OW laminar biset family ${\cal P}$ of $\mathcal{X},$ 
\begin{eqnarray} \label{ljhfygdtu2hyp}
\sum_{{\sf X}\in\mathcal{P}}(r_{\sf M}(S_{P_{X_I}})-r_{\sf M}(S_{X_W})-d_\mathcal{A}^-(X_O))& \le	&r_{\sf M}(S)|V|-|\mathcal{A}|.	
\end{eqnarray}
\end{thm}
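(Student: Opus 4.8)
The plan is to mirror the derivation of the graphic decomposition result Theorem~\ref{vouyfy2} from Theorem~\ref{bboboiboreach}, reducing Theorem~\ref{vouyfyhyp2} to the hypergraph $(\ell,\ell')$-limited result Theorem~\ref{bboboibohyp2} (which itself comes from Theorem~\ref{bboboiboreach} via the gadget of \cite{FKLSzT}). The engine of the reduction is a counting identity for any decomposition of $\mathcal{A}$ into an ${\sf M}$-reachability-based packing of hyperarborescences with root set $S^*$: trimming each $s$-hyperarborescence on its reached set $U$ to an $s$-arborescence shows that it carries exactly $|U|-1$ dyperedges, so $|\mathcal{A}|=\sum_B(|U_B|-1)=\big(\sum_{v\in V}|R^{\mathcal{B}}_v|\big)-|S^*|$. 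As the packing is reachability-based, $|R^{\mathcal{B}}_v|=r_{\sf M}(S_{P^{\mathcal{D}}_v})$ for every $v$, so the number of roots is \emph{forced}: $|S^*|=\sum_{v\in V}r_{\sf M}(S_{P^{\mathcal{D}}_v})-|\mathcal{A}|$. This value will serve as the common choice of $\ell$ and $\ell'$.

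For the necessity direction I would start from such a decomposition. Theorem~\ref{kjvkv2} gives condition \eqref{vvcvclj}. Viewing $S^*$ as an $(\ell',\ell')$-limited packing with $\ell'=|S^*|$ and applying the necessity part of Theorem~\ref{bboboibohyp2} yields, for every OW-laminar biset family $\mathcal{P}$ of $\mathcal{X}$, the bound $\sum_{{\sf X}\in\mathcal{P}}(r_{\sf M}(S_{P_{X_I}})-r_{\sf M}(S_{X_W})-d_{\mathcal{A}}^-(X_O))\le|S^*|$. Since $|S^*|=\sum_{v}r_{\sf M}(S_{P^{\mathcal{D}}_v})-|\mathcal{A}|\le r_{\sf M}(S)|V|-|\mathcal{A}|$ by monotonicity of $r_{\sf M}$, the printed inequality \eqref{ljhfygdtu2hyp} follows a fortiori. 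Thus necessity goes through with the right-hand side exactly as stated.

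For the sufficiency direction I would set $\ell=\ell'=\sum_{v\in V}r_{\sf M}(S_{P^{\mathcal{D}}_v})-|\mathcal{A}|$ and verify the hypotheses of Theorem~\ref{bboboibohyp2}. Here \eqref{ellell} is immediate and \eqref{vvcvclj} is assumed; \eqref{jbuoouou} follows by applying \eqref{vvcvclj} to the bisets $(\{v\},\{v\})$ and summing over $v$, using that each dyperedge is entered by exactly the singleton of its head so that $\sum_v d_{\mathcal{A}}^-(v)=|\mathcal{A}|$. Granting the remaining (OW-laminar upper-bound) hypothesis of Theorem~\ref{bboboibohyp2} with this $\ell'$, the theorem produces an ${\sf M}$-reachability-based packing using exactly $\ell'=\sum_v r_{\sf M}(S_{P^{\mathcal{D}}_v})-|\mathcal{A}|$ roots; substituting this count back into the identity above shows it uses $\sum_v r_{\sf M}(S_{P^{\mathcal{D}}_v})-|S^*|=|\mathcal{A}|$ dyperedges, i.e.\ every dyperedge, so it is a decomposition.

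The main obstacle is precisely the OW-laminar hypothesis needed at this last step: Theorem~\ref{bboboibohyp2} requires the sum $\sum_{{\sf X}\in\mathcal{P}}(r_{\sf M}(S_{P_{X_I}})-r_{\sf M}(S_{X_W})-d_{\mathcal{A}}^-(X_O))$ to be bounded by $\ell'=\sum_v r_{\sf M}(S_{P^{\mathcal{D}}_v})-|\mathcal{A}|$, whereas the hypothesis \eqref{ljhfygdtu2hyp} as worded only bounds it by the larger quantity $r_{\sf M}(S)|V|-|\mathcal{A}|$. The two bounds agree exactly when $r_{\sf M}(S_{P^{\mathcal{D}}_v})=r_{\sf M}(S)$ for all $v$ — the ${\sf M}$-based regime of Theorem~\ref{vouyfyhyp}, whose right-hand side $r_{\sf M}(S)|V|-|\mathcal{A}|$ this one matches — but they differ in general, and I see no way to recover the sharper bound from the weaker printed one. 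I therefore expect the honest completion of sufficiency to require the right-hand side $\sum_{v\in V}r_{\sf M}(S_{P^{\mathcal{D}}_v})-|\mathcal{A}|$, exactly as in the graphic statement Theorem~\ref{vouyfy2}; with that value in place the reduction to Theorem~\ref{bboboibohyp2} closes cleanly.
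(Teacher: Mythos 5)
Your proposal is correct, and in substance it is the proof the paper intends: the paper disposes of \Cref{vouyfyhyp2} with the one-line remark that the hypergraph statements follow ``from the corresponding graphic versions, by applying the gadget of \cite{FKLSzT}'', i.e.\ from \Cref{vouyfy2}, whereas you transplant the paper's derivation of \Cref{vouyfy2} from \Cref{bboboiboreach} to the hypergraph level, reducing to \Cref{bboboibohyp2} with $\ell=\ell'=\sum_{v\in V}r_{\sf M}(S_{P^{\mathcal D}_v})-|\mathcal{A}|$. The two routes are equivalent, since \Cref{bboboibohyp2} is itself the gadget-lift of \Cref{bboboiboreach}, and yours is arguably cleaner because it avoids tracking the decomposition property through the gadget; your counting identity is sound, as trimming is a bijection between the dyperedges of an $s$-hyperarborescence and the arcs of the resulting $s$-arborescence on its vertex set $U$, so each hyperarborescence carries exactly $|U|-1$ dyperedges and $|\mathcal{A}(\mathcal{B})|=\sum_{v\in V}|R^{\mathcal{B}}_v|-|S^*|$. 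One small slip: \Cref{kjvkv2} characterizes \emph{complete} packings and yields the set condition \eqref{csabicondhyp}, not the biset condition \eqref{vvcvclj}; since the packing underlying a decomposition need not be complete, \eqref{vvcvclj} should instead be read off from the necessity part of \Cref{bboboibohyp2} with $\ell=\ell'=|S^*|$, which you invoke anyway for the family bound.

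Your diagnosis of the right-hand side is also correct: as printed, $r_{\sf M}(S)|V|-|\mathcal{A}|$ makes the sufficiency false, and \eqref{ljhfygdtu2hyp} should read $\sum_{v\in V}r_{\sf M}(S_{P_v})-|\mathcal{A}|$, in line with \eqref{ljhfygdtu2}. The paper's own remarks corroborate this: the claim that \Cref{vouyfyhyp2} reduces to \Cref{vouyfy2} for digraphs only holds with the corrected bound, while the reduction to \Cref{vouyfyhyp} under $r_{\sf M}(S_{P_v})=r_{\sf M}(S)$ survives the correction since then $\sum_{v\in V}r_{\sf M}(S_{P_v})=r_{\sf M}(S)|V|$. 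A concrete witness that the printed version fails: let $\mathcal{D}$ be the digraph on $V=\{u_1,u_2,w\}$ with the two arcs $u_1u_2$ and $u_2u_1$, let $S=\{s\}$ with $s$ placed at $u_1$, and let ${\sf M}$ be the free matroid. Then \eqref{vvcvclj} holds, and every OW laminar family of $\mathcal{X}$ has sum at most $1=r_{\sf M}(S)|V|-|\mathcal{A}|$ (attained by the single biset $(\{u_1,u_2\},\{u_1\})$, whose contribution is $1-0-0$), so the printed hypotheses are met; yet every ${\sf M}$-reachability-based packing consists of a single $s$-arborescence on $\{u_1,u_2\}$ using one arc, so no decomposition of the two arcs exists --- consistent with the corrected bound $\sum_{v\in V}r_{\sf M}(S_{P_v})-|\mathcal{A}|=2-2=0$, which that family violates. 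With this emendation your reduction closes exactly as in the graphic case.
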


If $\mathcal{D}$ is a digraph, then Theorem \ref{vouyfyhyp2} reduces to Theorem \ref{vouyfy2}.
If $r_{\sf M}(S_{P_v})=r_{\sf M}(S)$ for all $v\in V,$ then  Theorem \ref{vouyfyhyp2} reduces to Theorem \ref{vouyfyhyp}.

\end{document}